\newcommand{\norm}[1]{\left\lVert#1\right\rVert}
\newtheorem{thm}{Theorem}[section]
\newtheorem{hyp}[thm]{Hypothesis}
\newtheorem{rem}[thm]{Remark}
\newtheorem{lem}[thm]{Lemma}
\newtheorem{prop}[thm]{Proposition}
\newtheorem{cor}[thm]{Corollary}
\newcommand{\divergence}{\mathop{\rm div}\nolimits}
\begin{document}

\title{Transport Maps for $\beta$-Matrix Models in the Multi-Cut Regime}
\date{}
\author{Florent Bekerman \thanks{\noindent Department  of Mathematics, Massachusetts Institute of Technology. Email: bekerman@mit.edu}}
\maketitle
\begin{abstract}
\noindent We use the transport  methods developped in \cite{BFG} to obtain universality results for local statistics of eigenvalues in the bulk and at the edge for $\beta$-matrix models in the multi-cut regime.  We  construct an approximate transport map inbetween two probability measures from the fixed filling fraction model discussed in \cite{BGK} and deduce from it universality in  the initial model.
\end{abstract}

\section{Introduction}
\noindent The goal of this paper is to obtain  universality results for  local statistics of the eigenvalues for $\beta$-matrix models. The analysis of the local fluctuations of the eigenvalues was  first done for the GUE and after the pioneer  work of Gaudin, Dyson and Mehta  the sine kernel law was exhibited  (see \cite{Meh}). Universality was then  shown for  classical values of  $ \beta $  ($ \beta \in \{ 1,2,4 \}$)  and smooth potentials through the study of orthogonal polynomials (See the work of L. Pastur and M. Shcherbina \cite{PSI} \cite{PSII}, and P. Deift et al. \cite{DGa},\cite{DGb} ). \\
\noindent  For non classical values of $ \beta $ and unless the potential is quadratic, there is however no known matrix representation behind the model and universality results cannot be obtained through orthogonal polynomial methods.  For a quadratic potential, the log-gases can be viewed as the eigenvalues of tridiagonal matrices (see \cite{DE}) and the local behaviour of the eigenvalues in the bulk and at the edge have been made explicit thanks to the work of  B. Vir\'{a}g, B. Valk\'{o}, J. Ramirez and B. Rider \cite{VV}, \cite{RRV}.\\
\noindent Recently, new techniques have been developed to study universality of the fluctuations. Thus, P. Bourgade, L. Erdös and H.T. Yau use dynamical methods and Dirichlet form estimates in \cite{BEYI},\cite{BEYIII}  to obtain  the averaged energy universality of  the correlations functions and fixed  gap universality in the bulk (for $\beta > 0 $), as well as universality at the edge ($\beta \geq  1 $, see \cite{BEYII}) for smooth one-cut potentials. In the paper \cite{ShII}, M. Shcherbina uses change of variables to obtain  the averaged energy universality of  the correlation functions in both the one-cut case and multi-cut cases. The fluctuations of the linear statistics of the eigenvalues in the multi-cut regime where  studied in \cite{BGII} and \cite{ShI}, and rigidity in the multi-cut regime was recently obtained in \cite{YLI}. In the paper \cite{BFG},    A.Figalli,  A.Guionnet and the author construct  approximate transport maps with an accurate dependence in the dimension.  The dependence in $N$  allows  to compare the local fluctuation of the eigenvalues under two different potentials. The potentials do not need to be analytic, but an important  hypothesis made in this previous article was the connectedness of the support of the limit  of the spectral measure . Here, we  assume that the potentials are analytic but remove the one-cut assumption and use the same methods to construct approximate transport maps in the case where the filling fractions of each cut is fixed. As a result, we  obtain  universality of  fixed eigenvalue gaps at the edge and in the bulk . The plan of this paper is as follows: In the first section we intoduce some notations and state our main results.  We   reintroduce in section 2 a more general model discussed in \cite{BGK} of $\beta$ log-gases with Coulomb interaction and  construct  an approximate transport map between two measures from this model when the number of particles in each cut is fixed. We will see how  this approximate transport  can lead to  universality results   in the fixed filling fractions case, and conclude for the initial model in Section 4. The main results are Theorems \ref{theoreme},  \ref{theoremedge} and      \ref{theoremedge2}. \\

\noindent  We consider the general $\beta$-matrix model.
\noindent For a subset $\mathcal{A}$ of $\mathbb{R}$ union of disjoint (possibly semi-infinite or infinite) intervals and  a  potential $V : \mathcal{A} \longrightarrow \mathbb{R}$  and $\beta > 0$, we denote the measure on $\mathcal{A}^N$
\begin{equation}
\mathbb{P}_{V,\mathcal{A}}^N \ (d\lambda_1,\cdots,d\lambda_N):= \frac{1}{Z_{V,\mathcal{A}}^N}\prod_{1\leq i < j \leq N} \lvert \lambda_i - \lambda_j \rvert^\beta \exp \Big( {- N \sum_{1 \leq i \leq N} V(\lambda_i)} \Big) \prod d\lambda_i \ ,
\end{equation}

\noindent with  \[{Z_{V,\mathcal{A}}^N}  = \int_{\mathcal{A}^N} \prod_{1\leq i < j \leq N} \lvert \lambda_i - \lambda_j \rvert^\beta \exp \Big( {- \sum_{1 \leq i  \leq N} V(\lambda_i)} \Big) \prod d\lambda_i  . \]  

\noindent It is well known (see  \cite{AGZ}, \cite{AG} and \cite{DeiI}) that under $\mathbb{P}_{V,\mathcal{A}}^N$ the empirical measure of the eigenvalues converge towards an equilibrium measure:

\begin{prop}
Assume that $V : \mathcal{A}  \longrightarrow \mathbb{R}$ is continuous and if $\infty \in \mathcal{A}$ assume that 

\begin{equation*}
\liminf_{x \rightarrow \infty} \frac{V(x)}{\beta \log |x|} >1 .
\end{equation*} \\

\noindent then the energy defined by 
\begin{equation}\label{energy}
E(\mu) = \int V(x) d\mu(x) - \frac{\beta}{2}  \log |x_1-x_2 | d\mu(x_1) d\mu(x_2)
\end{equation}

\noindent has a unique global minimum on the space $\mathcal{M}_1(\mathcal{A})$ of probability measures on $\mathcal{A}$.\\

\noindent Moreover, under $\mathbb{P}_{V,\mathcal{A}}^N$ the normalized empirical measure $L_N = N^{-1} \sum_{i=1}^N \delta_{\lambda_i}$ converges almost surely and in expectation towards the unique probability measure $\mu_{V} $ which minimizes the energy.

\noindent It has compact support $A$ and it is uniquely determined by the existence of a constant $C$ such that:

\begin{equation*}
\beta \int_\mathcal{A}\log |x-y|  d\mu_{V}(y) - V(x) \leq C  \ ,
\end{equation*}

\noindent with equality almost everywhere on the support. The support of  $\mu_{V}$  is a union of intervals  $A = \underset {{0\leq h \leq g}}{\bigcup} [\alpha_{h,-} ; \alpha_{h,+}]$  with $\alpha_{h,-} < \alpha_{h,+}$  and if $V$ is analytic on a neighbourhood of $A$,
\begin{equation*}
\frac{d\mu_{V}}{dx}= S(x)\prod_{h=0}^g \sqrt{\lvert x - \alpha_{h,-} \rvert \lvert x - \alpha_{h,+} \rvert} \ , 
\end{equation*}
\noindent with $S$  analytic on a neighbourhood of $A$.

\end{prop}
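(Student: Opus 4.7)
The plan is to establish the four claims in turn---existence and uniqueness of the minimizer, the Euler--Lagrange characterization, almost sure convergence of $L_N$, and the analytic structure of $\mu_V$---following the classical treatment of Coulomb gases as in \cite{AGZ}.

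For existence and uniqueness I would use the direct method. The growth hypothesis on $V$ makes the effective confining potential $V(x)-\tfrac{\beta}{2}\log(1+x^2)$ coercive, so after writing
\[
-\log|x-y|=-\tfrac{1}{2}\log(1+x^2)-\tfrac{1}{2}\log(1+y^2)+\log\frac{(1+x^2)^{1/2}(1+y^2)^{1/2}}{|x-y|},
\]
the last term is bounded below, hence $E$ is lower semicontinuous on $\mathcal{M}_1(\mathcal{A})$ with weakly compact sublevel sets; existence of a minimizer follows. Uniqueness comes from the strict convexity of $\nu\mapsto -\iint\log|x-y|\,d\nu(x)d\nu(y)$ on signed measures of zero total mass and finite energy, which is itself a consequence of the Fourier representation $-\iint\log|x-y|\,d\nu(x)d\nu(y)=c\int|\hat\nu(\xi)|^2|\xi|^{-1}d\xi$. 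The Euler--Lagrange condition is obtained by perturbing $\mu_V$ along $(1-t)\mu_V+t\nu$ for $\nu\in\mathcal{M}_1(\mathcal{A})$ of finite energy, differentiating at $t=0^+$, and using that $\mu_V$ is the minimum; this yields exactly the stated inequality, with equality on the support where one can perturb in both directions.

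For the convergence of $L_N$ I would invoke the large deviation principle of Ben Arous--Guionnet (see \cite{AGZ}): under the stated hypotheses, the laws of $L_N$ under $\mathbb{P}_{V,\mathcal{A}}^N$ satisfy an LDP at speed $N^2$ with good rate function $E(\mu)-\inf E$. Formally the density rewrites as $\exp(-N^2\,\widetilde E(L_N))$ modulo the diagonal correction and the partition function; to make this rigorous one truncates $-\log|x-y|$ from above by $-\log\max(|x-y|,\varepsilon)$ and sends $\varepsilon\to 0$. Combined with uniqueness of the minimizer, exponential concentration together with Borel--Cantelli gives almost sure convergence of $L_N$ to $\mu_V$, and convergence in expectation is then automatic.

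The analytic structure is obtained via the Stieltjes transform $G(z)=\int(z-x)^{-1}d\mu_V(x)$, holomorphic on $\mathbb{C}\setminus A$. Differentiating the equality case of the Euler--Lagrange condition on the interior of each cut gives $\beta\,\mathrm{P.V.}\int(x-y)^{-1}d\mu_V(y)=V'(x)$ on $A$, equivalently $\beta(G_+(x)+G_-(x))=2V'(x)$ by the Plemelj formulas. A direct computation then shows that $R(z):=G(z)^2-\tfrac{2}{\beta}V'(z)G(z)$ satisfies $R_+=R_-$ on $A$, so, $V$ being analytic near $A$ and $G(z)\sim 1/z$ at infinity, $R$ extends analytically to a neighbourhood of $A$. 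This yields the algebraic identity $G^2-\tfrac{2}{\beta}V'G+R=0$ and, after solving and taking boundary values, the announced form $\tfrac{d\mu_V}{dx}=S(x)\prod_h\sqrt{|x-\alpha_{h,-}|\,|x-\alpha_{h,+}|}$. The main obstacle is precisely this last step: one must verify that the odd-order zeros of the discriminant-type function under the square root are \emph{exactly} the $2(g+1)$ endpoints $\alpha_{h,\pm}$ and that the residual factor $S$ is analytic and nonvanishing on $A$. This requires combining a counting argument for the branch points of the algebraic curve defined by $G$ with the positivity of the density in the interior of each cut and the strictness of the Euler--Lagrange inequality outside the support.
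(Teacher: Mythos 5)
Your proposal is correct and is essentially the paper's own approach: the paper does not prove this proposition at all but states it as classical, citing \cite{AGZ}, \cite{AG} and \cite{DeiI}, and your outline (coercivity plus lower semicontinuity and strict convexity of the logarithmic energy for existence/uniqueness, the Euler--Lagrange perturbation argument, the Ben Arous--Guionnet large deviation principle for the almost sure convergence of $L_N$, and the Stieltjes-transform/quadratic-equation argument giving $\frac{d\mu_V}{dx}=S(x)\prod_h\sqrt{|x-\alpha_{h,-}||x-\alpha_{h,+}|}$ with $S$ analytic) is precisely the standard proof found in those references. You also correctly identify the only genuinely delicate point, namely the endpoint/branch-point analysis showing that the function under the square root has finitely many real zeros near the support and that the odd-order ones are exactly the edges, which is handled in the cited literature exactly along the lines you indicate.
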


\noindent We make the following assumptions:
\begin{hyp}\label{hypo}
\end{hyp}
\begin{itemize}
\item  $V$  is  continuous and goes to infinity faster than  $\beta \ \log \rvert x \lvert$ if $\mathcal{A}$ is semi-infinite.
\item The support of  $\mu_{V}$ is a union of $g+1$ intervals  $A = \underset {{0\leq h \leq g}}{\bigcup} A_h$ with $A_h= [\alpha_{h,-} ; \alpha_{h,+}]$,  $\alpha_{h,-} < \alpha_{h,+}$  and
\begin{equation}
\frac{d\mu_{V}}{dx}= \rho_V(x)= S(x)\prod_{h=0}^g \sqrt{\lvert x - \alpha_{h,-} \rvert \lvert x - \alpha_{h,+} \rvert}  \ \ \ \ with \  S > 0 \ on \ [\alpha_{h,-} ; \alpha_{h,+}] .
\end{equation}

\item $V$  extends to an holomorphic function on a open  neighborhood  $U$ of $A$, $U = \underset {{0\leq h \leq g}}{\bigcup} U_h$ and $A_h \subset U_h$ 

\item The function $ V(\cdot) - \beta \int_\mathcal{A}\log |\cdot-y|  d\mu_{V}(y) $ achieves its minimum on the support only.

\end{itemize}

\noindent The last hypothesis is useful to ensure a control of large deviations. Before stating the main theorems, we will introduce some  notations.\\

\noindent \textbf{Notations}
\begin{itemize}
\item For all $ 0 \leq h \leq g$, $ \epsilon_{\star,h} = \mu_{V} (A_h)$ and $\boldsymbol{\epsilon_\star} = (\epsilon_{\star, 0}, \cdots,\epsilon_{\star, g})  $.
\item For all $ 0 \leq h \leq g$, $ N_{\star,h} = N \epsilon_{\star,h}$, $\boldsymbol{N_\star} = N \boldsymbol{\epsilon_\star}$, and $ \lfloor \boldsymbol{N_\star} \rfloor = (\lfloor N \epsilon_{\star,0}  \rfloor  , \cdots, \lfloor N \epsilon_{\star,g}  \rfloor)  $.

\item For a configuration $\boldsymbol{\lambda} \in \mathbb{R}^N$,   $N(\boldsymbol{\lambda})$ denotes the vector such that for all $0 \leq h \leq g$,  $ (N(\boldsymbol{\lambda}))_h$ is the number of eigenvalues in $U_h$.
\item For an index $i$, we introduce the classical location $E^{V,N}_{i}$ of the $i-th$ eigenvalue by 
\begin{equation*}
\int_{- \infty}^{E^{V,N}_{i}} \rho_V(x) dx = \frac{i}{N}  .
\end{equation*}

\noindent In the case where the fraction $i/N$ exactly equals to the sum of the mass of the first cuts, we consider the smallest  $E$ satisfying the equality.

\item For a configuration $\boldsymbol{\lambda} \in \mathbb{R}^N$,   let $\lambda_{h,i}$ the i-th smallest eigenvalue in $U_h$.
\item For a vector $\mathbf{x} \in \mathbb{R}^{g+1}$ and $ 0 \leq h \leq g$, $[\mathbf{x}]_h = x_0 + \cdots + x_h$ and $[\mathbf{x}]_{-1}=0$ .
\item For a vector $\mathbf{x} \in \mathbb{R}^{g+1}$,    $ 0 \leq h \leq g$ and $i \in \mathbb{N}$ we write  $i[h,\mathbf{x}] = i - [\mathbf{x}]_{h-1}$.
\item For a   signed measure $\nu$ and a  function $f \in L^1(d|\nu|)$  we will write $\nu(f) = \int f d\nu$.
\end{itemize}

\noindent The main goal of this paper is to prove universality results in the bulk and at the edge. 

\noindent Fixed eigenvalue gaps have been proved to be universal  for regular one-cut potentials (see \cite{BFG}, \cite{BEYI}), and their convergence can be obtained using the translation invariance of the eigenvalue gaps as in \cite{EY} (see also \cite{Tao} for the case of the GUE).  More precisely,  if $V$ is the Gaussian potential $G(\lambda):=\beta \frac{\lambda^2}{4}$ we have for $i$ away from the edge

\begin{equation}\label{convergencegap}
N  \rho_V(E^{V,N}_{i}) (\lambda_{i+1} - \lambda_i) \xrightarrow{\ \ \ \mathcal{L} \ \ \ } \mathcal{G_\beta} \ ,
\end{equation}

\noindent where  $\mathcal{G_\beta}$  is some distribution (corresponding to the Gaudin distribution  for $\beta=2$). \\

\noindent  Our first Theorem states   that this result  holds for any multi-cut potential satisfying  Hypothesis \ref{hypo}.

\begin{thm}\label{theoreme}
Let $\beta >0 $ and assume that $V$ satisfies Hypothesis \ref{hypo}.\\

\noindent Let $i \leq N$ such that for some  $\varepsilon > 0$  and  $h\in \llbracket 0 ; g  \rrbracket$ $,  \varepsilon N < i - [\boldsymbol{N_\star}]_{h-1} < N_{\star,h} - \varepsilon N $. Then

\begin{equation*}
N  \rho_V(E^{V,N}_{i}) (\lambda_{i+1} - \lambda_i) \xrightarrow{\ \ \ \mathcal{L} \ \ \ } \mathcal{G_\beta} .
\end{equation*}
\end{thm}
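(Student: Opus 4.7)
The plan is to reduce the multi-cut problem to a family of fixed-filling-fraction models, construct an approximate transport map between two such models in the spirit of \cite{BFG}, transfer the Gaussian gap asymptotics \eqref{convergencegap} through this map, and then average over filling fractions. First I would use the rigidity results of \cite{YLI} together with the strict-inequality hypothesis on the support (the fourth item of Hypothesis \ref{hypo}) to conclude that $N(\boldsymbol{\lambda}) - \lfloor\boldsymbol{N_\star}\rfloor$ is concentrated on a window of size $O(\log N)$. Decomposing
\begin{equation*}
\mathbb{P}^N_{V,\mathcal{A}} = \sum_{\mathbf{N}} \mathbb{P}^N_{V,\mathcal{A}}\!\left[N(\boldsymbol{\lambda}) = \mathbf{N}\right]\, \mathbb{P}^{N,\mathbf{N}}_{V,\mathcal{A}},
\end{equation*}
reduces the problem to proving convergence of the bulk gap under each conditional law $\mathbb{P}^{N,\mathbf{N}}_{V,\mathcal{A}}$, uniformly for $\mathbf{N}$ in this window; these conditional laws are exactly the fixed-filling-fraction $\beta$-log-gases of \cite{BGK}.

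The heart of the argument is the construction of an approximate transport $T_N = \mathrm{id} + N^{-1} T_0 + N^{-2} T_1$ between two such fixed-filling-fraction measures. Expanding the push-forward identity in powers of $1/N$ produces at each order a linear equation $\Xi[T_k] = R_k$, where $\Xi$ is the master operator associated with $\mu_V$. In the one-cut case of \cite{BFG}, $\Xi$ is invertible on a suitable space of functions holomorphic on $U$, because $S>0$ on the support and $\rho_V$ vanishes like a square-root at the edges. In the multi-cut setting, $\Xi$ acquires a $g$-dimensional cokernel indexed by infinitesimal variations of the filling fractions; however, precisely because $\mathbf{N}$ is prescribed identically on both sides of the transport, the right-hand sides $R_k$ automatically lie in the image of $\Xi$, and $T_0, T_1$ can be built cut by cut with the required holomorphic extension to $U$. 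Taking as source a reference potential whose fixed-filling-fraction restriction to each cut $A_h$ can be further transported, via the one-cut maps of \cite{BFG}, to a rescaled Gaussian $\beta$-ensemble of size $N_{\star,h}$, the convergence \eqref{convergencegap} transfers through the composed transport and yields the claimed convergence under $\mathbb{P}^{N,\mathbf{N}}_{V,\mathcal{A}}$, uniformly in $\mathbf{N}$. Reassembling via the decomposition above, and using rigidity to discard the tail of $N(\boldsymbol{\lambda})$, completes the proof.

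The main obstacle is the quantitative construction of $T_0$ and $T_1$ in the multi-cut geometry: one has to invert $\Xi$ on the subspace of holomorphic functions that preserve the filling fractions, with bounds sharp enough near the $2(g+1)$ endpoints $\alpha_{h,\pm}$ to preserve the analytic extension to $U$, and uniform in the $O(\log N)$ shift of $\mathbf{N}$ around $\lfloor\boldsymbol{N_\star}\rfloor$. Controlling the residual error in the push-forward identity at order $1/N^2$, and comparing the classical locations $E^{V,N}_i$ across such shifts of the filling fractions, are the other technical points where substantial care is required.
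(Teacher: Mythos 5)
Your overall architecture is the same as the paper's: decompose $\tilde{\mathbb{P}}^N_{V,B}$ over the filling fractions $\mathbf{N}$, truncate the sum to a window around $\lfloor\boldsymbol{N_\star}\rfloor$ (the paper gets this from the discrete-Gaussian convergence of Lemma \ref{GaussDiscret}, borrowed from \cite{BGII}, rather than from \cite{YLI}, but either concentration input is adequate since the resulting error in $\boldsymbol\epsilon-\boldsymbol\epsilon_\star$ is $O(K/N)$), build in each fixed-filling-fraction model an approximate transport of the form $Id+N^{-1}X^1+N^{-2}X^2$, compose with the one-cut maps of \cite{BFG} applied cut by cut, replace $\boldsymbol\epsilon$ by $\boldsymbol\epsilon_\star$ using smoothness of $\Phi^{\boldsymbol\epsilon,h}$ in $\boldsymbol\epsilon$, compare the classical locations, and conclude from \eqref{convergencegap}. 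So this is essentially the paper's route, not a different one.

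The one step you have not actually supplied, and which is the heart of the paper's construction, is the identification of the target of the multi-cut transport. As written, your ``reference potential'' suggests another one-body potential; but any fixed-filling-fraction model with a one-body potential still carries the cross-cut interaction $\prod\lvert\lambda_{h,i}-\lambda_{h',j}\rvert^\beta$, hence does not factorize over cuts, and its ``restriction to each cut'' is not a one-cut $\beta$-ensemble to which \cite{BFG} applies. The paper's fix is to take as target $\mathbb{P}^{N,\boldsymbol\epsilon}_{T_1,B}$, where the two-body inter-cut term $W$ is removed and compensated by the effective potential $\tilde{V}^{\boldsymbol\epsilon}(x)=V(x)-\int W(x,y)\,d\mu_V^{\boldsymbol\epsilon}(y)$; this makes the target an exact product over cuts while the constrained equilibrium measure stays equal to $\mu_V^{\boldsymbol\epsilon}$ along the whole interpolation $T_t$, which is precisely what justifies the ansatz $Id+O(1/N)$ and the linearized equations for $\mathbf{y}_1,\mathbf{z}$. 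Without this decoupling your reduction to the one-cut results fails. A smaller correction: the cokernel of $\Xi$ is not circumvented because the right-hand sides ``lie in the image''; one solves $\Xi f=k+\kappa_k$ with $\kappa_k$ constant on each $U_h$, and these piecewise constants are harmless only because they integrate to zero against $M_N$ when the filling fractions are fixed — that is the correct version of the mechanism you invoked, and it is where the fixed-$\mathbf{N}$ structure really enters.
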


\noindent We now state the results at  the edge. Under a Gaussian potential and for general $\beta$, the behaviour of the eigenvalues at  the edge is described by the Stochastic Airy Operator (We refer to \cite{RRV}). J. Ram\'{i}rez, B. Rider and B. Vir\'{a}g have shown that under the Gaussian potential, the $k$ first rescaled  eigenvalues $(N^{2/3}(\lambda_1 +2), \cdots, N^{2/3}(\lambda_k +2))$ converge in distribution to $(\Lambda_1, \cdots, \Lambda_k)$ where $\Lambda_i$ is the $i$-th smallest eigenvalue of the stochastic Airy operator $SAO_\beta$.\\ 

\noindent In the following result,   $\Phi^{h}$  are smooth transport maps (defined later).

\begin{thm}\label{theoremedge}
Assume that $V$ satisfies Hypothesis \ref{hypo}.  Let $\tilde{\mathbb{P}}_{V,\mathcal{A}}^{N}$ denote the distribution of the ordered eigenvalues under  $ \mathbb{P}_{V,\mathcal{A}}^{N}$.\\ 

\noindent If  for all $0 \leq h \leq g$ $f_h: \mathbb{R}^m\longrightarrow \mathbb{R}$  is Lipschitz and compactly supported we have:

\begin{equation*}
\begin{split}
& \lim_{N\rightarrow \infty}  \int \prod_{0 \leq h \leq g}  f_h \big(N^{2/3}(\lambda_{h,1}-  \alpha_{h,-}),  \cdots,N^{2/3}(\lambda_{h,m} - \alpha_{h,-})\big)d\tilde{\mathbb{P}}_{V,\mathcal{A}}^{N} \\
   &= \prod_{0 \leq h \leq g}  \mathbb{E}_{SAO_\beta}\  f_h (\Phi^{h}(-2) \Lambda_1 , \cdots , \Phi^{h}(-2) \Lambda_m) .
\end{split}
\end{equation*}

\end{thm}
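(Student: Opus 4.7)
The plan is to reduce the statement to the fixed filling fraction model of \cite{BGK}, construct an approximate transport from a product of independent Gaussian $\beta$-ensembles to it via the method of \cite{BFG}, and transfer the known Gaussian edge asymptotics of \cite{RRV}. First, I decompose
\begin{equation*}
\mathbb{P}_{V,\mathcal{A}}^N = \sum_{\mathbf{N}} p_{\mathbf{N}}\, \mathbb{P}_{V,\mathcal{A}}^{N,\mathbf{N}}
\end{equation*}
according to the filling fraction vector $N(\boldsymbol{\lambda})$. The large-deviation control afforded by Hypothesis \ref{hypo}, combined with the rigidity results of \cite{YLI}, concentrates $p_{\mathbf{N}}$ on $\mathbf{N}$ at distance $O(\sqrt{N}\log N)$ from $\lfloor \boldsymbol{N_\star}\rfloor$. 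Since the $f_h$ are Lipschitz and compactly supported, probing only the $O(1)$ smallest eigenvalues near each lower edge $\alpha_{h,-}$, it suffices to prove the stated convergence for $\mathbb{P}_{V,\mathcal{A}}^{N,\mathbf{N}}$ uniformly over such $\mathbf{N}$.

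Once the filling fractions are fixed, the interaction between eigenvalues in distinct cuts $U_h$ is smooth and long-range, and can be absorbed by the transport construction of \cite{BFG}, adapted to the multi-cut setting in the forthcoming sections. This produces, for each cut, an approximate transport $T_N^h$ of the Gaussian $\beta$-ensemble of $N_h$ particles onto the $h$-th marginal of $\mathbb{P}_{V,\mathcal{A}}^{N,\mathbf{N}}$, whose leading part acts componentwise by a diffeomorphism sending the semicircle distribution on $[-2,2]$ onto $\mu_V|_{A_h}/\epsilon_{\star,h}$. The factor $\Phi^h(-2)$ in the statement is the derivative of this diffeomorphism at the Gaussian edge $-2$, and the construction must deliver a deviation of order $o(N^{-2/3})$ between $T_N^h$ and its limit, uniformly near $-2$ and with high probability under the Gaussian reference.

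Applying \cite{RRV} to the independent Gaussian reference ensembles, $(N_h^{2/3}(\mu_{h,i}+2))_{i\le m}$ converges jointly in $h$ and $i$ to $(\Lambda_1,\dots,\Lambda_m)$, which are independent across $h$ and distributed, in each $h$, as the first $m$ eigenvalues of $SAO_\beta$. Pushing this through $T_N^h$ and Taylor-expanding at $-2$ gives
\begin{equation*}
N^{2/3}(\lambda_{h,i}-\alpha_{h,-}) = \Phi^h(-2)\cdot N_h^{2/3}(\mu_{h,i}+2) + o_{\mathbb{P}}(1),
\end{equation*}
the ratio $(N/N_h)^{2/3}\to \epsilon_{\star,h}^{-2/3}$ being absorbed into $\Phi^h(-2)$; the claim then follows from continuity of the $f_h$ and the $\mathbf{N}$-uniform reduction above.

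The main obstacle is the precision required of the approximate transport at the soft edge: whereas bulk gap universality is obtained from a transport error of $o(N^{-1})$, the $N^{2/3}$ rescaling at $\alpha_{h,-}$ forces the stronger bound $o(N^{-2/3})$ uniformly up to the edge. Achieving this requires inverting the linearized equilibrium operator in weighted spaces adapted to the $\sqrt{\cdot}$-vanishing of $\rho_V$ at the endpoints, and using edge rigidity to absorb the probabilistic remainder of the linearization. A secondary delicate point is the interaction between the transport, built for a given $\mathbf{N}$, and the residual Gaussian fluctuations of the filling fractions, which must be shown to preserve $N^{-2/3}$ precision when one averages over $\mathbf{N}$.
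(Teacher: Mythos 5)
Your overall architecture (decompose over the filling fractions, transport the fixed--filling-fraction measure onto a decoupled product of one-cut models and then onto independent Gaussian ensembles, finish with \cite{RRV}) is the same as the paper's, but your reduction step has a genuine gap. You only claim that the filling fractions concentrate at distance $O(\sqrt{N}\log N)$ from $\lfloor \boldsymbol{N_\star}\rfloor$ and then assert that it suffices to prove the edge convergence for the conditional measures uniformly over this window. This is false at the $N^{-2/3}$ scale: under the constrained measure with fractions $\boldsymbol\epsilon=\mathbf{N}/N$, the smallest eigenvalue of the $h$-th cut sits near the \emph{constrained} edge $\alpha^{\boldsymbol\epsilon}_{h,-}$, which by Lemma \ref{regularity} depends smoothly (and in general non-trivially) on $\boldsymbol\epsilon$, hence differs from $\alpha_{h,-}$ by order $\lvert\boldsymbol\epsilon-\boldsymbol\epsilon_\star\rvert$. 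For $\lvert\mathbf{N}-\boldsymbol{N_\star}\rvert\sim\sqrt{N}\log N$ this shift is of order $\log N\, N^{-1/2}\gg N^{-2/3}$, so $N^{2/3}(\lambda_{h,1}-\alpha_{h,-})$ does not converge to the claimed limit under those conditional measures, and the uniform reduction breaks down. What is actually needed, and what the paper uses, is the much stronger multi-cut input of \cite{BGII} (Lemma \ref{GaussDiscret}): $N(\boldsymbol\lambda)-\lfloor\boldsymbol{N_\star}\rfloor$ is tight and converges to a discrete Gaussian, with tails $O(e^{-K^2})$, together with the bound (\ref{ecarteps}), $\mathbb{E}_{V,B}^{N}\big(\sum_h \lvert L_N(B_h)-\mu_V(B_h)\rvert\big)\leq C\log N/N$, which controls the replacement of $\alpha^{\boldsymbol\epsilon}_{h,-}$ by $\alpha_{h,-}$ and of $(\Phi^{\boldsymbol\epsilon,h})'$ by $(\Phi^{\boldsymbol\epsilon_\star,h})'$ with errors $O(\log N\,N^{-1/3})$. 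Rigidity at scale $\sqrt{N}$ does not substitute for this $O(1)$ rigidity of the filling fractions.

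A secondary point: the ``main obstacle'' you flag, namely that the transport must be accurate to $o(N^{-2/3})$ and that this requires inverting the linearized equilibrium operator in weighted spaces adapted to the square-root vanishing of the density, plus edge rigidity, is not needed and is not how the argument runs. The approximate transport of the paper has the form $Id+\frac{1}{N}X^{1}+\frac{1}{N^2}X^{2}$ with $\sup_{h,i}\norm{X^{1}_{h,i}}_{L^4}\leq C\log N$ and $\norm{X^{2}}_{L^2}\leq C\sqrt{N}(\log N)^2$, so individual positions are displaced by $O(\log N/N)\ll N^{-2/3}$ automatically; after the $N^{2/3}$ rescaling the edge error in Proposition \ref{theoremfixed} and Corollary \ref{corfixed} is $C\frac{(\log N)^3}{N}\norm{f}_\infty+C\big(\sqrt{m}(\log N)^2N^{-5/6}+\log N\,N^{-1/3}+M^2N^{-4/3}\big)\norm{\nabla f}_\infty$, obtained from the same loop-equation and concentration estimates as in the bulk, with no weighted edge analysis. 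In short, the step you treat as routine (averaging over $\mathbf{N}$) is the delicate one, and the step you treat as the obstacle is already covered by the $\frac1N$-structure of the flow.
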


\noindent It is also interesting to study the behaviour of the $i-th$ eigenvalue where $i= [\lfloor \boldsymbol{N_\star} \rfloor]_{h-1}+1$. This eigenvalue would be typically  located at the right edge of the $h$-th cut or the left edge of the $h+1$-st cut. The following theorem gives the limiting distribution of such eigenvalues. We will use the following  fact proved by  G.Borot and A.Guionnet  in \cite{BGII}: along the subsequences such that $\boldsymbol{N_\star} \mod \mathbb{Z}^{g+1}  \longrightarrow \kappa $ where $\kappa \in [0;1[^{g+1}$ and under ${\mathbb{P}}_{V,\mathcal{A}}^{N}$,  the vector $N(\boldsymbol{\lambda}) - \lfloor \boldsymbol{N_\star} \rfloor $ converges towards a random discrete Gaussian vector (not necessarily centered).

\begin{thm}\label{theoremedge2}
Let $0\leq h \leq g$, $i = [\lfloor \boldsymbol{N_\star} \rfloor]_{h-1} +1$ and $\Delta_{h}(\boldsymbol{\lambda}) = [\lfloor \boldsymbol{N_\star} \rfloor]_{h-1} - [N(\boldsymbol{\lambda})]_{h-1}$. Define 

\begin{equation*}
\xi_h (\boldsymbol{\lambda})= \mathbf{1}_{\Delta_{h}(\boldsymbol{\lambda}) \geq 0} \ \alpha_h^- + \mathbf{1}_{\Delta_{h}(\boldsymbol{\lambda}) < 0} \ \alpha_{h-1}^+ \ ,
\end{equation*}
\noindent where the expression above simplifies to $\alpha_0^-$ for $h=0$. Then along the subsequences $\boldsymbol{N_\star}\mod \mathbb{Z}^{g+1} \longrightarrow \kappa $ and under $\tilde{\mathbb{P}}_{V,\mathcal{A}}^{N}$

\begin{equation*}
\begin{split}
 \xi_h  &\xrightarrow{\ \ \ \mathcal{L} \ \ \ }  \mathbf{1}_{\Delta_{h , \kappa} \geq 0} \ \alpha_h^- + \mathbf{1}_{\Delta_{h, \kappa}< 0} \ \alpha_{h-1}^+ \ , \\
 N^{2/3} (\lambda_i - \xi_h) &\xrightarrow{\ \ \ \mathcal{L} \ \ \ } \mathbf{1}_{\Delta_{h , \kappa} \geq 0} \  \Lambda_{\Delta_{h , \kappa} + 1}  \  \Phi^h(-2) + \mathbf{1}_{\Delta_{h , \kappa} < 0} \   \Lambda_{-\Delta_{h , \kappa} }  \  \Phi^{h-1}(2) \ , 
\end{split}
\end{equation*}
\noindent where $(\Lambda_i)_i$ denote the eigenvalues  of $SAO_{\beta}$, $\Phi^{h}$ is a transport map introduced later and $\Delta_{h, \kappa}$ is a  discrete Gaussian random variable independent from $\Lambda$ if $ 1 \leq h \leq g$, and   equals to $0$ if $h=0$. 
\end{thm}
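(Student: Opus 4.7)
The plan is to decompose the expectation under $\tilde{\mathbb{P}}_{V,\mathcal{A}}^N$ according to the random filling fraction vector $\mathbf{N} = N(\boldsymbol{\lambda})$, and then combine two inputs: the discrete Gaussian limit for $N(\boldsymbol{\lambda}) - \lfloor \boldsymbol{N_\star}\rfloor$ from \cite{BGII}, and the fixed filling fraction edge statement established in Section 3 (the analogue of Theorem \ref{theoremedge} inside a conditional model where the $N_h$'s are frozen). With $i = [\lfloor \boldsymbol{N_\star}\rfloor]_{h-1}+1$ and $\Delta_h = [\lfloor \boldsymbol{N_\star}\rfloor]_{h-1} - [\mathbf{N}]_{h-1}$, a direct count identifies the $i$-th overall eigenvalue conditionally on $\mathbf{N}$: when $\Delta_h \geq 0$ it equals $\lambda_{h,\Delta_h+1}$, the $(\Delta_h+1)$-th from the left edge of cut $h$, while when $\Delta_h < 0$ it equals $\lambda_{h-1,N_{h-1}+\Delta_h+1}$, the $(-\Delta_h)$-th from the right edge of cut $h-1$. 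This dichotomy generates the two indicator terms in the statement and $\xi_h$ simply names the corresponding edge.

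\textbf{Joint limit.} Along a subsequence with $\boldsymbol{N_\star} \bmod \mathbb{Z}^{g+1} \to \kappa$, \cite{BGII} gives $N(\boldsymbol{\lambda}) - \lfloor \boldsymbol{N_\star}\rfloor \xrightarrow{\mathcal L} \Delta_\kappa$, a (possibly shifted) discrete Gaussian, whence $\Delta_h \xrightarrow{\mathcal L} \Delta_{h,\kappa}$ and, by the continuous mapping theorem, $\xi_h$ converges as asserted. For the eigenvalue part I would apply the fixed filling fraction edge statement uniformly over $\mathbf{N}$ in a window of size $O(\sqrt{\log N})$ around $\lfloor \boldsymbol{N_\star}\rfloor$ (outside of which Gaussian tail bounds from \cite{BGII} make the contribution negligible). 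For every fixed integer $k$ one obtains
\begin{equation*}
N^{2/3}(\lambda_{h,k+1} - \alpha_{h,-}) \xrightarrow{\mathcal L} \Phi^h(-2)\,\Lambda_{k+1},\qquad N^{2/3}(\lambda_{h-1,N_{h-1}-|k|+1} - \alpha_{h-1,+}) \xrightarrow{\mathcal L} \Phi^{h-1}(2)\,\Lambda_{|k|},
\end{equation*}
with right-hand side laws coming from $SAO_\beta$ and independent of $\mathbf{N}$. Since these edge limits depend on $\mathbf{N}$ only through which index to read off in each cut, the discrete Gaussian $\Delta_{h,\kappa}$ decouples in the joint limit from the Airy spectrum $(\Lambda_j)$ — the former is a macroscopic object whereas the latter is local at the edge — and summing over the two cases $\Delta_h \geq 0$ and $\Delta_h < 0$ yields the product structure of the theorem. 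For $h=0$ the convention $[\cdot]_{-1}=0$ forces $\Delta_0 \equiv 0$ deterministically, the second indicator disappears, and the statement collapses to Theorem \ref{theoremedge} applied to $\lambda_{0,1}$.

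\textbf{Main obstacle.} The delicate point is the uniformity of the fixed filling fraction edge estimate as $\mathbf{N}/N$ varies around $\boldsymbol{\epsilon_\star}$: the transport maps constructed in Section 2 and their regularity constants must depend sufficiently continuously on $\mathbf{N}/N$ to justify interchanging the limit with the summation over filling fractions. The asymptotic independence of $\Delta_{h,\kappa}$ from the Airy spectrum invoked above is a consequence of this uniformity rather than of a soft probabilistic argument, and I expect it to be the main technical cost of the proof; a secondary care is needed at $\Delta_h = 0$, where the convention $\mathbf{1}_{\Delta_h \geq 0}$ rather than strict inequality is essential to ensure that the leftmost eigenvalue of cut $h$ is selected.
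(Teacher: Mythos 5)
Your proposal follows essentially the same route as the paper: decompose the expectation over the filling-fraction vector $\mathbf{N}$, identify $\lambda_i$ conditionally as $\lambda_{h,\Delta_h+1}$ or the $(-\Delta_h)$-th eigenvalue from the right edge of cut $h-1$ (exactly the paper's $i[h,\mathbf{N}]=\Delta_h+1$ bookkeeping), apply the fixed filling fraction edge estimate (Corollary \ref{corfixed}, whose constants and transport maps are uniform/smooth in $\boldsymbol\epsilon\in\tilde{\mathcal{E}}$, cf.\ (\ref{etoile})) on a window around $\lfloor\boldsymbol{N_\star}\rfloor$ with Gaussian tails from Lemma \ref{GaussDiscret}, and conclude with the discrete Gaussian limit of \cite{BGII} together with the $SAO_\beta$ convergence of \cite{RRV}, the independence coming precisely from the fact that the edge limit no longer depends on $\mathbf{N}$ inside the window. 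This matches the paper's argument, including the technical point you single out (uniformity in $\mathbf{N}/N$) and the degenerate case $h=0$.
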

\noindent We could state a similar result  about the joint distribution of  $k$ consecutive eigenvalues as well. We note also that  using the transport methods of this paper, and adapting the  methods presented in \cite{FG} (notably Lemma 4.1 and the proof of Corollary 2.8), we could prove universality of the correlation functions in the bulk. This would require  rigidity estimate for the fixed filling fractions model introduced in the next section , which could be done as in \cite{bourgade2012bulk}, \cite{YLI}. As this universality result has already been proved in \cite{ShII}, we do not continue in this direction.\\

\noindent In order to study the fluctuations of the eigenvalues we place ourselves in the setting of the fixed filling fraction model introduced in \cite{BGII}, in which the number of eigenvalues in each cut is fixed. The idea is to construct an an approximate transport between our original measure, and a measure in which the interaction inbetween different cuts has been removed. This measure can then be written as a product measure and we can use the results proved for the one cut regime in \cite{BFG}. We will construct this map in the second section and show universality in the fixed filling fractions models  in Section 3. We will deduce from it the proofs of Theorems \ref{theoreme},  \ref{theoremedge} and \ref{theoremedge2} in the fourth section.

\section*{Acknowledgements} The author  would like to thank Alice Guionnet for the very helpful discussions and  comments.

\section{Fixed Filling Fractions}
\subsection{Introducing the model}
\noindent We consider a slightly different model with a more general type of interaction between  the particles and in which the number of particles in each cut is fixed. We will refer to \cite{BGK} for the known  results in this setting. For each  $ 0 \leq h \leq g$, let  $ B_h=[\beta_{h,-} ; \beta_{h,+}]$ be a small enlargement of $A_h=[\alpha_{h,-} ; \alpha_{h;+}]$ included in $U_h$ and  $ B = \underset {{0\leq h \leq g}}{\bigcup} B_h$ . It is well known (see for instance \cite{BGI}) that under our Hypothesis, the eigenvalues will leave $B$ with an exponentially small probability and we can thus study the behaviour of the eigenvalues under  $ \mathbb{P}_{V,B}^{N}$ instead of  $ \mathbb{P}_{V,\mathcal{A}}^{N}$ without loss of generality.  \\

\noindent We fix $\mathbf{N}=(N_0,\cdots,N_g) \in \mathbb{N}^{g+1}$ such that $\sum_{h=0}^{g} N_h = N$ and we want to consider a model in which the number of particles in each $B_h$ is fixed equal to $N_h$. Let $\boldsymbol\epsilon = \mathbf{N}/N \in ]0;1[^{g+1}$ and  for $T : B \times B \longrightarrow \mathbb{R}$ consider the probability measure on  $\mathbf{B}=\prod_{h=0}^g (B_h)^{N_h}$:

\begin{equation}
\begin{split}
\mathbb{P}_{T,B}^{N,\boldsymbol\epsilon} (\mathbf{d}\boldsymbol\lambda):=  \frac{1}{Z_{T,B}^{N,\boldsymbol\epsilon}} & \ \ \ \   \prod_{h=0}^g \prod_{1\leq i < j \leq N_h} \lvert \lambda_{h,i} - \lambda_{h,j} \rvert^\beta  \exp \Big( {- \frac{1}{2} \sum_{0\leq h, h' \leq g} \sum_{\substack{1\leq i \leq N_h \\ 1\leq j \leq N_{h'}}} T(\lambda_{h,i}, \lambda_{h',j})} \Big)\\
& \prod_{0\leq h < h' \leq g} \prod_{\substack{1\leq i \leq N_h \\ 1\leq j \leq N_{h'}}} \lvert \lambda_{h,i} - \lambda_{h',j} \rvert^\beta  \prod_{h=0}^g  \prod_{i=1}^{N_h} \mathbf{1}_{B_h}(\lambda_{h,i}) d\lambda_{h,i}  .
\end{split}
\end{equation}

\noindent  Note that with $T(\lambda_1, \lambda_2) = - (V(\lambda_1) +  V(\lambda_2))$  and without the location constraints, we are in the same setting as in the previous section.\\

\noindent As in the original model, we can prove the following result ( see \cite{BGK}):
\begin{prop}
Assume that $T : B \times B \longrightarrow \mathbb{R}$ is continuous.\\

\noindent Assume also that the energy defined by 
\begin{equation}\label{energyII}
E(\mu) = - \frac{1}{2}\int T(x_1,x_2) + \beta \log |x_1-x_2 | d\mu(x_1) d\mu(x_2)
\end{equation}

\noindent has a unique global minimum on the space $\mathcal{M}_1^{\boldsymbol\epsilon}(B)$ of probability measures on $B$ satisfying $\mu [B_h]= \epsilon_h$.\\

\noindent Then  under $\mathbb{P}_{T,B}^{N,\boldsymbol\epsilon}$ the normalized empirical measure $L_N = N^{-1} \sum_{h=0}^{g} \sum_{i=1}^{N_h} \delta_{\lambda_{h,i}}$ converges almost surely and in expectation towards the unique probability measure $\mu_T^{\boldsymbol\epsilon}$ which minimizes the energy.\\

\noindent Moreover it has compact support $A_T^{\boldsymbol\epsilon}$ and it is uniquely determined by the existence of  constants $C_{\boldsymbol\epsilon,h}$ such that:

\begin{equation}\label{caracterisation}
\beta \int_B\log |x-y|  d\mu_{T}^{\boldsymbol\epsilon}(y) + \int_B T(x,y) d\mu_{T}^{\boldsymbol\epsilon}(y)  \leq C_{\boldsymbol\epsilon,h}  \ \ \ on \ \  B_h
\end{equation}

\noindent with equality almost everywhere on the support.The support of  $\mu_{T}^{\boldsymbol\epsilon}$  is a union of $l+1$  intervals  $A_T^{\boldsymbol\epsilon} = \underset {{0\leq h \leq l}}{\bigcup} [\alpha^{T,\boldsymbol\epsilon}_{h,-} ; \alpha^{T,\boldsymbol\epsilon}_{h,+}]$  with $\alpha^{T,\boldsymbol\epsilon}_{h,-} < \alpha^{T,\boldsymbol\epsilon}_{h,+}$ , $ l \geq g$    and if $T$ is analytic on a neighbourhood of $A_T^{\boldsymbol\epsilon}$ ,
\begin{equation*}
\frac{d\mu_{T}^{\boldsymbol\epsilon}}{dx}= S_T^{\boldsymbol\epsilon}(x)\prod_{h=0}^l \sqrt{\lvert x - \alpha^{T,\boldsymbol\epsilon}_{h,-} \rvert \lvert x - \alpha^{T,\boldsymbol\epsilon}_{h,+} \rvert}  \ , 
\end{equation*}
\noindent with $S_T^{\boldsymbol\epsilon}$ analytic on a neighbourhood of $A_T^{\boldsymbol\epsilon}$.

\end{prop}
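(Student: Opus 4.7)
The plan is to mirror the classical large deviation argument for $\beta$-ensembles, adapted to the present setting where the two-body potential $T(x,y)$ replaces the sum $V(x)+V(y)$ and the configuration space is restricted to have exactly $N_h$ particles in each $B_h$. The proof splits naturally into three parts: a large deviation principle for $L_N$ under $\mathbb{P}_{T,B}^{N,\boldsymbol\epsilon}$ with good rate function $E(\mu) - \inf_{\mathcal{M}_1^{\boldsymbol\epsilon}(B)} E$; deduction of almost sure convergence from the assumed uniqueness of the minimizer; and the variational characterization together with the regularity of $\mu_T^{\boldsymbol\epsilon}$.

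First I would rewrite the log density of $\mathbb{P}_{T,B}^{N,\boldsymbol\epsilon}$ in terms of the empirical measure. After grouping the same-cut and cross-cut Vandermonde factors and the symmetric interaction $T$, the log density reads $-N^2 E(L_N) + O(N\log N)$ away from the diagonal, exactly as in the classical case. Because $T$ is continuous on the compact set $B$ and the logarithmic singularity is integrable, the standard regularization scheme used e.g.\ in \cite{AGZ} yields the two LDP bounds at speed $N^2$. The only genuine modification is the filling-fraction constraint: $L_N$ is forced to lie in the slice $\{\mu \in \mathcal{M}_1(B) : \mu[B_h] = N_h/N\}$, so to obtain matching upper and lower bounds one must check that any $\mu \in \mathcal{M}_1^{\boldsymbol\epsilon}(B)$ can be approximated in the weak topology by empirical configurations with exactly $N_h$ points in each $B_h$; this is straightforward since the $B_h$ are disjoint intervals with $\mu[B_h] = \epsilon_h > 0$.

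Once the LDP is in place, uniqueness of the minimizer combined with Borel-Cantelli gives almost sure convergence of $L_N$ to $\mu_T^{\boldsymbol\epsilon}$, and convergence in expectation follows from the exponential tails together with weak compactness of $\mathcal{M}_1(B)$. The variational equations are then obtained by a Lagrange multiplier argument: for any signed measure $\nu$ on $B$ with $\nu[B_h] = 0$ for all $h$ and such that $\mu_T^{\boldsymbol\epsilon} + \eta\nu \geq 0$ for small $\eta > 0$, the first variation of $E$ at $\mu_T^{\boldsymbol\epsilon}$ in direction $\nu$ must be non-negative, which produces constants $C_{\boldsymbol\epsilon,h}$, one Lagrange multiplier per filling-fraction constraint, satisfying \eqref{caracterisation} with equality on the support and inequality on $B_h$. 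The bound $l \geq g$ is immediate: since $\mu_T^{\boldsymbol\epsilon}$ charges each disjoint $B_h$ with mass $\epsilon_h > 0$, its support meets each $B_h$ and hence has at least $g+1$ connected components.

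The main obstacle is the last point: when $T$ is analytic, establishing that the density has the stated product-of-square-roots form with analytic prefactor $S_T^{\boldsymbol\epsilon}$. Here I would introduce the Stieltjes transform $W(x) = \int (x-y)^{-1} d\mu_T^{\boldsymbol\epsilon}(y)$, differentiate the variational equality on each piece of the support, and exploit the fact that $x \mapsto \int \partial_x T(x,y) d\mu_T^{\boldsymbol\epsilon}(y)$ extends holomorphically to a neighborhood of $B$ in order to derive an algebraic equation relating $W(x)$ to explicit analytic data. The branch structure of this equation then forces the density to be the square root of a polynomial-like function times an analytic prefactor, with square-root vanishing at each endpoint. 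Since this regularity analysis is exactly the content of \cite{BGK}, I would quote their derivation rather than reproduce it.
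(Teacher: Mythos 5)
Your proposal is correct and is essentially the route the paper intends: the paper gives no proof of this proposition at all, simply quoting it from \cite{BGK}, and your sketch reproduces the standard argument behind that reference — speed-$N^2$ large deviations for $L_N$ restricted to the filling-fraction slice, Borel--Cantelli plus compactness of $B$ for almost sure and $L^1$ convergence, a first-variation/Lagrange-multiplier argument yielding one constant $C_{\boldsymbol\epsilon,h}$ per cut in (\ref{caracterisation}), and the Stieltjes-transform/algebraic-equation analysis (deferred, as in the paper, to \cite{BGK}) for the square-root form of the density when $T$ is analytic. The only part you treat more lightly than the cited source is the sufficiency direction of the characterization (that a measure satisfying (\ref{caracterisation}) is necessarily the minimizer), but since the paper itself delegates the entire proposition to \cite{BGK}, this does not constitute a gap relative to the paper's treatment.
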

\noindent We point out the fact that the previous  theorem is also valid in the unconstrained case. In that case, we denote by $\mu_T$ the equilibrium measure.
\noindent Let $\boldsymbol\epsilon_{\star,T} = (\mu_{T}(B_h))_{0 \leq h \leq g} $. Then it is obvious that  $\mu_{T}^{\boldsymbol\epsilon_{\star,T}} = \mu_{T}$ . It is  shown in \cite{BGK} that we have the following:\\

\begin{lem}\label{regularity}
If $T$  extends to an analytic function on a neighbourhood of $B$ and the energy definied in (\ref{energyII}) has a unique minimizer over $\mathcal{M}_1(B)$ then for $\boldsymbol\epsilon$ close enough from $\boldsymbol\epsilon_\star$, the energy has a unique minimizer over $\mathcal{M}_1^{\boldsymbol\epsilon}(B)$ and  the number of cuts of the support of  $\mu_{T}^{\boldsymbol\epsilon}$ and $\mu_{T}$ are the same. Moreover,  $\alpha^{T,\boldsymbol\epsilon}_{h,-}$ , $\alpha^{T,\boldsymbol\epsilon}_{h,+}$ and $S_T^{\boldsymbol\epsilon}$ are smooth functions of  $\boldsymbol\epsilon$ (for the $L^\infty$ norm on $B$).
\end{lem}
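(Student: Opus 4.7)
The plan is to prove uniqueness by convexity and obtain smooth dependence on $\boldsymbol\epsilon$ via an implicit function theorem argument applied to the endpoint equations. First, I would establish uniqueness: since $-\iint \log|x-y|\,d\mu(x)\,d\mu(y)$ is strictly convex on signed measures of total mass zero (via its Fourier representation), and the constraint set $\mathcal{M}_1^{\boldsymbol\epsilon}(B)$ is convex, the energy (\ref{energyII}) is strictly convex on it, giving uniqueness of $\mu_T^{\boldsymbol\epsilon}$ for any admissible $\boldsymbol\epsilon$ (existence follows from lower semicontinuity since $T$ is continuous and $B$ is compact).

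Next, I would recast the Euler--Lagrange condition (\ref{caracterisation}) as a scalar Riemann--Hilbert problem. Differentiating in $x$ on the interior of each cut yields
\[
\beta\,\mathrm{PV}\!\int_B \frac{d\mu_T^{\boldsymbol\epsilon}(y)}{x-y} \;=\; -\partial_x\!\int_B T(x,y)\,d\mu_T^{\boldsymbol\epsilon}(y),
\]
a jump relation for the Stieltjes transform $W^{\boldsymbol\epsilon}(z)=\int(z-y)^{-1}\,d\mu_T^{\boldsymbol\epsilon}(y)$, together with the filling-fraction constraints $\tfrac{1}{2i\pi}\oint_{\mathcal{C}_h} W^{\boldsymbol\epsilon}(z)\,dz=\epsilon_h$ and the normalization $W^{\boldsymbol\epsilon}(z)\sim 1/z$ at infinity. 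At $\boldsymbol\epsilon_\star$ this problem is solved by $\mu_T$, with $g+1$ cuts and $S>0$ on $A$. Parametrising candidate solutions by the $2(g+1)$ endpoints $\boldsymbol\alpha=(\alpha^{T,\boldsymbol\epsilon}_{h,\pm})$, one writes
\[
S_T^{\boldsymbol\epsilon}(x) \;=\; -\frac{1}{2\beta\pi^2}\oint_{\mathcal{C}} \frac{\partial_x T(x,y)}{(x-y)\,R(y)}\,dy, \qquad R(y)=\prod_{h=0}^g \sqrt{(y-\alpha^{T,\boldsymbol\epsilon}_{h,-})(y-\alpha^{T,\boldsymbol\epsilon}_{h,+})},
\]
an expression analytic in $\boldsymbol\alpha$ and in the parameters defining $T$. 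The endpoints themselves are then determined by the $g+1$ filling-fraction equations together with the $g+1$ moment conditions encoding $W^{\boldsymbol\epsilon}(z)\sim 1/z$, forming a smooth system $F(\boldsymbol\alpha,\boldsymbol\epsilon)=0$ of $2(g+1)$ equations in $2(g+1)$ unknowns.

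The main obstacle is to check non-degeneracy of $\partial_{\boldsymbol\alpha} F$ at $(\boldsymbol\alpha_\star,\boldsymbol\epsilon_\star)$. This is the linearisation of the endpoint-to-filling map; its invertibility reflects coercivity of the second variation of the energy in the directions transverse to the mass constraints. One establishes it either via strict convexity of the energy viewed as a function of the endpoints (an explicit computation in the spirit of \cite{BGK}) or by identifying the linearised operator with a bounded-below singular integral operator whose symbol is non-vanishing because $S_T(\alpha_{h,\pm})$ has definite sign on each side of every endpoint. Once invertibility is secured, the implicit function theorem yields a smooth map $\boldsymbol\epsilon \mapsto \boldsymbol\alpha^{T,\boldsymbol\epsilon}$ near $\boldsymbol\epsilon_\star$, and the contour-integral formula transfers this smoothness to $S_T^{\boldsymbol\epsilon}$ uniformly in $x\in B$. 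Finally, the number of cuts is preserved because $S_T^{\boldsymbol\epsilon_\star}>0$ on $A$ and the density vanishes with square-root order at each endpoint; both are open conditions, so no new cut can open and no existing cut can close for $\boldsymbol\epsilon$ sufficiently close to $\boldsymbol\epsilon_\star$.
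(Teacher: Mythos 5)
Your proposal has two genuine gaps, and the first is conceptual. The uniqueness step does not work as written: the energy (\ref{energyII}) contains the genuinely two-body term $-\tfrac12\iint T(x_1,x_2)\,d\mu\,d\mu$, a quadratic form with no sign in general, so strict conditional positivity of the pure log kernel on zero-mass differences says nothing about convexity of $E$ on $\mathcal{M}_1^{\boldsymbol\epsilon}(B)$. This is exactly why the lemma takes ``unique minimizer over $\mathcal{M}_1(B)$'' as a \emph{hypothesis}: $E$ is quadratic, a quadratic functional can have a unique minimizer on a convex set while being indefinite, and uniqueness of the unconstrained minimizer therefore does not transfer for free to the slices $\mathcal{M}_1^{\boldsymbol\epsilon}(B)$. (For the specific interpolation $T_t$ used later in the paper one can check positivity of the relevant form on differences having zero mass in each $B_h$, since both the full log kernel and the within-cut kernel are conditionally positive definite; but your argument is stated for general analytic $T$ and simply ignores the $T$-term.)

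The second gap is in the implicit-function scheme: it treats $T$ as if it were one-body. For a genuine two-body interaction the effective potential $\int T(\cdot,y)\,d\mu_T^{\boldsymbol\epsilon}(y)$ depends on the unknown measure, so your contour formula for $S_T^{\boldsymbol\epsilon}$ is not a closed-form solution parametrized by the endpoints (and in the one-body case $T(x,y)=-(V(x)+V(y))$ it does not even reduce to the correct classical formula, where the derivative of the potential is evaluated at the contour variable rather than at $x$). Hence $F(\boldsymbol\alpha,\boldsymbol\epsilon)=0$ is not a closed finite-dimensional system; one has to linearize the full self-consistent Euler--Lagrange equation, i.e.\ invert an operator of the type $\Xi$ appearing in Lemma \ref{thm:inv}, and this nondegeneracy is precisely the step you acknowledge as ``the main obstacle'' and leave as a gesture. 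Moreover, even granting a smooth family of candidate densities, you must verify that they are nonnegative and satisfy the variational inequality strictly off the support (the last point of Hypothesis \ref{hypo} at $\boldsymbol\epsilon_\star$, propagated by continuity) to conclude that they are the constrained equilibrium measures and that no cut opens elsewhere in $B$; openness of ``$S>0$ with square-root vanishing'' alone does not exclude mass appearing away from $A$. For comparison, the paper itself gives no proof of this lemma: it is quoted from \cite{BGK}, so a complete argument here would in any case have to reproduce the functional-analytic input of that reference rather than a purely finite-dimensional endpoint analysis.
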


\noindent They also prove a control of large  deviations of the largest eigenvalue under $\mathbb{P}_{T,B}^{N,\boldsymbol\epsilon}$.\\

\noindent We define the effective potential as  
\begin{equation}
\tilde{T}^{\boldsymbol\epsilon} (x)= \beta \int_B\log |x-y|  d\mu_{T}^{\boldsymbol\epsilon}(y) + \int_B T(x,y) d\mu_{T}^{\boldsymbol\epsilon}(y)  - C_{\boldsymbol\epsilon,h}  \ \ \ on \ \  B_h .
\end{equation}

\begin{lem}\label{GDP}
Let T satisfy the  conditions of the previous theorem. Then for any closed $F \subset B\setminus A_T^{\boldsymbol\epsilon}$  and open $O \subset B\setminus A_T^{\boldsymbol\epsilon}$  we have

\begin{equation*}
\left\{
\begin{split}
\limsup \frac{1}{N}\log \mathbb{P}_{T,B}^{N,\boldsymbol\epsilon} ( \exists i \  \lambda_i \in F) \leq &  \sup_{x \in F} \tilde{T}^{\boldsymbol\epsilon}(x) . \\
\liminf \frac{1}{N}\log \mathbb{P}_{T,B}^{N,\boldsymbol\epsilon} ( \exists i \ \lambda_i \in O) \geq &  \sup_{x \in O} \tilde{T}^{\boldsymbol\epsilon}(x) .
\end{split}
\right.
\end{equation*}
\end{lem}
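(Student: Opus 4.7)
The plan is to follow the standard large deviations argument for eigenvalues outside the support of the equilibrium measure: estimate the one-point marginal through a ratio of partition functions, then apply a union bound over particles. For the upper bound, I would first use symmetry among particles within each cut to write
\begin{equation*}
\mathbb{P}_{T,B}^{N,\boldsymbol\epsilon}\!\left(\exists i\ \lambda_i \in F\right) \;\leq\; \sum_{h=0}^g N_h \int_F \rho_{N,h}(x)\,dx,
\end{equation*}
where $\rho_{N,h}$ is the marginal density of $\lambda_{h,1}$. Fixing $\lambda_{h,1}=x$ and integrating out the remaining $N-1$ variables factorises
\begin{equation*}
\rho_{N,h}(x) \;=\; e^{-T(x,x)/2}\, \frac{Z_{T,B}^{N-1,\boldsymbol\epsilon^{(h)}}}{Z_{T,B}^{N,\boldsymbol\epsilon}}\, \mathbb{E}_{T,B}^{N-1,\boldsymbol\epsilon^{(h)}}\!\Bigl[ \prod_{(h',j)} |x-\lambda_{h',j}|^{\beta}\, e^{-\sum_{h',j} T(x,\lambda_{h',j})} \Bigr],
\end{equation*}
where $\boldsymbol\epsilon^{(h)}$ denotes the filling fractions of the $(N-1)$-particle model obtained by removing one particle from cut $h$.

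Both nontrivial factors are estimated at the precision $o(N)$. Since $F$ is compact and bounded away from $A_T^{\boldsymbol\epsilon}$, the kernel $(x,y)\mapsto \beta\log|x-y|$ is bounded and continuous on $F\times V$ for any small neighbourhood $V$ of $A_T^{\boldsymbol\epsilon}$, so exponential concentration of the empirical measure $L_{N-1}$ around $\mu_T^{\boldsymbol\epsilon}$ (at speed $N^{2}$, by the large deviation principle on $\mathcal{M}_1^{\boldsymbol\epsilon^{(h)}}(B)$ whose rate function is built from (\ref{energyII})) produces the upper bound $\exp\bigl(N\!\int (\beta \log|x-y|-T(x,y))\,d\mu_T^{\boldsymbol\epsilon}(y) + o(N)\bigr)$ for the expectation, uniformly in $x\in F$. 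The partition function ratio is then handled by the $1/N$ expansion $\log Z_{T,B}^{N,\boldsymbol\epsilon} = -N^{2}E(\mu_T^{\boldsymbol\epsilon}) + O(N)$ proved in \cite{BGK}: the linear-in-$N$ correction is identified, via the variational characterisation (\ref{caracterisation}), with the Lagrange multiplier $C_{\boldsymbol\epsilon,h}$ attached to cut $h$. Combining the two estimates yields the uniform pointwise bound $\rho_{N,h}(x)\leq \exp(N\tilde{T}^{\boldsymbol\epsilon}(x)+o(N))$ on $F$, and the claimed upper bound follows by integrating over $F$ (which has finite Lebesgue measure) and injecting in the union bound.

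The lower bound is obtained symmetrically. Given $\eta>0$, pick $x_{0}\in O$ with $\tilde{T}^{\boldsymbol\epsilon}(x_{0})\geq \sup_{O}\tilde{T}^{\boldsymbol\epsilon}-\eta$ and $\delta>0$ so small that $B(x_{0},\delta)\subset O\cap B_{h_{0}}$ for some $h_{0}$; then $\mathbb{P}(\exists i\ \lambda_i \in O)\geq \int_{B(x_{0},\delta)}\rho_{N,h_{0}}(x)\,dx$, and the matching lower estimate on $\rho_{N,h_{0}}(x)$ comes from restricting the expectation to a small Wasserstein neighbourhood of $\mu_T^{\boldsymbol\epsilon}$ (whose probability is $e^{o(N^{2})}$ by the LDP lower bound), combined with the same partition function asymptotics; one then sends $\eta,\delta\to 0$. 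The main obstacle is precisely the extraction of the $O(N)$ correction to $\log Z_{T,B}^{N,\boldsymbol\epsilon}$ and its identification with the chemical potentials $C_{\boldsymbol\epsilon,h}$, which is the content of the free energy expansion of \cite{BGK} to which the lemma is attributed; granted that input, the concentration of $L_{N-1}$ and the structure of the one-point function make the rest of the argument routine.
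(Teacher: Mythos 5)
The paper itself offers no proof of this lemma: it is quoted from \cite{BGK} (``they also prove a control of large deviations\dots''), so your proposal can only be measured against the standard argument. Your skeleton --- union bound over particles, one-point marginal written as a partition-function ratio times an $(N-1)$-particle expectation, concentration of the empirical measure, effective potential $\tilde T^{\boldsymbol\epsilon}$ --- is indeed that argument. Two of its steps, however, do not hold as you wrote them.

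First, the partition-function ratio. Knowing $\log Z_{T,B}^{N,\boldsymbol\epsilon}=-N^{2}E(\mu_T^{\boldsymbol\epsilon})+O(N)$ is not enough: the unidentified $O(N)$ terms in numerator and denominator are exactly of the order of the quantity $N\tilde T^{\boldsymbol\epsilon}(x)$ you are extracting, and the linear-in-$N$ correction to the free energy is \emph{not} the Lagrange multiplier $C_{\boldsymbol\epsilon,h}$ (it is an entropy-type constant). What actually produces $C_{\boldsymbol\epsilon,h}$ is the combination of the shift $N\to N-1$ with the $O(1/N)$ change of filling fractions acting on the leading term $-N^{2}E(\mu_T^{\boldsymbol\epsilon})$, whose $\boldsymbol\epsilon$-derivative is expressed through the constants of (\ref{caracterisation}); to run your route rigorously you would need an expansion $-N^{2}E+NF_{1}(\boldsymbol\epsilon)+o(N)$ with $F_{1}$ Lipschitz in $\boldsymbol\epsilon$, i.e.\ strictly more than what you invoke. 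The standard, self-contained fix avoids the free-energy expansion altogether: write $Z_{T,B}^{N,\boldsymbol\epsilon}/Z_{T,B}^{N-1,\boldsymbol\epsilon^{(h)}}$ itself as the $(N-1)$-particle expectation of $\int_{B_h}\exp\bigl(\beta\sum_j\log|x-\lambda_j|-\sum_jT(x,\lambda_j)-\tfrac12T(x,x)\bigr)dx$, apply the same concentration estimate, and use (\ref{caracterisation}) (equality on the support, inequality off it) to identify the exponential rate with $C_{\boldsymbol\epsilon,h}$ up to $e^{o(N)}$.

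Second, the lower bound. Restricting the expectation to a Wasserstein neighbourhood of $\mu_T^{\boldsymbol\epsilon}$ does not bound the integrand from below: the map $\nu\mapsto\int\log|x_0-y|\,d\nu(y)$ is neither bounded nor continuous in that topology, and a single eigenvalue at distance $e^{-N^{2}}$ from $x_0$ is compatible with Wasserstein-closeness while making $\beta\sum_j\log|x_0-\lambda_j|$ more negative than $-N^{2}$. (For the upper bound this issue is absent, since $\log|x_0-y|$ is bounded above and may be truncated at level $\log\delta$ with $\delta<d(F,A_T^{\boldsymbol\epsilon})$ without changing its $\mu_T^{\boldsymbol\epsilon}$-integral; note that your appeal to continuity ``on $F\times V$'' tacitly confines the other eigenvalues to $V$, which is essentially the statement being proved, so the truncation is the clean way out.) To repair the lower bound you must intersect with the event that no eigenvalue lies within a fixed $\delta'$ of $x_0$ and show this event is not exponentially costly --- e.g.\ by first establishing the upper bound and using that the effective potential is negative off the support under the standing hypotheses (cf.\ Remark \ref{controle}), as in \cite{AGZ} or \cite{BGK}. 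As written, this half of your argument has a genuine, though standard and fixable, gap.
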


\bigskip
\bigskip

\noindent We consider a potential $V$ on $\mathcal A$ satifying Hypothethis \ref{hypo} and  the potentials $T_0(x,y) = - (V(x) + V(y))$  and $T_1(x,y) = - (\tilde{V}^{\boldsymbol\epsilon} (x) + \tilde{V}^{\boldsymbol\epsilon} (y) + W(x,y))$ where 

\begin{equation*}
W(x,y)= 
\left\lbrace
\begin{split} 
\beta \log(x-y) & \ \ \  if \ \  x\in U_h \ , \ y\in U_{h'}  \ \ h> h'  \\
\beta \log(y-x) & \ \ \   if \ \  x\in U_h \ , \ y\in U_{h'} \ \  h< h' \\
0\ \ \ \  \  & \ \ \   if \ \  x\in U_h \ , \ y\in U_{h} 
\end{split} 
\right.
\end{equation*}

\noindent and
\begin{equation*}
\tilde{V}^{\boldsymbol\epsilon} (x) = V(x) -\int W(x,y) d\mu_{V}^{\boldsymbol\epsilon}(y) .
\end{equation*}

\bigskip
\noindent The key point is that $d \mathbb{P}_{T_1,B}^{N,\boldsymbol\epsilon}$ is a product measure as the interaction between cuts has been removed. Moreover, we can check by the characterization (\ref{caracterisation}) that 

\begin{equation*}
\mu_{V}^{\boldsymbol\epsilon} = \mu_{T_0}^{\boldsymbol\epsilon} = \mu_{T_1}^{\boldsymbol\epsilon} .
\end{equation*}

\noindent We now consider 
\begin{equation}\label{potentiel}
T_t = (1-t) T_0 + t \ T_1  \ \ , \ \ t\in [0;1]  .
\end{equation}

\noindent Still by (\ref{caracterisation}) we can check that for all $t \in [0;1]$ we have:
\begin{equation*}
\mu_{T_t}^{\boldsymbol\epsilon} =(1-t) \mu_{T_0}^{\boldsymbol\epsilon} + t \mu_{T_1}^{\boldsymbol\epsilon}= \mu_{V}^{\boldsymbol\epsilon}.
\end{equation*}
\\
\begin{rem}
Note that, by Lemma 2.2, for $\boldsymbol\epsilon$ in a small neighbourhood of  $\boldsymbol\epsilon_\star$ (that we will denote $\tilde{\mathcal{E}}$) the support $A^{\boldsymbol\epsilon}$ of $\mu_{T_t}^{\boldsymbol\epsilon} =\mu_{V}^{\boldsymbol\epsilon}$ has $g+1$ cut  and we can write
\begin{equation}
d\mu_{T_t}^{\boldsymbol\epsilon}=d\mu_{V}^{\boldsymbol\epsilon}= S^{\boldsymbol\epsilon}(x)\prod_{ h=0}^g  \sqrt{\lvert x - \alpha^{\boldsymbol\epsilon}_{h,-} \rvert \lvert x - \alpha^{\boldsymbol\epsilon}_{h,+} \rvert}   dx \ ,
\end{equation}

\noindent with $S^{\boldsymbol\epsilon}$ positive on $A^{\boldsymbol\epsilon}$.
\end{rem}

\begin{rem}\label{controle}
Note also that by the last point of Hypothesis \ref{hypo} and by Lemma 2.2, if we fix a closed interval $F \subset B\setminus A$, then   for $\boldsymbol\epsilon$ close enough to $\boldsymbol\epsilon_\star$ and all $t \in [0;1]$ ,  $\tilde{T_t}^{\boldsymbol\epsilon} < 0 $ on $F$.

\end{rem}
\noindent The goal is to build first an approximate transport map   between the measures $d \mathbb{P}_{T_t,B}^{N,\boldsymbol\epsilon}$ for a fixed $\boldsymbol\epsilon$ in $\tilde{\mathcal{E}}$ i.e find a map $X_1^{N,\boldsymbol\epsilon}$ that satisfies for all $f: \mathbb{R}^N \longrightarrow \mathbb{R}$  bounded measurable function \\

\begin{equation}\label{almosttransp}
\lvert \int f(X_1^{N,\boldsymbol\epsilon})\ d \mathbb{P}_{V,B}^{N,\boldsymbol\epsilon} - \int f \  d \mathbb{P}_{T_1,B}^{N,\boldsymbol\epsilon} \rvert \  \leq C \norm{f}_{\infty}  \frac{(\log N)^3}{N} .
\end{equation}

\noindent We will see that we can   build a  transport map depending smoothly on $\epsilon$ and show universality  in the fixed filling model. We will then use this result to prove universality in the original model.\\

\begin{prop}\label{theoremfixed}
Assume that $V$ satisfies Hypothesis \ref{hypo}, and that $T_t$ is  as defined previously. Let $\mathbf{N}=(N_0,\cdots,N_g)$ such that  $\boldsymbol\epsilon = \mathbf{N}/N$ is in $\tilde{\mathcal{E}}$  and $\tilde{\mathbb{P}}_{T,B}^{N,\boldsymbol\epsilon}$ denote the distribution of the ordered eigenvalues under  $ \mathbb{P}_{T,B}^{N,\boldsymbol\epsilon}$.  Then for a constant C independent of ${\boldsymbol\epsilon}$ and $N$, and if for all  $0 \leq h \leq g$ $f_h: \mathbb{R}^m\longrightarrow \mathbb{R}$ is Lipschitz supported inside $[-M , M]^{m}$  we have:
\begin{enumerate}
\item Eigenvalue gaps in the Bulk \\

\begin{equation*}
\begin{split}
\Bigg|  \int \prod_{0 \leq h \leq g} f_h\big(N(\lambda_{h,i_h+1}-& \lambda_{h,i_h}),  \cdots,N(\lambda_{h,i_h+m} - \lambda_{h,i_h})\big)d\tilde{\mathbb{P}}_{V,B}^{N,\boldsymbol\epsilon} \\
   -  & \int \prod_{0 \leq h \leq g} f_h\big(N (\lambda_{h,i_h+1}-\lambda_{h,i_h}),\cdots,N (\lambda_{h,i_h+m} - \lambda_{h,i_h})\big)d\tilde{\mathbb{P}}_{T_1,B}^{N,\boldsymbol\epsilon} \Bigg| \\
  \leq & C \frac{(\log N)^3}{N}\norm{f}_\infty + C(\sqrt{m}\frac{(\log N)^2}{N^{1/2}} + M \frac{(\log N)}{N^{1/2}} ) \norm{\nabla f}_\infty 
\end{split}
\end{equation*}

\item Eigenvalue gaps at the Edge

\begin{equation*}
\begin{split}
\Bigg|  \int \prod_{0 \leq h \leq g} f_h\big(N^{2/3}(\lambda_{h,1}- & \alpha_{h,-}^{\boldsymbol\epsilon}),  \cdots,N^{2/3}(\lambda_{h,m} - \alpha_{h,-}^{\boldsymbol\epsilon})\big)d\tilde{\mathbb{P}}_{V,{B}}^{N,\boldsymbol\epsilon} \\
   -  & \int \prod_{0 \leq h \leq g} f_h\big(N^{2/3} (\lambda_{h,1}-\alpha_{h,-}^{\boldsymbol\epsilon}),\cdots,N^{2/3} (\lambda_{h,m} - \alpha_{h,-}^{\boldsymbol\epsilon})\big)d\tilde{\mathbb{P}}_{T_1,B}^{N,\boldsymbol\epsilon} \Bigg| \\
  \leq & C \frac{(\log N)^3}{N}\norm{f}_\infty + C(\sqrt{m}\frac{(\log N)^2}{N^{5/6}}  +\frac{\log N}{N^{1/3}}) \norm{\nabla f}_\infty
\end{split}
\end{equation*}

\end{enumerate}

\noindent where we defined $f: \mathbb{R}^{m (g+1)}\longrightarrow \mathbb{R}$ by $f(\boldsymbol{x}_{0},\cdots,\boldsymbol{x}_{g}) = \prod_{0 \leq h \leq g} f_h(\boldsymbol{x}_h)$.
\end{prop}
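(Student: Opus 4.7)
The strategy is to construct an approximate transport map $X_1^{N,\boldsymbol\epsilon}$ from $\mathbb{P}_{V,B}^{N,\boldsymbol\epsilon}$ to $\mathbb{P}_{T_1,B}^{N,\boldsymbol\epsilon}$ satisfying the estimate (\ref{almosttransp}), and then to extract the bulk and edge bounds by a Lipschitz comparison. The crucial structural input is the identity $\mu_{T_t}^{\boldsymbol\epsilon} = \mu_V^{\boldsymbol\epsilon}$ already verified above: the limiting equilibrium measure is constant along the interpolation (\ref{potentiel}), so the leading-$N$ obstruction in the transport method of \cite{BFG} vanishes and $X_1^{N,\boldsymbol\epsilon}$ can be sought $O(1/N)$-close to the identity at classical locations.

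For the construction I would follow \cite{BFG}, adapting it to the fixed filling fraction model of \cite{BGK}. Define $X_t^{N,\boldsymbol\epsilon}$ as the flow of a vector field $Y_t^{N,\boldsymbol\epsilon}$ acting coordinatewise and expand
\[\partial_t \log\frac{d\big((X_t^{N,\boldsymbol\epsilon})_\star \mathbb{P}_{V,B}^{N,\boldsymbol\epsilon}\big)}{d\,\mathbb{P}_{T_t,B}^{N,\boldsymbol\epsilon}}(\boldsymbol\lambda).\]
Using the explicit Hamiltonian of $\mathbb{P}_{T_t,B}^{N,\boldsymbol\epsilon}$, choosing $Y_t$ so as to cancel the leading part of this expression reduces to a linearised equation $\Xi^{\boldsymbol\epsilon}[Y_t] = G_t$ on each $A_h^{\boldsymbol\epsilon}$, where $\Xi^{\boldsymbol\epsilon}$ is the multi-cut master operator associated to $\mu_V^{\boldsymbol\epsilon}$ and the constants $C_{\boldsymbol\epsilon,h}$ of (\ref{caracterisation}), and $G_t$ is small precisely because $\partial_t \mu_{T_t}^{\boldsymbol\epsilon} = 0$. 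Inverting $\Xi^{\boldsymbol\epsilon}$ in the class of functions smooth up to the soft edges $\alpha^{\boldsymbol\epsilon}_{h,\pm}$, uniformly in $\boldsymbol\epsilon \in \tilde{\mathcal{E}}$ via Lemma \ref{regularity}, yields such a $Y_t^{N,\boldsymbol\epsilon}$. Integrating the identity above over $t \in [0,1]$ and controlling the remainders through concentration of the empirical measure under $\mathbb{P}_{T_t,B}^{N,\boldsymbol\epsilon}$, together with the large-deviation bound of Lemma \ref{GDP} applied on $B \setminus A^{\boldsymbol\epsilon}$ (where $\tilde T_t^{\boldsymbol\epsilon} < 0$ by Remark \ref{controle}), produces (\ref{almosttransp}).

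Given $X_1^{N,\boldsymbol\epsilon}$, the two estimates follow by a Lipschitz comparison. The bound (\ref{almosttransp}) provides the $(\log N)^3 N^{-1}\|f\|_\infty$ term. For the $\|\nabla f\|_\infty$ contribution in the bulk, changing variables $\boldsymbol\lambda \mapsto X_1^{N,\boldsymbol\epsilon}(\boldsymbol\lambda)$ on one side and applying the mean value theorem bounds the residual by $\|\nabla f\|_\infty$ times the expected $\ell^2$-norm over the $m(g+1)$ indices of
\[\big| N\big[(X_1-\mathrm{id})(\lambda_{h,i_h+k}) - (X_1-\mathrm{id})(\lambda_{h,i_h})\big] \big|.\]
The compact support of $f$ restricts the relevant pairs to $\lambda_{h,i_h+k} - \lambda_{h,i_h} = O(M/N)$, while the bulk rigidity of the fixed filling fraction model ($|\lambda_{h,i} - E^{V,N}_i| = O((\log N)/N)$, from \cite{bourgade2012bulk, YLI}) combined with the $C^1$ smallness of $X_1 - \mathrm{id}$ furnished by the construction yields the $\sqrt m(\log N)^2 N^{-1/2}$ and $M(\log N) N^{-1/2}$ terms. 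The edge case is identical once bulk rigidity is replaced by edge rigidity $|\lambda_{h,k}-\alpha_{h,-}^{\boldsymbol\epsilon}| = O((\log N) N^{-2/3})$ and one uses that $X_1^{N,\boldsymbol\epsilon}$ is smooth up to each edge and fixes it to leading order. The hard part is the construction step: inverting $\Xi^{\boldsymbol\epsilon}$ on the right function space in the multi-cut regime and producing $Y_t^{N,\boldsymbol\epsilon}$ with uniform smoothness and correct edge behaviour across $\boldsymbol\epsilon \in \tilde{\mathcal{E}}$, while simultaneously controlling both the deterministic mismatch and the stochastic fluctuations of the linear statistic $\sum_i Y_t(\lambda_i)$ that together enter the $(\log N)^3/N$ error.
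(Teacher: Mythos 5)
Your overall architecture — flow of an approximate Monge--Amp\`ere solution, linearization to the master operator $\Xi$ exploiting $\mu_{T_t}^{\boldsymbol\epsilon}=\mu_V^{\boldsymbol\epsilon}$, inversion of $\Xi$ uniformly in $\boldsymbol\epsilon\in\tilde{\mathcal{E}}$, and control of the remainder by concentration to get (\ref{almosttransp}) — is the same as the paper's. The gap is in the last step, where you convert (\ref{almosttransp}) into the gap and edge estimates. You invoke optimal bulk rigidity $|\lambda_{h,i}-E^{V,N}_i|=O((\log N)/N)$ and edge rigidity at scale $(\log N)N^{-2/3}$ \emph{for the fixed filling fraction model}. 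These estimates are not established in this paper (the author explicitly notes that such rigidity ``could be done as in'' \cite{bourgade2012bulk}, \cite{YLI} but does not prove it), so your argument rests on a substantial unproved input; moreover it is not needed, and it is unclear how rigidity plus ``$C^1$ smallness of $X_1-\mathrm{id}$'' would actually produce the specific error terms $\sqrt m(\log N)^2N^{-1/2}$ and $M(\log N)N^{-1/2}$, since no deterministic $C^1$ bound on the flow is available here.

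The paper's mechanism avoids rigidity entirely. One writes $X_1^{N,\boldsymbol\epsilon}=\mathrm{Id}+\frac1N X^{N,\boldsymbol\epsilon,1}+\frac1{N^2}X^{N,\boldsymbol\epsilon,2}$ and proves, via the concentration estimate of Proposition \ref{concentration} (from \cite{BGK}, available for the constrained model) and loop-equation bounds, the moment estimates (\ref{controle1}) and the high-probability Lipschitz-in-the-gap bound (\ref{controleespace}): $|X^{N,\boldsymbol\epsilon,1}_{h,i}-X^{N,\boldsymbol\epsilon,1}_{h,j}|\leq C\sqrt N\log N\,|\lambda_{h,i}-\lambda_{h,j}|$. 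Replacing $X_1$ by $\hat X=\mathrm{Id}+\frac1N X^{N,\boldsymbol\epsilon,1}$ costs $\norm{\nabla f}_\infty N^{-2}\norm{X^{N,\boldsymbol\epsilon,2}}_{L^2}$, which after the scaling by $N$ (resp.\ $N^{2/3}$) and the composition with the ordering maps $R^h$ (whence the $\sqrt m$, using that $\hat X$ preserves the ordering with overwhelming probability — a point your write-up does not address, though it is needed to compare ordered statistics) gives the $\sqrt m(\log N)^2N^{-1/2}$ (resp.\ $N^{-5/6}$) terms. The remaining error is then, by (\ref{controleespace}), proportional to the gap itself, $(\lambda_{h,i_h+k}-\lambda_{h,i_h})\,O(\log N/\sqrt N)$, and on the support of $f$ the gaps are already $O(M/N)$ by compact support alone; at the edge one instead uses the $L^4$ bound $\norm{X^{N,\boldsymbol\epsilon,1}_{h,i}}_{L^4}\leq C\log N$ directly, giving $\log N/N^{1/3}$. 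So no location estimate on the eigenvalues is ever used. To repair your proof you should either supply a rigidity proof for the fixed filling fraction multi-cut model, or, better, replace that step by the decomposition of the flow and the gap-proportional bound (\ref{controleespace}) as above.
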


\noindent We  deduce the following corollary from the results obtained in the one-cut regime in \cite{BFG}, and from the fact that $\tilde{\mathbb{P}}_{T_1,B}^{N,\boldsymbol\epsilon} $ is a product measure.

\begin{cor}\label{corfixed} Assume the same hypothesis as in the precedent proposition. We write $\mu_{V}^{\boldsymbol\epsilon} = \sum_{0 \leq h \leq g} \epsilon_h \  \mu_{V}^{\boldsymbol\epsilon,h}$ where $\mu_{V}^{\boldsymbol\epsilon,h}$ has connected support .   For   some transport maps $\Phi^{\boldsymbol\epsilon,h }$  from $\mu_G$   to $\mu_{V}^{\boldsymbol\epsilon,h}$, \\
\begin{enumerate}
\item Eigenvalue gaps in the Bulk

\begin{equation*}
\begin{split}
\Bigg|  \int \prod_{0 \leq h \leq g}& f_h\big(N(\lambda_{h,i_h+1}- \lambda_{h,i_h}),  \cdots,N(\lambda_{h,i_h+m} - \lambda_{h,i_h})\big)d\tilde{\mathbb{P}}_{V,B}^{N,\boldsymbol\epsilon} \\
   -  & \prod_{0 \leq h \leq g} \Bigg( \int  f_h\big(N (\Phi^{\boldsymbol\epsilon,h })'(\lambda_{i_h})(\lambda_{i_h+1}-\lambda_{i_h}),\cdots,N (\Phi^{\boldsymbol\epsilon,h })'(\lambda_{i_h})(\lambda_{i_h+m} - \lambda_{i_h})\big)d\tilde{\mathbb{P}}_{G}^{N_h} \Bigg) \Bigg| \\
  \leq & C \frac{(\log N)^3}{N}\norm{f}_\infty + C(\sqrt{m}\frac{(\log N)^2}{N^{1/2}} + M \frac{(\log N)}{N^{1/2}} +\frac{M^2}{N} ) \norm{\nabla f}_\infty
\end{split}
\end{equation*}

\item Eigenvalue gaps  at the Edge
\begin{equation*}
\begin{split}
\Bigg|  \int & \prod_{0 \leq h \leq g}  f_h\big(N^{2/3}(\lambda_{h,1}-  \alpha_{h,-}^{\boldsymbol\epsilon}),  \cdots,N^{2/3}(\lambda_{h,m} - \alpha_{h,-}^{\boldsymbol\epsilon})\big)d\tilde{\mathbb{P}}_{V,B}^{N,\boldsymbol\epsilon} \\
   -  & \prod_{0 \leq h \leq g} \Bigg( \int  f_h\big(N^{2/3} (\Phi^{\boldsymbol\epsilon ,h })'(-2)(\lambda_{1}+2),\cdots,N^{2/3} (\Phi^{\boldsymbol\epsilon,h })'(-2)(\lambda_{m} + 2)\big)d\tilde{\mathbb{P}}_{G}^{N_h} \Bigg) \Bigg| \\
  \leq & C \frac{(\log N)^3}{N}\norm{f}_\infty + C(\sqrt{m}\frac{(\log N)^2}{N^{5/6}}  +\frac{\log N}{N^{1/3}} +  \frac{M^2}{N^{4/3}}) \norm{\nabla f}_\infty .
\end{split}
\end{equation*}

 \end{enumerate}
\end{cor}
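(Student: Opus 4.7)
The plan is to chain two comparisons: first invoke Proposition \ref{theoremfixed} to replace $\tilde{\mathbb{P}}_{V,B}^{N,\boldsymbol\epsilon}$ by $\tilde{\mathbb{P}}_{T_1,B}^{N,\boldsymbol\epsilon}$, and then exploit the product structure of the latter to reduce to $g+1$ independent one-cut problems for which a transport to the Gaussian $\beta$-ensemble has already been constructed in \cite{BFG}. Proposition \ref{theoremfixed} contributes exactly the $C(\log N)^3/N\,\norm{f}_\infty$ and $C(\sqrt{m}(\log N)^2/N^{1/2}+M(\log N)/N^{1/2})\norm{\nabla f}_\infty$ terms appearing in the bulk estimate, and the obvious edge analogues, so it remains only to evaluate the integrals against $\tilde{\mathbb{P}}_{T_1,B}^{N,\boldsymbol\epsilon}$.

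By construction of $T_1$, the inter-cut $\beta$-log interactions cancel with the $W$-contribution in $-\tfrac{1}{2}\sum T_1$, leaving
\begin{equation*}
d\tilde{\mathbb{P}}_{T_1,B}^{N,\boldsymbol\epsilon} = \bigotimes_{h=0}^{g} d\tilde{\mathbb{P}}_{\tilde V^{\boldsymbol\epsilon},B_h}^{N_h},
\end{equation*}
a product of $g+1$ one-cut $\beta$-ensembles whose $h$-th equilibrium measure is the connected-support measure $\mu_{V}^{\boldsymbol\epsilon,h}$, by Lemma \ref{regularity} together with $\boldsymbol\epsilon\in\tilde{\mathcal E}$. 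Since $f=\prod_h f_h$ also factorises, the integral splits into a product of one-cut integrals. To each factor I would apply the one-cut transport result of \cite{BFG}: an approximate map from $\tilde{\mathbb{P}}_{G}^{N_h}$ to $\tilde{\mathbb{P}}_{\tilde V^{\boldsymbol\epsilon},B_h}^{N_h}$ whose $i$-th coordinate equals $\Phi^{\boldsymbol\epsilon,h}(\lambda_i)$ up to an $O((\log N)/N)$ correction, where $\Phi^{\boldsymbol\epsilon,h}$ is the monotone transport $\mu_G\mapsto\mu_{V}^{\boldsymbol\epsilon,h}$, and with cost $C(\log N)^3/N$ on $L^\infty$ test functions. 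A Taylor expansion of $\Phi^{\boldsymbol\epsilon,h}$ about $\lambda_{i_h}$ then gives
\begin{equation*}
N\bigl(\Phi^{\boldsymbol\epsilon,h}(\lambda_{i_h+k})-\Phi^{\boldsymbol\epsilon,h}(\lambda_{i_h})\bigr) = N(\Phi^{\boldsymbol\epsilon,h})'(\lambda_{i_h})(\lambda_{i_h+k}-\lambda_{i_h}) + O\bigl(N(\lambda_{i_h+k}-\lambda_{i_h})^2\bigr),
\end{equation*}
and since $f_h$ is supported where the linearised gap is at most $M$, the quadratic remainder contributes at most $CM^2/N\,\norm{\nabla f_h}_\infty$ per cut. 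Telescoping the $g+1$ per-cut comparisons, each integrand being uniformly bounded by $\norm{f_h}_\infty$, yields the bulk estimate. The edge case proceeds identically after linearising $\Phi^{\boldsymbol\epsilon,h}$ at $-2$ with the $N^{2/3}$ rescaling, so that the quadratic remainder becomes $CM^2/N^{4/3}\,\norm{\nabla f_h}_\infty$, and the transport cost from \cite{BFG} specialises to the edge-adapted $(\log N)^2/N^{5/6}$ and $(\log N)/N^{1/3}$ orders.

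The principal obstacle is ensuring that the \cite{BFG} one-cut estimates hold with constants uniform in $\boldsymbol\epsilon\in\tilde{\mathcal E}$, which requires uniform regularity of $\tilde V^{\boldsymbol\epsilon}$, of the endpoints $\alpha_{h,\pm}^{\boldsymbol\epsilon}$ and the density $S^{\boldsymbol\epsilon}$, and uniform smoothness of $\Phi^{\boldsymbol\epsilon,h}$ together with its derivatives; this uniformity is exactly what Lemma \ref{regularity} supplies after restricting to a small neighbourhood of $\boldsymbol\epsilon_\star$. A secondary subtlety is that the Taylor remainder estimate calls on standard Gaussian-ensemble rigidity to control $\lambda_{i_h+k}-\lambda_{i_h}$, which is by now classical and absorbs into the $\log N$ factors already present.
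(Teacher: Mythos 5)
Your proposal is correct and follows essentially the same route as the paper: apply Proposition \ref{theoremfixed} to pass to $\tilde{\mathbb{P}}_{T_1,B}^{N,\boldsymbol\epsilon}$, factorise this product measure into $g+1$ one-cut ensembles, and invoke the one-cut comparison of \cite{BFG} (stated in the paper as Proposition \ref{thmonecut}, which already packages the transport-plus-linearisation step you sketch, including the $M^2/N$ and $M^2/N^{4/3}$ remainders). The only point to tidy is the normalisation of the one-cut factors: since the weight is $e^{-N\sum \tilde V^{\boldsymbol\epsilon}(\lambda_{h,i})}$ with only $N_h$ particles in $B_h$, the $h$-th factor is $\mathbb{P}^{N_h}_{\tilde V^{\boldsymbol\epsilon}/\epsilon_h,B_h}$, whose equilibrium measure is identified with $\mu_V^{\boldsymbol\epsilon,h}$ via the characterisation (\ref{caracterisation}).
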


\noindent The proof of the theorem will be similar to what has already been done in the one-cut case, one major difference being the inversion of the operator $\Xi$ introduced in Lemma 3.2 of \cite{BFG}.

\subsection{Approximate Monge Ampère Equation}

\noindent The analysis done in the one-cut regime suggests   to look at  the transport as  the flow of an approximate solution to the Monge Ampère equation $\mathbf{Y}_t^{N,\boldsymbol\epsilon} = ( \mathbf{Y}_{0,t}^{N,\boldsymbol\epsilon} , \cdots , \mathbf{Y}_{g,t}^{N,\boldsymbol\epsilon} ) :\mathbb{R}^N \longrightarrow \mathbb{R}^N$ where $\mathbf{Y}_{h,t}^{N,\boldsymbol\epsilon}:\mathbb{R}^N \longrightarrow \mathbb{R}^{N_h} $ solves the following equation:
\begin{equation}\label{approx}
\begin{split}
& \divergence { (\mathbf{Y}_t^{N,\boldsymbol\epsilon})} =  c_t^{N,\boldsymbol\epsilon} - \beta \sum_{h=0}^g  \sum_{1\leq i < j \leq N_h}  \frac{\mathbf{Y}^{N,\boldsymbol\epsilon}_{h,i,t} - \mathbf{Y}^{N,\boldsymbol\epsilon}_{h,j,t}}{\lambda_{h,i} - \lambda_{h,j}} - \beta \sum_{0\leq h < h' \leq g} \sum_{\substack{1\leq i \leq N_h \\ 1\leq j \leq N_{h'}}} \frac{\mathbf{Y}^{N,\boldsymbol\epsilon}_{h,i,t} - \mathbf{Y}^{N,\boldsymbol\epsilon}_{h',j,t}}{\lambda_{h,i} - \lambda_{h',j}}  \\
& -  {\sum_{0\leq h, h' \leq g} \sum_{\substack{1\leq i \leq N_h \\ 1\leq j \leq N_{h'}}} } (\partial_1 T(\lambda_{h,i}, \lambda_{h',j})\mathbf{Y}^{N,\boldsymbol\epsilon}_{h,i,t}  - \frac{1}{2} W(\lambda_{h,i}, \lambda_{h',j})) -N  \sum_{0\leq h \leq g} \sum_{{1\leq i \leq N_h }} \int W(\lambda_{h,i},z)d\mu_{V}^{\boldsymbol\epsilon}(z) 
\end{split}
\end{equation}

\noindent where
\begin{equation*}
\begin{split}
c_t^{N,\boldsymbol\epsilon} & =   \int \Bigg( N  \sum_{0\leq h \leq g} \sum_{{1\leq i \leq N_h }} \int W(\lambda_{h,i},z)d\mu_{V}^{\boldsymbol\epsilon}(z) - \frac{1}{2} \sum_{0\leq h, h' \leq g} \sum_{\substack{1\leq i \leq N_h \\ 1\leq j \leq N_{h'}}} W(\lambda_{h,i} ,\lambda_{h',j})  \ \Bigg) d \mathbb{P}_{T_t,B}^{N,\boldsymbol\epsilon} (\boldsymbol\lambda) \\
& = \partial_t \log ( Z_{V_t,B}^{N,\boldsymbol\epsilon}).
\end{split}
\end{equation*}
\noindent Let $\mathcal{R}_t^{N,\boldsymbol\epsilon}(\mathbf{Y}^{N,\boldsymbol\epsilon})$ the error term defined as
\begin{equation}
\begin{split}
& \mathcal{R}_t^{N,\boldsymbol\epsilon}(\mathbf{Y}^{N,\boldsymbol\epsilon})=    \beta \sum_{h=0}^g  \sum_{1\leq i < j \leq N_h}  \frac{\mathbf{Y}^{N,\boldsymbol\epsilon}_{h,i,t} - \mathbf{Y}^{N,\boldsymbol\epsilon}_{h,j,t}}{\lambda_{h,i} - \lambda_{h,j}} + \beta \sum_{0\leq h < h' \leq g} \sum_{\substack{1\leq i \leq N_h \\ 1\leq j \leq N_{h'}}} \frac{\mathbf{Y}^{N,\boldsymbol\epsilon}_{h,i,t} - \mathbf{Y}^{N,\boldsymbol\epsilon}_{h',j,t}}{\lambda_{h,i} - \lambda_{h',j}}  \\
& + {\sum_{0\leq h, h' \leq g} \sum_{\substack{1\leq i \leq N_h \\ 1\leq j \leq N_{h'}}} } (\partial_1 T(\lambda_{h,i}, \lambda_{h',j})\mathbf{Y}^{N,\boldsymbol\epsilon}_{h,i,t} - \frac{1}{2} W(\lambda_{h,i}, \lambda_{h',j})) + N  \sum_{0\leq h \leq g} \sum_{{1\leq i \leq N_h }} \int W(\lambda_{h,i},z)d\mu_{V}^{\boldsymbol\epsilon}(z) \\
&  + \divergence { (\mathbf{Y}_t^{N,\boldsymbol\epsilon})} - c_t^{N,\boldsymbol\epsilon} .
\end{split}
\end{equation}

\noindent We have the following stability lemma
\begin{lem}\label{lemmatransp}
Let  $\mathbf{Y}_t^{N,\boldsymbol\epsilon} :\mathbb{R}^N \longrightarrow \mathbb{R}^N$ be a smooth vector field and let $X^{N,\boldsymbol\epsilon}$ be its flow: 
\begin{equation}
\dot{X}_t^{N,\boldsymbol\epsilon} = \mathbf{Y}_t^{N,\boldsymbol\epsilon} (X_t^{N,\boldsymbol\epsilon})  \ \ \ \ X_0^{N,\boldsymbol\epsilon} =  Id .
\end{equation}

\noindent Assume that  $\mathbf{Y}_t^{N,\boldsymbol\epsilon}$ vanishes on the boundary of $\mathbf{B}$.

\noindent Let $f: \mathbb{R}^N \longrightarrow \mathbb{R}$ be a bounded measurable function. Then
\begin{equation*}
\Big| \int f(X_t^{N,\boldsymbol\epsilon})\ d \mathbb{P}_{V,B}^{N,\boldsymbol\epsilon} - \int f \  d \mathbb{P}_{T_t,B}^{N,\boldsymbol\epsilon} \Big|  \leq \norm{f}_{\infty}  \int_0^t \norm{\mathcal{R}_s^{N,\boldsymbol\epsilon}(\mathbf{Y}^{N,\boldsymbol\epsilon})}_{L^1( \mathbb{P}_{T_s,B}^{N,\boldsymbol\epsilon})} ds .
\end{equation*}
\end{lem}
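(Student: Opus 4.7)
The plan is to convert the problem into an $L^1$ comparison of Lebesgue densities on $\mathbf B$ by pulling $\mathbb{P}_{T_t,B}^{N,\boldsymbol\epsilon}$ back through the flow $X_t^{N,\boldsymbol\epsilon}$. Since $\mathbf Y_t^{N,\boldsymbol\epsilon}$ vanishes on $\partial\mathbf B$, the flow is a smooth diffeomorphism of $\mathbf B$ onto itself for every $t\in[0;1]$. Writing $p_V$ and $p_{T_t}$ for the Lebesgue densities of $\mathbb{P}_{V,B}^{N,\boldsymbol\epsilon}$ and $\mathbb{P}_{T_t,B}^{N,\boldsymbol\epsilon}$ and setting
\[
\tilde p_t(\boldsymbol\mu) := p_{T_t}\!\bigl(X_t^{N,\boldsymbol\epsilon}(\boldsymbol\mu)\bigr)\,\bigl|\det DX_t^{N,\boldsymbol\epsilon}(\boldsymbol\mu)\bigr|,
\]
the change of variable $\boldsymbol\lambda = X_t^{N,\boldsymbol\epsilon}(\boldsymbol\mu)$ gives $\int f\,d\mathbb{P}_{T_t,B}^{N,\boldsymbol\epsilon} = \int f(X_t^{N,\boldsymbol\epsilon})\,\tilde p_t\,d\boldsymbol\mu$, so
\[
\Bigl|\int f(X_t^{N,\boldsymbol\epsilon})\,d\mathbb{P}_{V,B}^{N,\boldsymbol\epsilon} - \int f\,d\mathbb{P}_{T_t,B}^{N,\boldsymbol\epsilon}\Bigr| \le \|f\|_\infty\,\|\tilde p_t - p_V\|_{L^1(d\boldsymbol\mu)}.
\]
At $t=0$, $X_0^{N,\boldsymbol\epsilon}=\mathrm{Id}$ and $\mathbb{P}_{T_0,B}^{N,\boldsymbol\epsilon}=\mathbb{P}_{V,B}^{N,\boldsymbol\epsilon}$, so $\tilde p_0=p_V$, and the task reduces to controlling $\|\tilde p_t-p_V\|_{L^1(d\boldsymbol\mu)}$ in terms of $\mathcal R_s^{N,\boldsymbol\epsilon}(\mathbf Y^{N,\boldsymbol\epsilon})$.

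The main step is to show that $\partial_t\log\tilde p_t(\boldsymbol\mu)$ coincides with $\mathcal R_t^{N,\boldsymbol\epsilon}(\mathbf Y^{N,\boldsymbol\epsilon})$ evaluated at $X_t^{N,\boldsymbol\epsilon}(\boldsymbol\mu)$. From $\dot X_t^{N,\boldsymbol\epsilon}=\mathbf Y_t^{N,\boldsymbol\epsilon}(X_t^{N,\boldsymbol\epsilon})$ and the Jacobi formula $\partial_t\log|\det DX_t^{N,\boldsymbol\epsilon}|(\boldsymbol\mu)=\divergence(\mathbf Y_t^{N,\boldsymbol\epsilon})(X_t^{N,\boldsymbol\epsilon}(\boldsymbol\mu))$ (consequence of $\partial_tDX_t^{N,\boldsymbol\epsilon}=D\mathbf Y_t^{N,\boldsymbol\epsilon}(X_t^{N,\boldsymbol\epsilon})\cdot DX_t^{N,\boldsymbol\epsilon}$), one obtains
\[
\partial_t\log\tilde p_t(\boldsymbol\mu) = \bigl(\partial_t\log p_{T_t} + \mathbf Y_t^{N,\boldsymbol\epsilon}\!\cdot\!\nabla\log p_{T_t} + \divergence\mathbf Y_t^{N,\boldsymbol\epsilon}\bigr)\bigl(X_t^{N,\boldsymbol\epsilon}(\boldsymbol\mu)\bigr).
\]
I would then substitute $\log p_{T_t}=\beta\sum\log|\lambda-\lambda'|-\tfrac12\sum T_t-\log Z_{T_t,B}^{N,\boldsymbol\epsilon}$ and expand each piece. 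The term $\mathbf Y\!\cdot\!\nabla\log\prod|\lambda-\lambda'|^\beta$, rewritten through the antisymmetry $\frac{Y_i-Y_j}{\lambda_i-\lambda_j}$, produces the two Vandermonde double sums of $\mathcal R_t$ (within-cut and cross-cut). The term $\mathbf Y\!\cdot\!\nabla(-\tfrac12\sum T_t)$ produces the $\sum\partial_1T\cdot Y$ contribution via the symmetry of $T$. The derivative $\partial_t(-\tfrac12\sum T_t)$, using $\partial_tT_t=T_1-T_0=\int W(\cdot,z)d\mu_V^{\boldsymbol\epsilon}+\int W(\cdot,z)d\mu_V^{\boldsymbol\epsilon}-W$, gives both the $\tfrac12\sum W$ and the $N\sum\int W\,d\mu_V^{\boldsymbol\epsilon}$ contributions. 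Finally, $\divergence\mathbf Y_t^{N,\boldsymbol\epsilon}$ together with $-\partial_t\log Z_{T_t,B}^{N,\boldsymbol\epsilon}$ combine into $\divergence\mathbf Y-c_t^{N,\boldsymbol\epsilon}$ by the very definition of $c_t^{N,\boldsymbol\epsilon}$. Matching all of these against the definition of $\mathcal R_t^{N,\boldsymbol\epsilon}$ is the only real step of the proof; it is routine algebra, but the main obstacle is to track carefully the factors of $\tfrac12$, the symmetry of $T$, and the correct antisymmetric split of pair sums into same-cut and cross-cut contributions.

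Once the identity $\partial_t\tilde p_t = \mathcal R_t^{N,\boldsymbol\epsilon}(\mathbf Y^{N,\boldsymbol\epsilon})(X_t^{N,\boldsymbol\epsilon})\,\tilde p_t$ is established, integrating in $t$ from $\tilde p_0=p_V$ and taking absolute values gives the pointwise bound
\[
|\tilde p_t - p_V|(\boldsymbol\mu) \le \int_0^t \bigl|\mathcal R_s^{N,\boldsymbol\epsilon}(\mathbf Y^{N,\boldsymbol\epsilon})\bigl(X_s^{N,\boldsymbol\epsilon}(\boldsymbol\mu)\bigr)\bigr|\,\tilde p_s(\boldsymbol\mu)\,ds.
\]
Integrating in $\boldsymbol\mu$ and applying Fubini, then for each fixed $s$ reverting the change of variable $\boldsymbol\lambda=X_s^{N,\boldsymbol\epsilon}(\boldsymbol\mu)$ (which sends $\tilde p_s(\boldsymbol\mu)\,d\boldsymbol\mu$ back to $d\mathbb{P}_{T_s,B}^{N,\boldsymbol\epsilon}(\boldsymbol\lambda)$), one obtains
\[
\|\tilde p_t-p_V\|_{L^1(d\boldsymbol\mu)} \le \int_0^t\|\mathcal R_s^{N,\boldsymbol\epsilon}(\mathbf Y^{N,\boldsymbol\epsilon})\|_{L^1(\mathbb{P}_{T_s,B}^{N,\boldsymbol\epsilon})}\,ds,
\]
which combined with the display of the first paragraph is exactly the claim. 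The only minor care needed concerns the singular set $\{\lambda_{h,i}=\lambda_{h,j}\}$: it has zero Lebesgue measure and both densities vanish there thanks to the Vandermonde factor, so the smooth calculus used above is legitimate almost everywhere under both measures.
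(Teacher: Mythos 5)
Your proposal is correct and follows essentially the same route as the paper: pull $\mathbb{P}_{T_t,B}^{N,\boldsymbol\epsilon}$ back through the flow, use the change of variables together with Jacobi's formula $\partial_t(JX_t^{N,\boldsymbol\epsilon})=\divergence(\mathbf{Y}_t^{N,\boldsymbol\epsilon})\,JX_t^{N,\boldsymbol\epsilon}$, identify the time derivative of the transported density with $\mathcal{R}_t^{N,\boldsymbol\epsilon}(\mathbf{Y}^{N,\boldsymbol\epsilon})$ evaluated along the flow, and integrate in $t$ before undoing the change of variables. Your term-by-term matching of $\partial_t\log\rho_t+\mathbf{Y}\cdot\nabla\log\rho_t+\divergence\mathbf{Y}$ with the definition of $\mathcal{R}_t^{N,\boldsymbol\epsilon}$ is exactly the computation the paper performs in one display, so there is nothing essentially different to compare.
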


\begin{proof}

Let
\begin{equation*}
\begin{split}
\rho_t (\boldsymbol{\lambda}):=  \frac{1}{Z_{T,B}^{N,\boldsymbol\epsilon}}   \ & \ \ \    \prod_{h=0}^g \prod_{1\leq i < j \leq N_h} \lvert \lambda_{h,i} - \lambda_{h,j} \rvert^\beta  \exp \Big( {- \frac{1}{2} \sum_{0\leq h, h' \leq g} \sum_{\substack{1\leq i \leq N_h \\ 1\leq j \leq N_{h'}}} T_t(\lambda_{i,h}, \lambda_{j,h'})} \Big) \\
 & \prod_{0\leq h < h' \leq g} \prod_{\substack{1\leq i \leq N_h \\ 1\leq j \leq N_{h'}}} \lvert \lambda_{h,i} - \lambda_{h',j} \rvert^\beta 
\end{split}
\end{equation*}

\noindent and $JX_t^{N,\boldsymbol\epsilon}$ denote the Jacobian of $X_t^{N,\boldsymbol\epsilon}$.  As $\mathbf{Y}_t^{N,\boldsymbol\epsilon}$ vanishes on the boundary of $\mathbf{B}$  , $X_{t}^{N,\boldsymbol\epsilon}(\mathbf{B}) = \mathbf{B}$.  By the change of variable formula we have 

\begin{equation*}
\begin{split}
\int f \  d \mathbb{P}_{T_t,B}^{N,\boldsymbol\epsilon} & =   \int_{\mathbf{B}} f(\boldsymbol{\lambda})\rho_t(\boldsymbol{\lambda}) \boldsymbol{d}\boldsymbol{\lambda}  = \int_{X_t^{N,\boldsymbol\epsilon}(\mathbf{B})} f(\boldsymbol{\lambda})\rho_t(\boldsymbol{\lambda}) \boldsymbol{d}\boldsymbol{\lambda}  \\
&= \int_{\mathbf{B}} f(X_t^{N,\boldsymbol\epsilon}) \rho_t(X_t^{N,\boldsymbol\epsilon}) JX_t^{N,\boldsymbol\epsilon} \boldsymbol{d}\boldsymbol{\lambda}
\end{split} .
\end{equation*}

\noindent Thus we have 
\begin{equation*}
\lvert \int f(X_t^{N,\boldsymbol\epsilon})\ d \mathbb{P}_{T_0,B}^{N,\boldsymbol\epsilon} - \int f \  d \mathbb{P}_{T_t,B}^{N,\boldsymbol\epsilon} \rvert \leq \norm{f}_{\infty} \int_{\mathbf{B}} \lvert \rho_0(\boldsymbol{\lambda}) - \rho_t(X_t^{N,\boldsymbol\epsilon}) JX_t^{N,\boldsymbol\epsilon} \rvert \boldsymbol{d}\boldsymbol{\lambda} .
\end{equation*}

\noindent Let 

\begin{equation*}
\Delta_t = \partial_t \int_{\mathbf{B}} \lvert \rho_0(\boldsymbol{\lambda}) - \rho_t(X_t^{N,\boldsymbol\epsilon}) JX_t^{N,\boldsymbol\epsilon} \rvert \boldsymbol{d}\boldsymbol{\lambda} .
\end{equation*}

\noindent  Using $\partial_t(JX_t^{N,\boldsymbol\epsilon}) = \divergence { (\mathbf{Y}_t^{N,\boldsymbol\epsilon})} JX_t^{N,\boldsymbol\epsilon}$ we have 

\begin{equation*}
\begin{split}
\Delta_t \leq & \int_{\mathbf{B}} \lvert \partial_t \left(JX_t^{N,\boldsymbol\epsilon}  \rho_t(X_t^{N,\boldsymbol\epsilon}) \right) \rvert \boldsymbol{d}\boldsymbol{\lambda}  \\
=& \int_{\mathbf{B}} \lvert \divergence { (\mathbf{Y}_t^{N,\boldsymbol\epsilon})}  JX_t^{N,\boldsymbol\epsilon}  \rho_t(X_t^{N,\boldsymbol\epsilon}) +  JX_t^{N,\boldsymbol\epsilon}  (\partial_t\rho_t)(X_t^{N,\boldsymbol\epsilon}) +  JX_t^{N,\boldsymbol\epsilon}  \nabla\rho_t(X_t^{N,\boldsymbol\epsilon})\dot{X}_t^{N,\boldsymbol\epsilon}   \rvert \boldsymbol{d}\boldsymbol{\lambda} \\
=& \int \lvert \mathcal{R}_t^{N,\boldsymbol\epsilon}(\mathbf{Y}^{N,\boldsymbol\epsilon}) \rvert d \mathbb{P}_{T_t,B}^{N,\boldsymbol\epsilon}
\end{split}
\end{equation*}

\noindent and this gives the lemma. \\
\end{proof}

\subsection{Constructing an Approximate Solution}

\noindent The construction of the approximate solution will be very similar to Section 3 of \cite{BFG}.\\

\noindent We fix $t \in [0;1]$ ,  $\mathbf{N}=(N_0,\cdots,N_g) \in \mathbb{N}^{g+1}$ such that $\sum_{h=0}^{g} N_h = N$ and set $\boldsymbol\epsilon = \mathbf{N}/N \in ]0;1[^{g+1}$ .\\

\noindent Let
\begin{equation*}
L_N =\frac{1}{N}  \sum_{h,i} \delta_{\lambda_{h,i}} \ \ ,  \  \ M_N = \sum_{h,i} \delta_{\lambda_{h,i}} - N \mu_{V}^{\boldsymbol\epsilon}  .
\end{equation*}

\noindent We look for a map $\mathbf{Y}_t^{N,\boldsymbol\epsilon} = (\mathbf{Y}_{0,1,t}^{N,\boldsymbol\epsilon} , \cdots , \mathbf{Y}_{g,N_g,t}^{N,\boldsymbol\epsilon} ) : \mathbb{R}^N \longrightarrow \mathbb{R}^N$ approximately solving (\ref{approx}). As in the one-cut regime, we make the following ansatz:

\begin{equation}\label{expansion}
\mathbf{Y}_{h,i,t}^{N,\boldsymbol\epsilon} (\boldsymbol{\lambda}) =  \frac{1}{N} \mathbf{y}_{1,t}^{\boldsymbol\epsilon} (\lambda_{h,i}) +  \frac{1}{N} \mathbf{\xi}_{t}^{\boldsymbol\epsilon} (\lambda_{h,i}, M_N)  \ , \ \mathbf{\xi}_{t}^{\boldsymbol\epsilon} (x, M_N) = \int \mathbf{z}_{t}^{\boldsymbol\epsilon} (x,y)dM_N(y)
\end{equation}

\noindent for some functions $\mathbf{y}_{1,t}^{\boldsymbol\epsilon} : \mathbb{R} \longrightarrow \mathbb{R}$ and $\mathbf{z}_{t}^{\boldsymbol\epsilon} : \mathbb{R}^2 \longrightarrow \mathbb{R}$.\\

\begin{prop}\label{erreur}
Let $V$ satisfy Hypothesis \ref{hypo} and  $T_t$ is as in (\ref{potentiel}) . Then there are  $\mathbf{y}_{1,t}^{\boldsymbol\epsilon}$ in $C^\infty(\mathbb{R})$ and $ \mathbf{z}_{t}^{\boldsymbol\epsilon}$ in $C^\infty(\mathbb{R}^2)$ such that for a constant C, for all $t\in [0;1]$ and $\boldsymbol\epsilon \in \tilde{\mathcal{E}}$: 
\begin{equation*}
\norm{\mathcal{R}_t^{N,\boldsymbol\epsilon}(\mathbf{Y}^{N,\boldsymbol\epsilon})}_{L^1( \mathbb{P}_{T_t,B}^{N,\boldsymbol\epsilon})} \leq C \frac{(\log N)^3}{N}.
\end{equation*}
\end{prop}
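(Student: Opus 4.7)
\textbf{The plan} is to follow the blueprint of Section 3 of \cite{BFG} adapted to the multi-cut, fixed-filling-fraction setting. I substitute the ansatz (\ref{expansion}) into $\mathcal{R}_t^{N,\boldsymbol\epsilon}(\mathbf{Y}^{N,\boldsymbol\epsilon})$ and rewrite each double sum via $\sum_{h,i}\delta_{\lambda_{h,i}} = N\mu_V^{\boldsymbol\epsilon} + M_N$. This decomposes the error into: (i) a deterministic $O(N)$ piece obtained by replacing $M_N$ by $0$; (ii) a linear-in-$M_N$ piece; (iii) a bilinear-in-$M_N$ piece; plus $O(1)$ corrections from symmetrizing $\sum_{i<j}$ to $\tfrac12\sum_{i\neq j}$ and isolating diagonal terms. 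I will choose $\mathbf{y}_{1,t}^{\boldsymbol\epsilon}$ so that (i) vanishes identically, and $\mathbf{z}_t^{\boldsymbol\epsilon}(\cdot,y)$ so that (ii) vanishes for every fixed $y$; then only (iii) and the symmetrization remainders persist.

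\textbf{The master equations and inversion of $\Xi^{\boldsymbol\epsilon}$.} The vanishing of (i) is a functional equation on $A^{\boldsymbol\epsilon}$ of the form
\begin{equation*}
\Xi^{\boldsymbol\epsilon}[\mathbf{y}_{1,t}^{\boldsymbol\epsilon}](x) = F_t^{\boldsymbol\epsilon}(x), \qquad x\in A^{\boldsymbol\epsilon},
\end{equation*}
where $\Xi^{\boldsymbol\epsilon} f(x) := \beta\int \frac{f(x)-f(y)}{x-y}\,d\mu_V^{\boldsymbol\epsilon}(y) + f(x)\int \partial_1 T_t(x,y)\,d\mu_V^{\boldsymbol\epsilon}(y)$ is the linearized equilibrium operator from \cite{BFG}, and $F_t^{\boldsymbol\epsilon}$ is built from $W$ and $\int W\,d\mu_V^{\boldsymbol\epsilon}$. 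Analogously the vanishing of (ii) yields $\Xi^{\boldsymbol\epsilon}[\mathbf{z}_t^{\boldsymbol\epsilon}(\cdot,y)] = G_t^{\boldsymbol\epsilon}(\cdot,y)$ with source smooth in $y$. The novelty compared to \cite{BFG} is that $\Xi^{\boldsymbol\epsilon}$ now lives on a union of $g+1$ intervals and carries a $(g+1)$-dimensional cokernel generated by the derivatives of $\mu_V^{\boldsymbol\epsilon}$ with respect to filling fractions. The fixed-filling ensemble eliminates these zero modes exactly: because $\mu_V^{\boldsymbol\epsilon}$ is simultaneously the equilibrium measure of $T_0$ and $T_1$ under the constraint $\mu(B_h)=\epsilon_h$, the sources $F_t^{\boldsymbol\epsilon}$ and $G_t^{\boldsymbol\epsilon}$ automatically satisfy the $g+1$ orthogonality conditions, and $\Xi^{\boldsymbol\epsilon}$ can be inverted on each $A_h^{\boldsymbol\epsilon}$ by a Tricomi-type formula using the square-root density $S^{\boldsymbol\epsilon}$. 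Smoothness of the solutions in $\boldsymbol\epsilon\in\tilde{\mathcal{E}}$ follows from Lemma \ref{regularity}; a smooth cutoff then extends $\mathbf{y}_{1,t}^{\boldsymbol\epsilon}$ and $\mathbf{z}_t^{\boldsymbol\epsilon}$ to $\mathbb{R}$ (respectively $\mathbb{R}^2$) and makes them vanish near $\partial\mathbf{B}$, as required by Lemma \ref{lemmatransp}.

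\textbf{Controlling the residual in $L^1$.} After these choices $\mathcal{R}_t^{N,\boldsymbol\epsilon}(\mathbf{Y}^{N,\boldsymbol\epsilon})$ reduces to a smooth bilinear statistic of the form $\frac{1}{N}\int\!\!\int \phi_t^{\boldsymbol\epsilon}(x,y)\,dM_N(x)\,dM_N(y)$ with $\phi_t^{\boldsymbol\epsilon}\in C^\infty(B\times B)$, plus $O(1/N)$ diagonal remainders and a small contribution from the cutoff. Concentration of linear and bilinear statistics under $\mathbb{P}_{T_t,B}^{N,\boldsymbol\epsilon}$, as developed in \cite{BGK} and \cite{BGII} for the fixed-filling-fraction model (yielding in particular $\mathbb{E}\,|M_N(\phi)| = O((\log N)^{3/2})$ and $\mathbb{E}\,|\iint \phi\, dM_N dM_N| = O((\log N)^3)$ for smooth $\phi$), together with the large-deviation estimate of Lemma \ref{GDP} and Remark \ref{controle} to suppress eigenvalues outside $A$, yield the claimed $L^1$ bound $C(\log N)^3/N$, uniform in $t\in[0,1]$ and $\boldsymbol\epsilon\in\tilde{\mathcal{E}}$.

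\textbf{Main obstacle.} The hardest step is the multi-cut inversion of $\Xi^{\boldsymbol\epsilon}$: one must identify its cokernel explicitly, check that $F_t^{\boldsymbol\epsilon}$ and $G_t^{\boldsymbol\epsilon}(\cdot,y)$ project trivially onto it, and then build the inverse in a way that remains smooth jointly in $(x,y,t,\boldsymbol\epsilon)$ so that the smoothness needed by Corollary \ref{corfixed} is preserved. This cokernel analysis is precisely what the fixed-filling-fraction formulation was designed to handle, and is where the argument diverges significantly from the one-cut proof of \cite{BFG}.
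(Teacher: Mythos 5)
Your overall scaffolding (ansatz, recentering by $\mu_V^{\boldsymbol\epsilon}$, inversion of $\Xi$ on a multi-cut support, cutoff near $\partial\mathbf{B}$, loop-equation/concentration bounds) is the right family of ideas, but the bookkeeping at the heart of the argument is misassigned, and as written the plan does not give the stated bound. First, the deterministic $O(N)$ piece you propose to kill with $\mathbf{y}_{1,t}^{\boldsymbol\epsilon}$ vanishes identically, for any choice of $\mathbf{y}_{1,t}^{\boldsymbol\epsilon}$, by the equilibrium relation obtained by perturbing $\mu_V^{\boldsymbol\epsilon}$ inside the fixed-filling-fraction class; so no function can be "spent" there, and what $\mathbf{y}_{1,t}^{\boldsymbol\epsilon}$ actually has to cancel is the linear-in-$M_N$ term $\int\bigl[\Xi\mathbf{y}_{1,t}^{\boldsymbol\epsilon}+(1-\tfrac{\beta}{2})\int\partial_1\mathbf{z}_t^{\boldsymbol\epsilon}(z,\cdot)\,d\mu_V^{\boldsymbol\epsilon}(z)\bigr]dM_N$. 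Second, and more seriously, the bilinear piece does \emph{not} carry a $1/N$ prefactor: expanding $L_N=\mu_V^{\boldsymbol\epsilon}+M_N/N$ in the $W$-terms of $\mathcal{R}$ leaves exactly $-\tfrac12\iint W\,dM_N\,dM_N$, order one in $N$, and by the very bilinear concentration estimates you invoke this is of size $(\log N)^2$, not $(\log N)^3/N$. It must therefore be cancelled, which is precisely the role of $\mathbf{z}_t^{\boldsymbol\epsilon}$: one solves $\Xi\mathbf{z}_t^{\boldsymbol\epsilon}(\cdot,y)=\tfrac12 W(\cdot,y)+\kappa_1(\cdot,y)$ with $\kappa_1(\cdot,y)$ constant on each $B_h$. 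Note also that the multi-cut obstruction is not handled by checking that the sources are "automatically orthogonal to the cokernel": the equation $\Xi f=k$ is solvable only after adjusting $k$ by a uniquely determined piecewise constant $\kappa_k$ (this is the content of the explicit contour-integral inversion with the edge constants $c_h^1,c_h^2$), and the adjustment is harmless precisely because $M_N(B_h)=0$ in the fixed-filling-fraction ensemble, so $\int\kappa_k\,dM_N=0$. With your swapped assignment the bilinear $W$-term survives and the proposition fails by a factor of order $N(\log N)^{-1}$.

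Two further points are missing even after correcting the roles. The residual error $E$ contains not only $\tfrac1N\times$(linear and bilinear) statistics but also $\tfrac1N\times$trilinear statistics of $M_N$, whose control requires third-order loop equations (equivalently fourth moments of linear statistics), beyond the linear/bilinear bounds you quote. And there remains a genuinely deterministic constant $C_t^{N,\boldsymbol\epsilon}$ (involving $c_t^{N,\boldsymbol\epsilon}=\partial_t\log Z_{T_t,B}^{N,\boldsymbol\epsilon}$) which no choice of $\mathbf{y}_{1,t}^{\boldsymbol\epsilon}$ or $\mathbf{z}_t^{\boldsymbol\epsilon}$ removes; it has to be shown to be $O((\log N)^3/N)$ separately, e.g. via the integration-by-parts identity $\mathbb{E}_{T_t,B}^{N,\boldsymbol\epsilon}(\boldsymbol{\mathcal{L}}(\mathbf{Y}))=0$ combined with the $L^1$ bound on $E$, a step your sketch does not address.
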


\noindent Using the substitution (\ref{expansion}), we have to find equations for  $\mathbf{y}_{1,t}^{\boldsymbol\epsilon}$ and $ \mathbf{z}_{t}^{\boldsymbol\epsilon} $. To simplify the notations,we will write $\mathcal{R}$ instead of $\mathcal{R}_t^{N,\boldsymbol\epsilon}(\mathbf{Y}^{N,\boldsymbol\epsilon})$. We obtain:

\begin{equation*}
\begin{split}
\mathcal{R} & =  - \frac{N^2}{2} \iint W dL_N dL_N  + N^2 \int W dL_N d\mu_{V}^{\boldsymbol\epsilon}  \\
& + \frac{\beta N}{2} \iint \frac{\mathbf{y}^{\boldsymbol\epsilon}_{1,t}(x) - \mathbf{y}^{\boldsymbol\epsilon}_{1,t}(y)}{x - y} dL_N(x) dL_N(y) + N \int \partial_1 T_t(x,y) \mathbf{y}^{ \boldsymbol\epsilon}_{1,t}(x) dL_N(x) dL_N(y)\\
& + \frac{\beta N}{2} \iint \frac{\mathbf{\xi}^{\boldsymbol\epsilon}_{t}(x,M_N) - \mathbf{\xi}^{\boldsymbol\epsilon}_{t}(y,M_N)}{x - y} dL_N(x) dL_N(y) + N \int \partial_1 T_t(x,y) \mathbf{\xi}^{ \boldsymbol\epsilon}_{t}(x,M_N) dL_N(x) dL_N(y) \\
& +  \frac{1}{N}\mathbf{\eta}(M_N) +  \left(1-\frac{\beta}{2}\right)\int {\mathbf{y}^{\boldsymbol\epsilon}_{1,t}}' dL_N + \left(1-\frac{\beta}{2}\right) \int \partial_1\mathbf{\xi}^{\boldsymbol\epsilon}_t(x,M_N) dL_N(x)  + \tilde{c}_t^N
\end{split}
\end{equation*}

\noindent where $\tilde{c}_t^N$ is a constant and for any measure $\nu$ we set
\begin{equation*}
\mathbf{\eta}(\nu) = \int \partial_2\mathbf{z}^{\boldsymbol\epsilon}_t(y,y) d\nu(y).
\end{equation*}

\noindent We use equilibrium relations to recenter $L_N$ by $\mu_{V}^{\boldsymbol\epsilon}$. Consider $f$ a bounded measurable function on $B$ and  $\mu_{V,\delta}^{\boldsymbol\epsilon} = (x + \delta f(x))\# \mu_{V}^{\boldsymbol\epsilon}$. Then as for $\delta$ small enough $\mu_{V,\delta}^{\boldsymbol\epsilon} (B_h) = \epsilon_h$ for all $ 0 \leq h \leq g$, we have $E(\mu_{V,\delta}^{\boldsymbol\epsilon})  \geq E(\mu_{V}^{\boldsymbol\epsilon})$ where we defined the energy in (\ref{energy}). By differentiating at $\delta = 0$  we obtain
\begin{equation}\label{equilibre}
\frac{\beta}{2} \iint \frac{f(x)-f(y)}{x-y} d\mu_{V}^{\boldsymbol\epsilon}(x)d\mu_{V}^{\boldsymbol\epsilon}(y) +  \int \partial_1 T_t(x,y) f(x) d\mu_{V}^{\boldsymbol\epsilon}(x)d\mu_{V}^{\boldsymbol\epsilon}(y)=0 .
\end{equation} 

\noindent Thus, if we define the operator $\Xi$ acting  on  smooth functions $f:B \longrightarrow \mathbb{R}$ by
\begin{equation*}
\Xi f(x) =  \int \left[ \beta \  \frac{f(x)-f(y)}{x-y}  + \partial_1 T_t(x,y) f(x) + \partial_2 T_t(x,y) f(y) \right]d\mu_{V}^{\boldsymbol\epsilon}(y) \ ,
\end{equation*}

\noindent we obtain
\begin{equation*}
\begin{split}
 \frac{\beta }{2} & \iint \frac{f(x)-f(y)}{x-y}dL_N(x) dL_N(y) + \int \partial_1 T_t(x,y) f(x) dL_N(x) dL_N(y) \\
& =  \frac{1}{N} \int \Xi f dM_N +\frac{1}{N^2} \left[ \frac{\beta}{2} \iint \frac{f(x)-f(y)}{x-y} dM_N(x) dM_N(y) + \iint \partial_1 T_t(x,y)f(x) dM_N(x) dM_N(y) \right] \ .
\end{split}
\end{equation*}

\noindent Therefore we can write
\begin{equation*}
\begin{split}
\mathcal{R} &=  \int \left[ \Xi  \mathbf{y}^{\boldsymbol\epsilon}_{1,t} +\left(1- \frac{\beta}{2} \right)  \int \partial_1 \mathbf{z}^{\boldsymbol\epsilon}_{t}(z,\cdot)d\mu_{V}^{\boldsymbol\epsilon}(z) \right] dM_N \\
&+ \iint \left[ \Xi\mathbf{z}^{\boldsymbol\epsilon}_{t}(\cdot , y)[x] - \frac{1}{2}W(x,y) \right] dM_N(x) dM_N(y) + C_t^{N,\boldsymbol\epsilon} + E
\end{split}
\end{equation*}
\noindent with

\begin{equation*}
\Xi \mathbf{z}^{\boldsymbol\epsilon}_{t}(\cdot , y)[x]=  \int \left[ \beta \frac{\mathbf{z}^{\boldsymbol\epsilon}_{t}(x,y)-\mathbf{z}^{\boldsymbol\epsilon}_{t}(z,y)}{x-z}   + \partial_1 T_t(x,z) \mathbf{z}^{\boldsymbol\epsilon}_{t}(x,y) + \partial_2 T_t(x,z) \mathbf{z}^{\boldsymbol\epsilon}_{t}(z,y) \right] d\mu_{V}^{\boldsymbol\epsilon}(z)
\end{equation*}
\noindent where $C_t^{N,\boldsymbol\epsilon}$ is deterministic and $E$ is an error term:
\begin{equation}
\begin{split}
E &= \frac{1}{N} \int \partial_2 \mathbf{z}^{\boldsymbol\epsilon}_{t}(x,x) dM_N(x) + \frac{1}{N}\left(1-\frac{\beta}{2}\right)\int {\mathbf{y}^{\boldsymbol\epsilon}_{1,t}}' dM_N \\
&+  \frac{1}{N}\left(1-\frac{\beta}{2}\right) \iint \partial_1 \mathbf{z}^{\boldsymbol\epsilon}_{t}(x,y) dM_N(x)dM_N(y) \\
&+\frac{1}{N} \iint \left[ \frac{\beta}{2} \frac{\mathbf{y}^{\boldsymbol\epsilon}_{1,t}(x)-\mathbf{y}^{\boldsymbol\epsilon}_{1,t}(y)}{x-y} +\partial_1 T_t(x,y)\mathbf{y}^{\boldsymbol\epsilon}_{1,t}(x) \right] dM_N(x)dM_N(y) \\
&+\frac{1}{N} \iiint \left[ \frac{\beta}{2} \frac{\mathbf{z}^{\boldsymbol\epsilon}_{t}(x,y)-\mathbf{z}^{\boldsymbol\epsilon}_{t}(z,y)}{x-z} +\partial_1 T_t(x,z)\mathbf{z}^{\boldsymbol\epsilon}_{t}(x,y) \right] dM_N(x)dM_N(y)dM_N(z)
\end{split}.
\end{equation}

\noindent To make $\mathcal{R}$ small we need 
\begin{equation*}
\left\{
\begin{split}
\Xi & \mathbf{z}^{\boldsymbol\epsilon}_{t}(\cdot , y)[x] =  \frac{1}{2} W(x,y)+\kappa_1(x , y) \ , \\
\Xi & \mathbf{y}_{1,t}^{\boldsymbol\epsilon} =  \left(\frac{\beta}{2}-1 \right)  \int \partial_1 \mathbf{z}^{\boldsymbol\epsilon}_{t}(z,\cdot)d\mu_{V}^{\boldsymbol\epsilon}(z)  +\kappa_2 \ ,
\end{split}
\right.
\end{equation*}

\noindent where $ \kappa_2$ and $\kappa_1(\cdot , y)$ are functions on $B$ constant on each $B_h$.\\

\noindent The following lemma shows how to invert $\Xi$ and will give us the desired functions. We will denote by $\mathcal{O}(U)$ the set of holomorphic functions on $U$.

\begin{lem}\label{thm:inv}
Let $V$ satisfy Hypothesis \ref{hypo} , $T_t$ as in (\ref{potentiel}) and $\boldsymbol\epsilon = \mathbf{N}/N$ in $\tilde{\mathcal{E}}$. The support of  $\mu_{V}^{\boldsymbol\epsilon}$  is a union of $g+1$  intervals  $A^{\boldsymbol\epsilon} = \underset {{0\leq h \leq g}}{\bigcup} [\alpha^{\boldsymbol\epsilon}_{h,-} ; \alpha^{\boldsymbol\epsilon}_{h,+}]$  with $\alpha^{\boldsymbol\epsilon}_{h,-} < \alpha^{\boldsymbol\epsilon}_{h,+}$  and,
\begin{equation*}
\frac{d\mu_{V}^{\boldsymbol\epsilon}}{dx}= S(x)\prod \sqrt{\lvert x - \alpha^{\boldsymbol\epsilon}_{h,-} \rvert \lvert x - \alpha^{\boldsymbol\epsilon}_{h,+} \rvert}  
\end{equation*}
\noindent with $S$ positive on $A^{\boldsymbol\epsilon}$.

\noindent Let $k \in \mathcal{O}(U) $  and set for $f \in \mathcal{O}(U) $ 
\begin{equation*}
\Xi f(x) =  \int \left[ \beta \  \frac{f(x)-f(y)}{x-y}  + \partial_1 T_t(x,y) f(x) + \partial_2 T_t(x,y) f(y) \right]d\mu_{T_t}^{\boldsymbol\epsilon}(y) \ \ \ \ \ \forall x\in U.
\end{equation*}

\noindent Then there exists a unique function $\kappa_k$ on $U$ constant on each $U_h$ such that the equation

\begin{equation*}
\Xi f = k + \kappa_k
\end{equation*}

\noindent has a solution in $\mathcal{O}(U)$ . Moreover, for all $x\in U_h$ 
\begin{equation}\label{formuleinverse}
f(x) = - \frac{1}{2 \beta \pi^2 \sigma(x) \sigma_h(x) S(x)} \left[ \oint \frac{i \sigma_h(\xi) (k(\xi)+c_h^1)}{(\xi - x) } d\xi + c_h^2\right] \ , 
\end{equation}

\noindent where the contour surrounds x and  $A^{\boldsymbol\epsilon}_h$  in $U_h$ and 
\begin{equation*}
\begin{split}
\sigma^2(x) & = \prod_{h'} \left( x - \alpha^{\boldsymbol\epsilon}_{h',-}\right)\left(x  - \alpha^{\boldsymbol\epsilon}_{h',+}\right) \\
\sigma(x) & \underset{x \rightarrow \infty}{\sim} x^{g+1} \\
\sigma^2_h(x) & = \left( x - \alpha^{\boldsymbol\epsilon}_{h,-}\right)\left(x  - \alpha^{\boldsymbol\epsilon}_{h,+}\right) \\
\sigma_h(x) & \underset{x \rightarrow \infty}{\sim} x
\end{split}
\end{equation*}

\noindent and the constants $c_h^1$ and  $c_h^2$ are chosen in a way such that the expression under the bracket vanishes at  $x=\alpha^{\boldsymbol\epsilon}_{h,-}$ and $x=\alpha^{\boldsymbol\epsilon}_{h,+}$ for each $h$ (see the following  Lemma).\\

\noindent Moreover $f$ satisfies for all $ j$
\begin{equation}\label{norme}
\norm{f}_{C^j(B)} \leq C_j \norm{k}_{C^{j+2}(B)}
\end{equation}

\noindent for some constants $C_j$. We will denote $f$ by $\Xi^{-1} k$.

\end{lem}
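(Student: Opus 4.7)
The plan is to adapt Lemma 3.2 of \cite{BFG} from the one-cut case to the multi-cut setting; the new feature here is a finite-dimensional compatibility system of $2(g+1)$ scalar equations that determines the constants $c_h^1, c_h^2$ and hence $\kappa_k$. Since both sides of $\Xi f = k + \kappa_k$ are holomorphic on $U$, by unique continuation it suffices to verify the identity on the support $A^{\boldsymbol\epsilon}$. Differentiating the Euler--Lagrange relation (\ref{caracterisation}) in $x$ yields, for $x\in A^{\boldsymbol\epsilon}$,
\[ \beta\, \mathrm{P.V.}\!\int \frac{d\mu_{T_t}^{\boldsymbol\epsilon}(y)}{x-y} + \int \partial_1 T_t(x,y)\,d\mu_{T_t}^{\boldsymbol\epsilon}(y) = 0, \]
and multiplying by $f(x)$ and substituting into $\Xi f$ reduces the problem on $A^{\boldsymbol\epsilon}$ to the Tricomi-type finite Hilbert transform equation
\[ -\beta\,\mathrm{P.V.}\!\int \frac{f(y)}{x-y}\,d\mu_{T_t}^{\boldsymbol\epsilon}(y) + \int \partial_2 T_t(x,y)\,f(y)\,d\mu_{T_t}^{\boldsymbol\epsilon}(y) = k(x) + \kappa_k(x), \]
posed on a union of $g+1$ intervals with an analytic perturbation.

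I would then invert this equation by a Riemann--Hilbert / Stieltjes-transform argument. The transform $G_f(z) := \int (z-y)^{-1} f(y)\,d\mu_{T_t}^{\boldsymbol\epsilon}(y)$ satisfies $G_f^+ + G_f^- = 2\,\mathrm{P.V.}\!\int f(y)/(x-y)\,d\mu_{T_t}^{\boldsymbol\epsilon}(y)$ on $A^{\boldsymbol\epsilon}$, while $\sigma^+ = -\sigma^-$ there, so $H(z) := G_f(z)/\sigma(z)$ acquires an additive jump equal to a known function divided by $\sigma^+$. Plemelj--Sokhotski combined with a contour deformation --- permitted because $k$ and $\partial_2 T_t$ are holomorphic on $U$ --- yields $H$ as a sum of integrals along contours encircling each $A_h^{\boldsymbol\epsilon}$ in $U_h$, and multiplying back by $\sigma$ produces the bracketed expression in (\ref{formuleinverse}). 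The constants $c_h^1$ encode the piecewise-constant $\kappa_k$ one is allowed to add to $k$, while the $c_h^2$ span the $(g+1)$-dimensional polynomial ambiguity of the RHP solution (the kernel of the homogeneous equation on a union of $g+1$ intervals).

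The candidate $f$ obtained from (\ref{formuleinverse}) has a priori simple poles at the endpoints $\alpha_{h,\pm}^{\boldsymbol\epsilon}$ coming from the zeros of $\sigma(x)\sigma_h(x)$; requiring $f\in\mathcal{O}(U)$ forces the bracket to vanish at $x = \alpha_{h,\pm}^{\boldsymbol\epsilon}$ for every $h$, giving $2(g+1)$ linear equations in the $2(g+1)$ unknowns $\{c_h^1, c_h^2\}_{h=0}^g$. The main obstacle is to prove that this square system has non-zero determinant, for this delivers simultaneously the existence of $f$ and the uniqueness of $\kappa_k$. I expect this to follow from the regularity of $\mu_V^{\boldsymbol\epsilon}$ on $\tilde{\mathcal{E}}$ guaranteed by Hypothesis \ref{hypo} and Lemma \ref{regularity} (positivity of $S$ on $A^{\boldsymbol\epsilon}$ and strict separation of the cuts), which can be phrased as non-vanishing of a period matrix on the hyperelliptic curve $y^2=\sigma(x)^2$; alternatively one may deduce it from the uniqueness of the solution of the fixed-filling-fraction equilibrium problem on $\tilde{\mathcal{E}}$.

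The bound (\ref{norme}) then follows by direct $C^j$-estimates on (\ref{formuleinverse}): each differentiation in $x$ under the contour integrals pulls out a factor $(\xi - x)^{-1}$ which remains uniformly bounded when the contours are chosen at positive distance from $B$, and $(\sigma\sigma_h S)^{-1}$ is bounded on $B$ because $S > 0$ on $A^{\boldsymbol\epsilon}$ and $B$ is only a small enlargement of $A$. The two-derivative loss from $k$ to $f$ reflects the two endpoint conditions per cut used to fix $c_h^1, c_h^2$, each of which consumes one derivative of $k$ via a Taylor expansion at the endpoints.
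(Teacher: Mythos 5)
Your overall strategy (reduce to a singular-integral equation on the support and solve it by a scalar Riemann--Hilbert problem for $G_f/\sigma$) differs from the paper's, which instead uses the Stieltjes-transform identity $\beta G(z)+\int\partial_1T_t(z,y)\,d\mu_V^{\boldsymbol\epsilon}(y)=\beta\pi S(z)\sigma(z)$ to rewrite $\Xi$ as a contour operator and then checks the explicit formula (\ref{formuleinverse}) by residue calculus. The central gap in your plan is the treatment of the term $\int\partial_2T_t(x,y)f(y)\,d\mu_{T_t}^{\boldsymbol\epsilon}(y)$. It is not a harmless ``analytic perturbation'': since $\partial_2W(x,y)=\beta/(y-x)$ when $x,y$ lie in distinct cuts, one finds for $x\in A_h^{\boldsymbol\epsilon}$ that $\Xi f(x)=-\beta\,\mathrm{P.V.}\int_{A_h}\frac{f(y)\,d\mu(y)}{x-y}-(1-t)\beta\int_{A\setminus A_h}\frac{f(y)\,d\mu(y)}{x-y}+\mathrm{const}(f)$, so for $t\in(0,1]$ the equation couples the cuts through the unknown $f$ itself. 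Consequently the additive jump of $H=G_f/\sigma$ in your Riemann--Hilbert problem is \emph{not} a known function, and Plemelj plus contour deformation does not close the argument; as written your scheme only covers $t=0$, where the extra term really is an overall constant. The paper's proof copes with exactly this point by writing $\tilde\Xi=(1-t)\tilde\Xi_0+t\tilde\Xi_1+c_t$, where $\tilde\Xi_0$ integrates over a contour surrounding all the cuts and $\tilde\Xi_1$ only the cut containing $z$, and by verifying that the single $t$-independent formula (\ref{formuleinverse}) --- whose denominator mixes the global square root $\sigma$ with the local one $\sigma_h$ --- inverts \emph{both} operators up to admissible constants. This simultaneous inversion is the crux of the multi-cut lemma and is missing from your proposal; without it (or an equivalent device) the step ``the RHP yields the bracketed expression in (\ref{formuleinverse})'' fails for the interpolating family $T_t$.

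Two further points. First, the linear system for the constants is not a genuinely coupled $2(g+1)\times 2(g+1)$ problem requiring a period-matrix nondegeneracy: the vanishing conditions at $\alpha_{h,\pm}^{\boldsymbol\epsilon}$ involve only $(c_h^1,c_h^2)$, so the system is block diagonal with one $2\times2$ block per cut, and its invertibility is the elementary explicit computation of Lemma \ref{surj} (the integrals $\int_{\alpha_{h,-}}^{\alpha_{h,+}}\sqrt{(y-\alpha_{h,-})(\alpha_{h,+}-y)}\,(y-\alpha_{h,\mp})^{-1}dy=\pm\pi(\alpha_{h,+}-\alpha_{h,-})/2$); leaving this as something you ``expect to follow'' leaves both existence and uniqueness unproved, when in fact it is the easy part. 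Second, your justification of (\ref{norme}) is incorrect as stated: $\sigma\sigma_h$ vanishes to first order at the endpoints $\alpha_{h,\pm}^{\boldsymbol\epsilon}$, which lie \emph{inside} $B$, so $(\sigma\sigma_h S)^{-1}$ is unbounded on $B$. The $C^j(B)$ bound must use precisely the endpoint vanishing of the bracket enforced by $c_h^1,c_h^2$ (the singularity is removable), e.g. through Cauchy estimates for the holomorphic extension of $f$ on a complex neighbourhood of $B$, rather than a pointwise bound on the prefactor together with differentiation under the contour integral.
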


\noindent Before proving this lemma we need another  lemma

\begin{lem}\label{surj}
Let $V \in \mathcal{O}(U)$ and $\mu_{V}^{\boldsymbol\epsilon}$  as in the previous lemma.\\

\noindent Then for all $ 0 \leq h \leq g$ the linear operator 
\begin{equation*}
\begin{split}
\boldsymbol{\Theta_h}:= \mathbb{C}^2 & \longrightarrow \mathbb{C}^{2} \\
(c^1,c^2) & \longrightarrow \left( c^1 \oint \frac{\sigma_h(\xi) }{(\xi - \alpha^{\boldsymbol\epsilon}_{h,-}) } d\xi + c^2 , c^1 \oint \frac{\sigma_h(\xi) }{(\xi - \alpha^{\boldsymbol\epsilon}_{h,+}) } d\xi + c^2 \right) 
\end{split} 
\end{equation*}

\noindent is invertible and $\boldsymbol{\Theta_h^{-1}}$ is analytic.
\end{lem}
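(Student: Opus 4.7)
The plan is to write the operator $\boldsymbol{\Theta_h}$ as an explicit $2\times 2$ matrix, compute its determinant via a residue-at-infinity calculation, observe that the determinant is nonzero, and then obtain regularity of the inverse in $\boldsymbol\epsilon$ from Lemma \ref{regularity}.

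First I would note that in the canonical basis of $\mathbb{C}^2$ the operator is represented by the matrix
$$
M_h \;=\; \begin{pmatrix} I_- & 1 \\ I_+ & 1 \end{pmatrix}, \qquad I_\pm \;:=\; \oint_{\Gamma_h} \frac{\sigma_h(\xi)}{\xi - \alpha^{\boldsymbol\epsilon}_{h,\pm}}\,d\xi ,
$$
where $\Gamma_h \subset U_h$ is a counterclockwise contour surrounding $A^{\boldsymbol\epsilon}_h = [\alpha^{\boldsymbol\epsilon}_{h,-},\alpha^{\boldsymbol\epsilon}_{h,+}]$. Thus $\det M_h = I_- - I_+$, so the problem reduces to evaluating the two scalar integrals $I_\pm$.

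Next I would observe that $\sigma_h(\xi)/(\xi - \alpha^{\boldsymbol\epsilon}_{h,\pm})$ is holomorphic on $\mathbb{C}\setminus A^{\boldsymbol\epsilon}_h$: $\sigma_h$ has a branch cut on this segment and $1/(\xi-\alpha^{\boldsymbol\epsilon}_{h,\pm})$ has a simple pole at an endpoint of that segment, so both obstructions sit inside $\Gamma_h$. The contour can therefore be deformed freely out to a circle of radius $R$ with $R\to\infty$. From the expansion $\sigma_h(\xi) = \xi - \tfrac{1}{2}(\alpha^{\boldsymbol\epsilon}_{h,-}+\alpha^{\boldsymbol\epsilon}_{h,+}) + O(\xi^{-1})$ at infinity,
$$
\frac{\sigma_h(\xi)}{\xi - \alpha^{\boldsymbol\epsilon}_{h,\pm}} \;=\; 1 \;+\; \frac{\alpha^{\boldsymbol\epsilon}_{h,\pm} - \tfrac{1}{2}(\alpha^{\boldsymbol\epsilon}_{h,-}+\alpha^{\boldsymbol\epsilon}_{h,+})}{\xi} \;+\; O(\xi^{-2}),
$$
and the residue at infinity immediately yields $I_\pm = \pm i\pi(\alpha^{\boldsymbol\epsilon}_{h,+}-\alpha^{\boldsymbol\epsilon}_{h,-})$. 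Consequently
$$
\det M_h \;=\; -2i\pi\bigl(\alpha^{\boldsymbol\epsilon}_{h,+}-\alpha^{\boldsymbol\epsilon}_{h,-}\bigr) \;\neq\; 0,
$$
since $\alpha^{\boldsymbol\epsilon}_{h,+}>\alpha^{\boldsymbol\epsilon}_{h,-}$, which proves invertibility.

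Finally, for the regularity statement, Lemma \ref{regularity} gives that $\boldsymbol\epsilon \mapsto \alpha^{\boldsymbol\epsilon}_{h,\pm}$ is smooth (and in fact analytic by the implicit function theorem applied to the equilibrium relations defining the endpoints) on $\tilde{\mathcal{E}}$. Since the entries of $M_h$ and $(\det M_h)^{-1}$ are explicit analytic functions of these endpoints and do not degenerate, Cramer's rule yields analytic dependence of $\boldsymbol{\Theta_h^{-1}}$ on $\boldsymbol\epsilon$. I expect no substantive obstacle here; the only delicate point is bookkeeping the orientation of $\Gamma_h$ and the sign of $\sigma_h$ at infinity to correctly extract the residues, but once these are fixed everything is a one-line computation.
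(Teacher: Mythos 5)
Your proof is correct and takes essentially the same route as the paper: both reduce the lemma to showing that the two contour integrals $\oint \sigma_h(\xi)(\xi-\alpha^{\boldsymbol\epsilon}_{h,\pm})^{-1}\,d\xi$ take distinct values, so that the $2\times 2$ matrix with these entries in the first column and $1$'s in the second has nonzero determinant. The only difference is how the integrals are evaluated: you push the contour to infinity and read off the residue there, whereas the paper collapses the contour onto the cut $A^{\boldsymbol\epsilon}_h$ and quotes the real integrals $\int_{\alpha^{\boldsymbol\epsilon}_{h,-}}^{\alpha^{\boldsymbol\epsilon}_{h,+}}\sqrt{(y-\alpha^{\boldsymbol\epsilon}_{h,-})(\alpha^{\boldsymbol\epsilon}_{h,+}-y)}\,(y-\alpha^{\boldsymbol\epsilon}_{h,\pm})^{-1}\,dy=\pm\pi(\alpha^{\boldsymbol\epsilon}_{h,+}-\alpha^{\boldsymbol\epsilon}_{h,-})/2$; both computations give $I_\pm=\pm i\pi(\alpha^{\boldsymbol\epsilon}_{h,+}-\alpha^{\boldsymbol\epsilon}_{h,-})$ and hence the same conclusion.
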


\begin{proof}
This comes easily from the fact that 
\begin{equation*}
\begin{split}
\int^{\alpha^{\boldsymbol\epsilon}_{h,+}}_{\alpha^{\boldsymbol\epsilon}_{h,-}} \frac{\sqrt{(y-\alpha^{\boldsymbol\epsilon}_{h,-})(\alpha^{\boldsymbol\epsilon}_{h,+}-y)}}{y-\alpha^{\boldsymbol\epsilon}_{h,-}} dy = \pi \  \frac{\alpha^{\boldsymbol\epsilon}_{h,+}-\alpha^{\boldsymbol\epsilon}_{h,-}}{2} \\
\int^{\alpha^{\boldsymbol\epsilon}_{h,+}}_{\alpha^{\boldsymbol\epsilon}_{h,-}} \frac{\sqrt{(y-\alpha^{\boldsymbol\epsilon}_{h,-})(\alpha^{\boldsymbol\epsilon}_{h,+}-y)}}{y-\alpha^{\boldsymbol\epsilon}_{h,+}} dy = \pi \  \frac{\alpha^{\boldsymbol\epsilon}_{h,-}-\alpha^{\boldsymbol\epsilon}_{h,+}}{2} 
\end{split}
\end{equation*}
\end{proof}

\begin{proof}[Proof of Lemma \ref{thm:inv}]
By the identity (\ref{equilibre}) with $f(x) = (z-x)^{-1}$ and $z$ outside the support, we obtain that the Stieltjes transform $G(z) = \int \frac{1}{z-y} d\mu_{V}^{\boldsymbol\epsilon}(y)$ satisfies

\begin{equation*}
\frac{\beta}{2}G(z)^2 + G(z) \int \partial_1 T_t(z,y)d\mu_{V}^{\boldsymbol\epsilon}(y)  + F(z)=0  \ \ \ with \ \ F(z) = \iint \frac{ \partial_1 T_t(\tilde{y},y)- \partial_1 T_t(z,y)}{\tilde{y} - z} d\mu_{V}^{\boldsymbol\epsilon}(\tilde{y})d\mu_{V}^{\boldsymbol\epsilon}(y)
\end{equation*}

\noindent and this gives 
\begin{equation*}
\beta G(z) + \int  \partial_1T_t(z,y)d\mu_{V}^{\boldsymbol\epsilon}(y)  = - \sqrt{ \left(\int  \partial_1T_t(z,y)d\mu_{V}^{\boldsymbol\epsilon}(y)\right)^2 - 2 \beta F(z)} .
\end{equation*}

\noindent As $-\pi^{-1} \mathcal{I}G(z)$ converges towards the density of $\mu_{V}^{\boldsymbol\epsilon}$ as $z$ goes to the real axis (see for instance \cite{AGZ}, Section 2.4 for the  basic properties of the Stieltjes transform) and the quantity under the square root converges to a  real number, this number has to be negative on the support (otherwise the density would vanish) and thus for $x \in   A^{\boldsymbol\epsilon}$ 
\begin{equation*}
\frac{d\mu_{V}^{\boldsymbol\epsilon}}{dx} = \frac{1}{\beta \pi}\sqrt{2 \beta F(x) - \left(\int \partial_1T_t(x,y)d\mu_{V}^{\boldsymbol\epsilon}(y)\right)^2 } \ .
\end{equation*}

\noindent Noticing that $\sigma$ becomes purely imaginary when $z$ converges towards the support, we may write 
\begin{equation}\label{stieltjes}
 \beta G(z) + \int  \partial_1T_t(z,y)d\mu_{V}^{\boldsymbol\epsilon}(y) = \beta \pi \tilde{S}(z) \sigma(z)
\end{equation}

\noindent where  $\tilde{S}$ is an analytic extension of $S$  in $U$ (we can assume $\tilde{S}$ non zero  on $U$ by possibly shrinking $U$). We will keep writing $S$ for $\tilde{S}$.\\

\noindent For $f$ analytic in $U\setminus A^{\boldsymbol\epsilon}$ and $z \in U\setminus A^{\boldsymbol\epsilon}$ let

\begin{equation*}
\tilde{\Xi}f (z) =   \frac{i}{2} \oint \left( \frac{\beta f(\xi)}{z-\xi} - \partial_2T_t(z,\xi) f(\xi) \right){S}(\xi)\sigma(\xi) d\xi
\end{equation*} 

\noindent  where the contour  surrounds  $z$ and each $A^{\boldsymbol\epsilon}_h$. Then  $\tilde{\Xi}f \in \mathcal{O}(U\setminus A^{\boldsymbol\epsilon})$ and, noticing that $ - i S(x+i\delta) \sigma(x+i\delta) \underset{\delta \rightarrow 0^{+}}{\longrightarrow} \frac{d\mu_{V}^{\boldsymbol\epsilon}}{dx}$ , we have 

\begin{equation}
\begin{split}
\Xi f(z) &=  - \int \left(  \beta \frac{f(y)}{z-y} -\partial_2T_t(z,y)f(y) \right) d\mu_{V}^{\boldsymbol\epsilon}(y) + f(z) \left( \int \partial_1T_t(z,y) d\mu_{V}^{\boldsymbol\epsilon}(y)+\beta \int \frac{d\mu_{V}^{\boldsymbol\epsilon}(y)}{z-y} \right)\\
& = \frac{i}{2} \oint \left( \frac{\beta f(\xi)}{z-\xi} - \partial_2T_t(z,\xi) f(\xi) \right){S}(\xi)\sigma(\xi) d\xi + \beta \pi f(z) S(z) \sigma(z) \\
&= \tilde{\Xi}f (z)
\end{split}
\end{equation}

\noindent  where the contour surrounds each $A^{\boldsymbol\epsilon}_h$ (but not $z$), and we  used Cauchy's formula and (\ref{stieltjes}). If furthermore $f\in \mathcal{O}(U)$, by continuity this formula extends to $z \in U$.\\

\noindent Let $k  \in \mathcal{O}(U)$. We want to show that the function defined on each $U_h$ by \\

\begin{equation*}
f(z) = - \frac{1}{2 \beta \pi^2 \sigma(z) \sigma_h(z) S(z)} \left[ \oint \frac{i \sigma_h(\xi) (k(\xi)+c_h^1)}{(\xi - z) } d\xi + c_h^2\right]
\end{equation*}

\noindent where the contour surronds $A^{\boldsymbol\epsilon}_{h}$ and lays in $U_h$,   and $c_h^1 $ and $c_h^2$ are defined as in the statement of the lemma,  is a solution of $\Xi f = k + \kappa_k$ in $\mathcal{O}(U)$. The fact that $f \in \mathcal{O}(U)$ is clear (the function is meromorphic and the poles are removable by construction of $c^1$ and $c^2$). Thus, by previous remark, it suffices  to prove that $\tilde{\Xi} f = k + \kappa_k$.\\

\noindent By (\ref{potentiel}) We have 
\begin{equation}
\tilde{\Xi}f = (1-t) \ \tilde{\Xi}_0f + t \ \tilde{\Xi}_1 f +c_t
\end{equation}

\noindent where $c_t$ is a function constant on each $U_h$  depending on $t$ and 
\begin{equation*}
\left\lbrace
\begin{split}
\tilde{\Xi}_0f (z) &= \frac{\beta i}{2}\oint \frac{f(\xi)\sigma(\xi)S(\xi)}{z-\xi} d\xi  \\
\tilde{\Xi}_1f (z) &= \frac{\beta i}{2}\oint \frac{f(\xi)\sigma(\xi)S(\xi)}{z-\xi} d\xi
\end{split}
\right.
\end{equation*}

\noindent where the first contour surronds  $z$ and each $A^{\boldsymbol\epsilon}_{h'}$ whereas the second one surrounds $z$ and $A^{\boldsymbol\epsilon}_{h}$ when $z \in U_h$.\\

\noindent Let $f_0$ and $f_1$ be the functions analytic in $U\setminus A^{\boldsymbol\epsilon}$ defined on each $U_h\setminus A_h^{\boldsymbol\epsilon}$ by
\begin{equation*}
\begin{split}
f_0(z) &= - \frac{1}{2 \beta \pi^2 \sigma(z) \sigma_h(z) S(z)} \oint \frac{i \sigma_h(\xi) (k(\xi)+c_h^1)}{(\xi - z) } d\xi  \\
f_1(z) &= - \frac{c_h^2}{2 \beta \pi^2 \sigma(z) \sigma_h(z) S(z)}  
\end{split}
\end{equation*}

\noindent So that $f = f_0 + f_1$
\begin{equation*}
\begin{split}
 \tilde{\Xi}_0   (f_0)(z) &= \frac{\beta i}{2} \oint_{C} \frac{f_0(\xi){S}(\xi)\sigma(\xi)}{z-\xi} d\xi \\
       &=- \frac{\beta i}{2} \sum_h \oint_{C_h} \frac{1}{z-\xi}{\frac{1}{2 \beta \pi^2 \sigma(\xi)\sigma_h(\xi){S}(\xi)} \left( \oint_{C'_h} \frac{i \sigma_h(\eta) (k(\eta)+c_h^1)}{(\eta - \xi) } d\eta \right) {S}(\xi)\sigma(\xi)} d\xi \\
       &= \frac{1}{4 \pi^2} \sum_h \oint_{C_h} \oint_{C'_h} \frac{\sigma_h(\eta)(k(\eta)+c_h^1)}{(z-\xi)(\eta-\xi)\sigma_h(\xi)} d\eta d\xi 
\end{split}
\end{equation*}

\noindent where $C_h$ surrounds  $z$ and $A^{\boldsymbol\epsilon}_{h}$ (integral in $\xi$)  ,    and $C'_h$ surrounds $C_h$ (integral in $\eta$).\\

\noindent Cauchy formula gives 

$$
\oint_{C'_h} \frac{\sigma_h(\eta)(k(\eta)+c_h^1)}{(\eta-\xi)} d\eta = 2 i \pi (k(\xi)+c_h^1) \sigma_h(\xi) + \oint_{C''_h} \frac{\sigma_h(\eta)(k(\eta)+c_h^1)}{(\eta-\xi)} d\eta
$$

\noindent with $C_h$ surrounding $ C''_h$. Thus: 
\begin{equation*}
 \tilde{\Xi}_0 (f_0)(z) = \frac{1}{4 \pi^2} \sum_h \oint_{C_h} \oint_{C''_h} \frac{\sigma_h(\eta)(k(\eta)+c_h^1)}{(z-\xi)(\eta-\xi)\sigma_h(\xi)} d\eta d\xi + \frac{1}{4 \pi^2} \sum_h \oint_{C_h} \frac{2 i \pi (k(\xi)+c_h^1)}{z-\xi} d\xi.
\end{equation*}

\noindent Letting each $C_h$ go to infinity, we see that the first integral goes to zero and using  Cauchy formula again we see that the second term equals $k(z)+ c^1$.\\

\noindent We now prove $\tilde {\Xi}_0   (f_1)=0$ .
$$
\tilde {\Xi}_0   (f_1)(z)= -\frac{i}{4 \pi^2} \sum_h \oint_{C_h} \frac{c^2_h S(\xi)\sigma(\xi)}{\sigma(\xi)\sigma_h(\xi) S(\xi)(z-\xi)} d\xi = -\frac{i}{4 \pi^2} \sum_h \oint_{C_h} \frac{c^2_h}{\sigma_h(\xi)(z-\xi)} d\xi=0
$$

\noindent where we let the contours go to infinity.\\

\noindent By the exact same reasoning, we show that $\tilde{\Xi}_1 (f_0) = k + c^1 $ and $\tilde{\Xi}_1 (f_1) = 0$.\\

\noindent By setting $\kappa_k = c_t + c_h^1$ on each $U_h$ we have the desired result. The unicity of $\kappa_k$ is implied by the previous lemma. Formula (\ref{norme}) can be easily deduced by (\ref{formuleinverse}).

\end{proof}

\begin{rem} By Lemma \ref{surj} and (\ref{formuleinverse})   , if $k$   defined on $U\times U$ is analytic in each variable then $f$ defined on $U \times U$ and solution of

\begin{equation*}
\Xi f(\cdot , y) = k(x,y) +\kappa_k(x,y) \ \forall  y \in U \ ,
\end{equation*}

\noindent with $\kappa(.,y)$ constant on each $U_h$ is analytic in each variable.

\end{rem}

\noindent We can  now construct our approximate solution of the Monge-Ampère equation. As we want the domain $\mathbf{B}$ to be fixed by the flow of this approximate solution, we would like to choose   $\mathbf{y}_{1,t}^{\boldsymbol\epsilon}$ and $ \mathbf{z}_{t}^{\boldsymbol\epsilon}$ vanishing at the boundaries of $B$ (and $B\times B$). Fix $\delta >0$ small and denote $B^\delta =  \underset {{0\leq h \leq g}}{\bigcup} [\beta_{h,-} + \delta ; \beta_{h,+} - \delta]$. 

\noindent For a function $f:B \longrightarrow \mathbb{R}$ let $\Upsilon(f)$ be the multiplication of $f$ by a smooth plateau function equal to $1$ on $B^\delta$ and 0 outside $B$. If we are given a function $k \in \mathcal{O}(U)$ and $f \in \mathcal{O}(U) $ satisfying $\Xi(f) = k +\kappa_k$ , then :

\begin{itemize}
\item  $\Upsilon(f) = f  $ on $B^\delta$.
\item $\Upsilon(f)$  is  $C^{\infty}$ and has  compact support in $B$ (and can thus be extended by 0 to $\mathbb{R}$).
\item $\Xi(\Upsilon(f))= k + \kappa_k$  on $B^\delta$ (By definition of $\Xi$ and the fact that $f$ and $\Upsilon(f)$ coincide on $B^\delta$).
\item $\norm{\Upsilon(f)}_{C^j(\mathbb{R})} \leq C_j \norm{k}_{C^{j+2}(B)}$ for  some constants $C_j$.
\end{itemize}

\noindent Note that  by Remark \ref{controle}, possibly by shrinking $ \tilde{\mathcal{E}}$ we can assume  $\tilde{T_t}^{\boldsymbol\epsilon} < 0 $ outside $B^\delta$. Thus  for $N$ large enough and a constant $\eta >0$

\begin{equation}\label{LDP estimate}
\mathbb{P}_{T_t,B}^{N,\boldsymbol\epsilon} ( \exists i \  \lambda_i  \notin B^\delta ) \leq \exp(- N \eta) .
\end{equation}

\noindent  Moreover
\begin{equation}\label{ininv}
\int \left( \int \lvert k - \Xi(\Upsilon(f))\rvert dM_N \right) d \mathbb{P}_{T_t,B}^{N,\boldsymbol\epsilon} \leq \int \left(\int \lvert k - \Xi f\rvert dM_N \right) d \mathbb{P}_{T_t,B}^{N,\boldsymbol\epsilon}  + \int \left(\int \lvert \Xi f - \Xi(\Upsilon(f))\rvert dM_N \right)  d \mathbb{P}_{T_t,B}^{N,\boldsymbol\epsilon}  .
\end{equation}

\noindent The fist term on the right hand side is $0$ as $\kappa_k$ is constant on each $B_h$ and the second term is exponentially small by the large deviation estimate.\\

\noindent We first choose

\begin{equation*}
\left\{
\begin{split}
 & \mathbf{\hat{z}}^{\boldsymbol\epsilon}_{t}(\cdot , y)= \frac{1}{2} \ \Xi^{-1} 
 \left(  W(\cdot,y) \right) \ \forall  y\in B \\
  & \mathbf{\hat{y}}_{1,t}^{\boldsymbol\epsilon} = \left(\frac{\beta}{2}-1 \right) \Xi^{-1} \left(  \int \partial_1 \mathbf{\hat{z}}^{\boldsymbol\epsilon}_{t}(z,\cdot)d\mu_{V}^{\boldsymbol\epsilon}(z) \right)
\end{split}
\right.
\end{equation*}
\noindent and then

\begin{equation*}
\left\{
\begin{split}
 & \mathbf{z}^{\boldsymbol\epsilon}_{t}(\cdot , y) = \Upsilon(\mathbf{\hat{z}}^{\boldsymbol\epsilon}_{t}(\cdot , y))\ \forall  y\in B \\
 & \mathbf{y}_{1,t}^{\boldsymbol\epsilon} =  \Upsilon(\mathbf{\hat{y}}_{1,t}^{\boldsymbol\epsilon})
\end{split}
\right.
\end{equation*}

\noindent With this choice of function and by inequality (\ref{ininv}) we have that 
\begin{equation}\label{R}
\mathcal{R}= E + C_t^{N,\boldsymbol\epsilon}+ o\left(\frac{1}{N}\right) .
\end{equation}

\noindent We now have to control the error term $E$. To do so we will use  a direct consequence of  the concentration result proved in Corollary 3.5 of \cite{BGK} (adapted from a result from \cite{MS}):
\begin{prop}\label{concentration}
Let $V$ satisfy Hypothesis \ref{hypo} and  $T_t$ is as in (\ref{potentiel}).Then there exist  constants $c$ , $c'$ and $s_0$ such that for  $N$ large  enough , $s \geq s_0 \sqrt{\frac{\log N}{N}}$, and for any $\boldsymbol\epsilon = \mathbf{N}/N \in \tilde{\mathcal{E}}$, $t\in [0;1]$  we have  
\begin{equation}
\mathbb{P}_{T_t,B}^{N,\boldsymbol\epsilon} \left( \sup_{\substack{\phi\in C_c^1(B) \\ \norm{\phi'}_{\infty} \leq 1}} \left| \int \phi(x) d(L_N - \mu_{V}^{\boldsymbol\epsilon})(x) \right| \geq   s   \right)\leq \exp( - c N^2 s^2) + \exp( - c' N^2 ) .
\end{equation} 
\end{prop}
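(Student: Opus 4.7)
The plan is to show that the proposition follows from the concentration result of \cite[Corollary 3.5]{BGK} (itself an adaptation of \cite{MS}), with an additional check that the constants can be chosen uniformly in $(\boldsymbol\epsilon,t)\in\tilde{\mathcal{E}}\times[0;1]$.

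First I would reduce the supremum in the statement to a $1$-Wasserstein type quantity. Any $\phi\in C_c^1(B)$ with $\norm{\phi'}_\infty\leq 1$ is $1$-Lipschitz, and since $\mu_V^{\boldsymbol\epsilon}$ is supported in $A^{\boldsymbol\epsilon}\subset B^\delta$ and $L_N$ lies inside $B^\delta$ up to an event of probability $\exp(-c' N^2)$ (by Lemma \ref{GDP} and Remark \ref{controle}), Kantorovich--Rubinstein duality shows that on this good event the supremum coincides with $W_1(L_N,\mu_V^{\boldsymbol\epsilon})$. The complementary event contributes the $\exp(-c'N^2)$ term in the target bound.

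The main ingredient is then the standard energy method. Writing the joint density as $(Z_{T_t,B}^{N,\boldsymbol\epsilon})^{-1}\exp(-N^2 \mathcal{E}_{T_t}^{\boldsymbol\epsilon}(L_N)+R_N)$, where $\mathcal{E}_{T_t}^{\boldsymbol\epsilon}$ is the energy functional from (\ref{energyII}) and $R_N$ collects the self-interaction corrections, one exploits the fact that $\mu_V^{\boldsymbol\epsilon}$ is the unique minimizer on $\mathcal{M}_1^{\boldsymbol\epsilon}(B)$, via the characterization (\ref{caracterisation}). A Hessian computation at the minimum, in the spirit of \cite{MS}, yields a quadratic coercivity estimate
\begin{equation*}
\mathcal{E}_{T_t}^{\boldsymbol\epsilon}(\mu)-\mathcal{E}_{T_t}^{\boldsymbol\epsilon}(\mu_V^{\boldsymbol\epsilon}) \geq c_0 \, W_1(\mu,\mu_V^{\boldsymbol\epsilon})^2
\end{equation*}
for probability measures $\mu\in\mathcal{M}_1^{\boldsymbol\epsilon}(B)$ whose density is regularized at scale $\sqrt{\log N/N}$, the regularization being needed to control the logarithmic singularity in the interaction. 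Combined with the matching lower bound on the partition function obtained by localizing a Riemann sum around $\mu_V^{\boldsymbol\epsilon}$, this produces the Gaussian tail $\exp(-cN^2 s^2)$ for $s\geq s_0\sqrt{\log N/N}$, which is exactly the content of \cite[Corollary 3.5]{BGK}.

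The main technical obstacle is uniformity of the constants $c$, $c'$ and $s_0$ over $(\boldsymbol\epsilon,t)\in\tilde{\mathcal{E}}\times[0;1]$. By Lemma \ref{regularity}, the edges $\alpha_{h,\pm}^{\boldsymbol\epsilon}$ and the density $S^{\boldsymbol\epsilon}$ of $\mu_V^{\boldsymbol\epsilon}$ depend smoothly on $\boldsymbol\epsilon$, and $S^{\boldsymbol\epsilon}$ remains bounded below away from zero on each $A_h^{\boldsymbol\epsilon}$; similarly $T_t$ is affine in $t$ with fixed analytic coefficients. Hence the Hessian quadratic form at the minimizer varies continuously in $(\boldsymbol\epsilon,t)$ and its smallest positive eigenvalue is bounded below uniformly on the compact set $\overline{\tilde{\mathcal{E}}}\times[0;1]$, possibly after further shrinking $\tilde{\mathcal{E}}$. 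The same compactness argument applied to Remark \ref{controle} gives a uniform rate $c'$ in the escape probability and a uniform threshold $s_0$.
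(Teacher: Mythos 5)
Your proposal follows essentially the same route as the paper: the paper gives no proof of Proposition \ref{concentration} beyond invoking Corollary 3.5 of \cite{BGK} (adapted from \cite{MS}), which is precisely the result your energy-method sketch reconstructs, and your uniformity check in $(\boldsymbol\epsilon,t)$ via Lemma \ref{regularity} is consistent with that citation. One minor remark: the reduction to a good event is unnecessary and slightly misrated --- under $\mathbb{P}_{T_t,B}^{N,\boldsymbol\epsilon}$ all eigenvalues lie in $B$ by definition (and Lemma \ref{GDP} would in any case only give a rate $e^{-cN}$, not $e^{-c'N^2}$), the second error term $\exp(-c'N^2)$ coming instead from the concentration estimate itself.
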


\noindent In order to control the error term we will make use of the following  three loop equations.  We recall that $M_N = N (L_N - N \mu_{V}^{\boldsymbol\epsilon})$ and we will denote $\tilde{M}_N = N (L_N - \mathbb{E}_{T_t,B}^{N,\boldsymbol\epsilon} [L_N])$.

\begin{lem}
Let $f \in C^1(B)$ such that for all $ 0 \leq h \leq g$  ,  $ f(\beta_{h,-}) = f( \beta_{h,+})=0$. Then
\begin{equation}\label{loop1}
\mathbb{E}_{T_t,B}^{N,\boldsymbol\epsilon} \left(  M_N(\Xi f) + \left(1- \frac{\beta}{2}\right) L_N(f') +  \frac{1}{N}  \left[  \iint \left( \frac{\beta}{2}  \frac{f(x)-f(y)}{x-y}+\partial_1 T_t(x,y) f(x) \right )dM_N(x) dM_N(y)   \right] \right) = 0 .
\end{equation}

\noindent If  $k_1$ is also in $ C^1(B)$ then 
\begin{equation}\label{loop2}
\begin{split}
\mathbb{E}_{T_t,B}^{N,\boldsymbol\epsilon} & \bigg( L_N(f k'_1) +  M_N(\Xi f)\tilde{M}_N(k_1) + \left(1- \frac{\beta}{2}\right) L_N(f')\tilde{M}_N(k_1)  \\
+&  \frac{1}{N}  \left[  \iint \left( \frac{\beta}{2}  \frac{f(x)-f(y)}{x-y}+\partial_1 T_t(x,y) f(x) \right )dM_N(x) dM_N(y)   \right] \tilde{M}_N(k_1) \bigg) = 0 .
\end{split}
\end{equation}

\noindent If $k_2$ and $k_3$ are also  in $ C^1(B)$ then
\begin{equation}\label{loop3}
\begin{split}
\mathbb{E}_{T_t,B}^{N,\boldsymbol\epsilon} & \bigg( \sum_{\sigma} L_N(fk'_{\sigma(1)})\tilde{M}_N(k_{\sigma(2)})\tilde{M}_N(k_{\sigma(3)}) +  M_N(\Xi f)\tilde{M}_N(k_1)\tilde{M}_N(k_2)\tilde{M}_N(k_3)   \\
+&  \frac{1}{N}  \left[   \iint \left( \frac{\beta}{2}  \frac{f(x)-f(y)}{x-y}+\partial_1 T_t(x,y) f(x) \right )dM_N(x) dM_N(y)   \right] \tilde{M}_N(k_1)\tilde{M}_N(k_2)\tilde{M}_N(k_3)\\
+& \left(1- \frac{\beta}{2}\right) L_N(f')\tilde{M}_N(k_1)\tilde{M}_N(k_2)\tilde{M}_N(k_3)  \bigg) = 0 .
\end{split}
\end{equation}

\noindent where the sum ranges over the permutations of $\mathfrak{S}_3$
\end{lem}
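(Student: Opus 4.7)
All three loop equations admit a unified derivation via integration by parts against the density $\rho_t$ of $\mathbb{P}_{T_t,B}^{N,\boldsymbol\epsilon}$. Indexing all eigenvalues by a single flat index $k$ and considering the vector field $v_k(\boldsymbol\lambda) = f(\lambda_k)\, G(\boldsymbol\lambda)$ for a smooth symmetric weight $G$, the hypothesis $f(\beta_{h,-})=f(\beta_{h,+})=0$ makes $v$ vanish on $\partial \mathbf{B}$, so
$$\mathbb{E}_{T_t,B}^{N,\boldsymbol\epsilon}\!\left[\sum_k f'(\lambda_k)\, G + \sum_k f(\lambda_k)\,\partial_k G + \sum_k f(\lambda_k)\,G\,\partial_k \log \rho_t\right] = 0.$$
A direct computation gives $\partial_k \log \rho_t = \beta \sum_{J\neq k}(\lambda_k-\lambda_J)^{-1} - \sum_J \partial_1 T_t(\lambda_k,\lambda_J)$, and symmetrizing the double sum in $(k,J)$ rewrites the last contribution as $N^2\, G \iint K\, dL_N\, dL_N - \tfrac{\beta N}{2}\, G\, L_N(f')$, with $K(x,y) = \tfrac{\beta}{2}\tfrac{f(x)-f(y)}{x-y} + \partial_1 T_t(x,y) f(x)$. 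Collecting the $L_N(f')$ terms and dividing by $N$, the identity becomes
$$\mathbb{E}\!\left[\left(1-\tfrac{\beta}{2}\right) G\, L_N(f') + \tfrac{1}{N}\sum_k f(\lambda_k)\, \partial_k G + N\, G \iint K\, dL_N\, dL_N\right] = 0.$$

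For \eqref{loop1} I would take $G \equiv 1$, so the middle term vanishes. Expanding $L_N = \mu_V^{\boldsymbol\epsilon} + \tfrac{1}{N} M_N$ inside $\iint K\, dL_N\, dL_N$ produces three contributions: the $O(1)$ piece $\iint K\, d\mu_V^{\boldsymbol\epsilon}\, d\mu_V^{\boldsymbol\epsilon}$ vanishes by the equilibrium relation \eqref{equilibre}; the $O(1/N)$ linear-in-$M_N$ piece, after relabeling variables and using $\partial_1 T_t(z,x) = \partial_2 T_t(x,z)$, collapses into $\tfrac{1}{N}\, M_N(\Xi f)$; and the $O(1/N^2)$ quadratic piece produces the error term $\tfrac{1}{N^2}\iint K\, dM_N\, dM_N$. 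Multiplying back by $N$ yields exactly \eqref{loop1}. The only non-mechanical step is the second one, namely the identity $\int [K(x,y) + K(y,x)]\, d\mu_V^{\boldsymbol\epsilon}(y) = \Xi f(x)$; this is a direct consequence of the symmetry of $T_t$ and matches the definition of $\Xi$.

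Equations \eqref{loop2} and \eqref{loop3} follow by running the same argument with $G = \tilde M_N(k_1)$ and $G = \tilde M_N(k_1)\tilde M_N(k_2)\tilde M_N(k_3)$ respectively. In the first case $\partial_k G = k_1'(\lambda_k)$, so $\tfrac{1}{N}\sum_k f(\lambda_k)\, \partial_k G = L_N(f k_1')$, which is the new term appearing in \eqref{loop2}; every other term from the $G\equiv 1$ case is simply multiplied by $\tilde M_N(k_1)$. In the second case the product rule gives $\partial_k G = \sum_{i=1}^3 k_i'(\lambda_k) \prod_{j\neq i} \tilde M_N(k_j)$, which accounts for the sum over permutations $\sigma$ of $L_N(f k'_{\sigma(1)})\,\tilde M_N(k_{\sigma(2)})\,\tilde M_N(k_{\sigma(3)})$ in \eqref{loop3}, and the remaining terms of \eqref{loop1} are multiplied by $\tilde M_N(k_1)\tilde M_N(k_2)\tilde M_N(k_3)$.
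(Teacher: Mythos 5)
Your derivation is correct in substance, and for (\ref{loop1}) it is essentially the paper's own argument: integration by parts with the field $f(\lambda_k)$ gives the base identity (\ref{loop}), and recentering $L_N$ around $\mu_V^{\boldsymbol\epsilon}$ through the equilibrium relation (\ref{equilibre}), together with the symmetry $\partial_1T_t(y,x)=\partial_2T_t(x,y)$, turns the linear-in-$M_N$ term into $M_N(\Xi f)$. For (\ref{loop2}) and (\ref{loop3}) your route is genuinely different: the paper keeps the field $f(\lambda_k)$ fixed and perturbs the potential, $T_t\mapsto T_t-\sum_i\delta_i\,(k_i(x)+k_i(y))$, differentiating (\ref{loop}) at $\delta=0$ so that each insertion $\tilde M_N(k_i)$ arises as a log-derivative of the normalized density, whereas you integrate by parts directly against the weighted fields $f(\lambda_k)\tilde M_N(k_1)$ and $f(\lambda_k)\tilde M_N(k_1)\tilde M_N(k_2)\tilde M_N(k_3)$. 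The two mechanisms are equivalent; yours has the small advantage of producing the centered insertions immediately at every order, while at third order the paper's recipe produces uncentered insertions (equivalently, derivatives of the centering constants) that must be cleaned up using the lower-order equations, a step the paper leaves implicit.

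Two points of bookkeeping. First, a sign: you write $\partial_k\log\rho_t=\beta\sum_{J\ne k}(\lambda_k-\lambda_J)^{-1}-\sum_J\partial_1T_t(\lambda_k,\lambda_J)$ but then take $K(x,y)=\frac{\beta}{2}\frac{f(x)-f(y)}{x-y}+\partial_1T_t(x,y)f(x)$; these two lines are inconsistent with each other. Under the convention for which $T_0(x,y)=-(V(x)+V(y))$ reproduces $e^{-N\sum_i V(\lambda_i)}$ (the interaction enters the exponent as $+\frac{1}{2}\sum T_t$, which is also what (\ref{caracterisation}), (\ref{equilibre}) and the definition of $\Xi$ require), the log-derivative carries a plus sign and your $K$ is then the right one; fix one convention and keep it. Second, and more substantively, the combinatorial factor in (\ref{loop3}): the product rule gives $\partial_kG=\sum_{i=1}^{3}k_i'(\lambda_k)\prod_{j\ne i}\tilde M_N(k_j)$, i.e.\ each distinct term $L_N(fk_i')\prod_{j\ne i}\tilde M_N(k_j)$ exactly once, while the sum over all of $\mathfrak{S}_3$ in the statement counts each such term twice, since $\tilde M_N(k_{\sigma(2)})\tilde M_N(k_{\sigma(3)})$ is symmetric in $\sigma(2),\sigma(3)$. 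What your argument proves is therefore the identity with $\frac{1}{2}\sum_{\sigma\in\mathfrak{S}_3}$, and you cannot silently identify it with the stated one: the difference of the two versions is $\mathbb{E}_{T_t,B}^{N,\boldsymbol\epsilon}\big(\sum_i L_N(fk_i')\prod_{j\ne i}\tilde M_N(k_j)\big)$, which does not vanish in general. The three-term normalization is the correct one (the paper's own recipe, differentiating (\ref{loop}) in $\delta_1,\delta_2,\delta_3$, yields it too), so the mismatch lies in the statement's normalization rather than in your computation; it is harmless for the only use of (\ref{loop3}), the fourth-moment bound in Lemma \ref{mnexp}, where constants are absorbed into $C$, but you should flag it rather than pass over it.
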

\begin{proof}
\noindent Using integration by parts we show 
\begin{equation}\label{loop}
\mathbb{E}_{T_t,B}^{N,\boldsymbol\epsilon} \left( \iint \left( \frac{\beta}{2}    \frac{f(x)-f(y)}{x-y}+\partial_1 T_t(x,y) f(x) \right )dL_N(x) dL_N(y)    +\frac{1}{N} \left(1- \frac{\beta}{2}\right) L_N(f')    \right) = 0
\end{equation}

\noindent we deduce the first loop equation by using the definition of $\Xi$.\\

\noindent The second loop equation is obtained by replacing in (\ref{loop}) $T_t(x,y)$ by $T_t(x,y) - \delta_1 (k_1(x) + k_1(y))$ and differentiating at $\delta=0$ .\\

\noindent  The third one is obtained by replacing  in (\ref{loop}) $T_t(x,y)$ by $T_t(x,y) - \delta_1 (k_1(x) + k_1(y)) - \delta_2 (k_2(x) + k_2(y)) - \delta_3 (k_3(x) + k_3(y)) $ and differentiating at $\delta_1=\delta_2=\delta_3=0$ .

\end{proof}

\noindent We will now put in use these loop equations and the concentration result of Proposition \ref{concentration} to obtain some estimates.

\begin{lem}\label{mnexp}
Let $k$ be an analytic function on $U$. Then for some constant $C$:
\begin{equation*}
\begin{split}
\left| \right. \mathbb{E}_{T_t,B}^{N,\boldsymbol\epsilon} & \left. \left( M_N(k) \right) \right| \leq C   \log N \ \norm{k}_{C^{6}(B)} . \\
 \mathbb{E}_{T_t,B}^{N,\boldsymbol\epsilon} & \left( {M}_N(k)^2 \right) \ \leq C   (\log N)^2 \ \norm{k}_{C^{6}(B)}^2 . \\
\mathbb{E}_{T_t,B}^{N,\boldsymbol\epsilon} & \left( {M}_N(k)^4 \right)  \leq C   (\log N)^4 \ \norm{k}_{C^{6}(B)}^4 .
\end{split}
\end{equation*}
\end{lem}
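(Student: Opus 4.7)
The strategy is a three-step bootstrap driven by the loop equations \eqref{loop1}, \eqref{loop2}, \eqref{loop3}, with Proposition \ref{concentration} providing the rough a priori input. Take $f = \Upsilon(\Xi^{-1}(k))$ with $\Xi^{-1}$ as in Lemma \ref{thm:inv} and $\Upsilon$ the smooth cutoff defined above. Then $f \in C_c^\infty(B)$ vanishes on $\partial B$ so that the loop equations apply, $\Xi f = k + \kappa_k$ on $B^\delta$ with $\kappa_k$ piecewise constant on the $B_h$'s, and $\norm{f}_{C^j(B)} \le C_j \norm{k}_{C^{j+2}(B)}$ by \eqref{norme}. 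The fixed filling fractions force $M_N(\mathbf{1}_{B_h}) \equiv 0$, hence $M_N(\Xi f) = M_N(k)$ up to an exponentially small term coming from the cutoff region, controlled by Lemma \ref{GDP} as in \eqref{ininv}. Integrating the tail of Proposition \ref{concentration} (using the deterministic bound $|M_N(\phi)| \le 2N\norm{\phi}_\infty$ on the negligible exceptional set) yields the rough estimates $\mathbb{E}(|M_N(\phi)|^p) \le C_p (N\log N)^{p/2}\norm{\phi}_{C^1}^p$ for $p \in \{1,2,3,4\}$.

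From \eqref{loop1}, $\mathbb{E}(M_N(k))$ equals $-(1-\beta/2)\mathbb{E}(L_N(f'))$, which is $O(\norm{k}_{C^3})$, plus $N^{-1}$ times the expectation of a smooth quadratic functional $\iint g_f\,dM_N\,dM_N$. Expanding the symmetric kernel $g_f$ spectrally and applying the rough $p=2$ bound gives $O(\norm{k}_{C^4}\,N\log N)$ for that term, so $|\mathbb{E}(M_N(k))| \le C\log N\,\norm{k}_{C^4}$. Next, \eqref{loop2} with $k_1 = k$ isolates $\mathbb{E}(M_N(\Xi f)\,\tilde M_N(k)) = \mathrm{Var}(M_N(k))$. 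Splitting $L_N(f') = \mu_V^{\boldsymbol\epsilon}(f') + N^{-1}M_N(f')$ annihilates the deterministic leading part against the centred $\tilde M_N(k)$, and Cauchy--Schwarz against the $p=2,4$ a priori bounds controls the remaining linear and cubic-in-$M_N$ contributions; one obtains $\mathrm{Var}(M_N(k)) \le C(\log N)^2\norm{k}_{C^5}^2$, which combined with the mean bound gives the second stated inequality. Finally \eqref{loop3} with $k_1=k_2=k_3=k$ isolates $\mathbb{E}(\tilde M_N(k)^4)$; every other term is absorbed by Hölder combinations of the already-proved moment bounds, and the upgraded variance is precisely what is needed to dominate the dangerous $N^{-1}$(cubic-in-$M_N$)$\cdot\tilde M_N(k)$ factor, producing the fourth-moment bound $\mathbb{E}(M_N(k)^4) \le C(\log N)^4 \norm{k}_{C^6}^4$.

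The main obstacle is closing the bootstrap: in each loop equation the $N^{-1}\times$quadratic-in-$M_N$ term is, with only the concentration-level variance $O(N\log N)$, of order $\log N$ rather than $O(1)$, which is precisely why each of the three stated bounds loses an extra $\log N$ with respect to the Gaussian heuristic. What makes the scheme work is the filling-fraction identity $M_N(\mathbf{1}_{B_h})\equiv 0$, which allows the replacement $M_N(\Xi f) = M_N(k)$ despite $\Xi^{-1}$ only inverting $\Xi$ modulo a piecewise constant; the two-derivative loss at each application of Lemma \ref{thm:inv} explains why the final bound requires $\norm{k}_{C^6}$ after three iterations.
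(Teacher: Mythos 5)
Your proposal follows essentially the same route as the paper's proof: apply the three loop equations to $f=\Upsilon(\Xi^{-1}k)$, use the piecewise constancy of $\kappa_k$ together with the fixed filling fractions and the large-deviation cutoff to replace $M_N(\Xi f)$ by $M_N(k)$ up to exponentially small errors, control the $N^{-1}$-quadratic terms by Fourier decomposition combined with the concentration event of Proposition \ref{concentration} (the paper bounds these pointwise on the good set rather than through a priori moment bounds on Fourier modes, which is the same estimate), and close each moment bound by a self-bounding inequality, passing between $M_N$ and $\tilde M_N$ via the first bound. The only inaccuracy is cosmetic: your intermediate norm indices ($C^3$, $C^4$, $C^5$) undercount, since the Fourier--Sobolev control of the quadratic kernel already requires roughly four derivatives of $f$ and hence $\norm{k}_{C^6(B)}$ at every stage (this, rather than an iterated two-derivative loss over three inversions of $\Xi$, is where $C^6$ comes from), but as all your final bounds are stated with $\norm{k}_{C^6(B)}$ this does not affect the result.
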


\begin{proof}
We apply (\ref{loop1}) to $f = \Upsilon(\Xi^{-1} k)$. Using (\ref{ininv}) we obtain

\begin{equation*}
\begin{split}
\mathbb{E}_{T_t,B}^{N,\boldsymbol\epsilon}  \bigg(  M_N(k)  + & \left. \frac{1}{N}  \left[\frac{\beta}{2}   \iint \left( \frac{\Upsilon(\Xi^{-1} k)(x)-\Upsilon(\Xi^{-1} k)(y)}{x-y}+\partial_1 T_t(x,y) \Upsilon(\Xi^{-1} k)(x) \right )dM_N(x) dM_N(y)   \right]  \right. \\
 + &    \left(1- \frac{\beta}{2}\right) L_N((\Upsilon(\Xi^{-1} k))')  \bigg) = O \left(N \norm{ k}_{L^{\infty}(B)} \exp(- N \eta) \right) .
\end{split}
\end{equation*}

\noindent Let 
\begin{equation*}
\begin{split}
\Lambda(k) = \frac{1}{N}  \Bigg[\frac{\beta}{2}   \iint \Big( \frac{\Upsilon(\Xi^{-1} k)(x)-\Upsilon(\Xi^{-1} k)(y)}{x-y}    + &  \partial_1 T_t(x,y) \Upsilon(\Xi^{-1} k)(x) \Big) dM_N(x) dM_N(y)   \Bigg]   \\
 + &   \left(1- \frac{\beta}{2}\right) L_N((\Upsilon(\Xi^{-1} k))') .
\end{split}
\end{equation*}
\noindent Denoting by $\mathcal{F}$ the fourier transform operator (for functions of either one or several variables) we have 
\begin{equation*}
\begin{split}
\iint  \frac{\Upsilon(\Xi^{-1} k)(x)-\Upsilon(\Xi^{-1} k)(y)}{x-y} &  dM_N(x) dM_N(y)\\
& = i \int \left( \int_0^1 d\alpha \int e^{i \alpha \xi x} dM_N(x) \int e^{i (1-\alpha) \xi y} dM_N(y) \right) \mathcal{F}({\Upsilon(\Xi^{-1} k)})(\xi) \xi d\xi
\end{split}
\end{equation*}
\noindent and  
\begin{equation*}
\begin{split}
\iint  \partial_1 T_t(x,y) \Upsilon(\Xi^{-1} k)(x) &  dM_N(x) dM_N(y)\\
& =  \int \left(  \int e^{i  \xi x} dM_N(x) \int e^{i \zeta y} dM_N(y) \right) \mathcal{F}(\partial_1 T_t \  \Upsilon(\Xi^{-1} k))(\xi, \zeta)  d\xi d\zeta .
\end{split}
\end{equation*}

\noindent Now on the set $\Omega = \left\lbrace \sup_{\substack{\phi\in C_c^1(B) \\ \norm{\phi'}_{\infty} \leq 1}} \left| \int \phi(x) d(L_N - \mu_{V}^{\boldsymbol\epsilon})(x) \right| \leq s_0 \sqrt{\frac{\log N}{N}} \right\rbrace$ we have 
\begin{equation*}
\begin{split}
\left| \int e^{i  \xi x} dM_N(x)   \right| & \leq \left| \int \Upsilon(e^{i \xi \cdot })(x) dM_N(x)   \right| + 2 N e^{-N\eta} \\
& \leq C (1 + |\xi|) \sqrt{N \log N} + 2 N e^{-N\eta}
\end{split}
\end{equation*}
\noindent consequently, on this set

\begin{equation*}
\begin{split}
\Bigg| \iint  \frac{\Upsilon(\Xi^{-1} k)(x)-\Upsilon(\Xi^{-1} k)(y)}{x-y} &  dM_N(x) dM_N(y)\Bigg| \\
& \leq C (N \log N) \int  |\mathcal{F}({\Upsilon(\Xi^{-1} k)})(\xi)| (1 + |\xi|)^3 d\xi + O(N e^{-N\eta}) .
\end{split}
\end{equation*}

\noindent The integral is bounded by the norm $\mathcal{H}^4(\mathbb{R})$ of $\Upsilon(\Xi^{-1} k)$ and we have:

\begin{equation*}
\norm{\Upsilon(\Xi^{-1} k)}_{\mathcal{H}^4(\mathbb{R})} \leq  C  \Big(
\norm{\Upsilon(\Xi^{-1} k)}_{\mathcal{L}^2(\mathbb{R})} + \norm{\big( \Upsilon(\Xi^{-1} k)\big)^{(4)}}_{\mathcal{L}^2(\mathbb{R})} \Big) .
\end{equation*}

\noindent As ${\Upsilon(\Xi^{-1} k)}$  has its support in $B$, the $\mathcal{L}^2(\mathbb{R})$ norm can be in turn controlled by the $\mathcal{L}^{\infty}(\mathbb{R})$ norm and we can use (\ref{norme}). Similarly on $\Omega$  we have 
\begin{equation*}
\Bigg| \iint \partial_1 T_t(x,y) \Upsilon(\Xi^{-1} k)(x)  dM_N(x) dM_N(y)\Bigg| \leq C(N \log N) \norm{k}_{C^6(B)} + O(N e^{-N \eta})
\end{equation*}

\noindent Note that here the constant depends on $T_t$ but we can make it uniform in $t$ and $\boldsymbol\epsilon \in \tilde{ \mathcal{E}}$.

\noindent On $\Omega^c$ we can use the trivial bound 
\begin{equation*}
\left| \int e^{i  \xi x} dM_N(x)   \right| \leq 2 N
\end{equation*}

\noindent to prove that $|\Lambda(k)|$ is bounded everywhere by $C N    \norm{k}_{C^{6}(B)}$. By  using Proposition \ref{concentration} we obtain

\begin{equation*}
| \mathbb{E}_{T_t,B}^{N,\boldsymbol\epsilon}  (\Lambda (k)) | \leq  C \Big( (\log N) \norm{k}_{C^6(B)} +N e^{-c s_0^2 N \log N} \norm{k}_{C^{6}(B)} \Big)
\end{equation*}

\noindent and we can conclude the proof of the first inequality.

\noindent To prove the second inequality, using (\ref{loop2}) and (\ref{ininv}) we have 

\begin{equation*}
\mathbb{E}_{T_t,B}^{N,\boldsymbol\epsilon} \Big(M_N(k) \tilde{M}_N(k)\Big) =  - \mathbb{E}_{T_t,B}^{N,\boldsymbol\epsilon} \Big(
\Lambda(k)\tilde{M}_N(k) + L_N ( k' \  \Upsilon(\Xi^{-1} k)) \Big) + O (N^2 \norm{ k}_{L^{\infty}(B)}^2 e^{-N \eta}) .
\end{equation*}

\noindent By splitting on $\Omega$ and $\Omega^c$ we see that 
\begin{equation*}
\begin{split}
\Big| \mathbb{E}_{T_t,B}^{N,\boldsymbol\epsilon} \Big(M_N(k) \tilde{M}_N(k)\Big)   \Big| & \leq C \Big( \log N \norm{ k}_{C^{6}(B)} \mathbb{E}_{T_t,B}^{N,\boldsymbol\epsilon} \Big(|\tilde{M}_N(k)|\Big) +  \norm{k}_{C^{6}(B)}^2 \Big(1+N^2 e^{-c s_0^2 N \log N}  + N^2  e^{-N \eta}\Big) \Big) 
\end{split}
\end{equation*}

\noindent We notice that $M_N(k) - \tilde{M}_N(k)  = \mathbb{E}_{T_t,B}^{N,\boldsymbol\epsilon} ( M_N(k)) $ is deterministic and that $\mathbb{E}_{T_t,B}^{N,\boldsymbol\epsilon} ( \tilde{M}_N(k))$  vanishes. The term on the left is thus equal to $\Big| \mathbb{E}_{T_t,B}^{N,\boldsymbol\epsilon} \Big(\tilde{M}_N(k)^2\Big)   \Big|$ and we obtain 

\begin{equation*}
 \mathbb{E}_{T_t,B}^{N,\boldsymbol\epsilon} \Big( \tilde{M}_N(k)^2\Big)    \leq C \Big( \log N \norm{ k}_{C^{6}(B)} \sqrt{\mathbb{E}_{T_t,B}^{N,\boldsymbol\epsilon} \Big(\tilde{M}_N(k)^2\Big)} +  \norm{k}_{C^{6}(B)}^2 \Big(1+N^2 e^{-c s_0^2 N \log N}  + N^2  e^{-N \eta}\Big) \Big) .
\end{equation*}

\noindent Elementary manipulations show that this implies that $\mathbb{E}_{T_t,B}^{N,\boldsymbol\epsilon} \Big( \tilde{M}_N(k)^2\Big)   \leq C (\log N)^2 \norm{ k}_{C^{6}(B)}^2$ with a different constant.\\

\noindent  Writing
\begin{equation*}
\begin{split}
\mathbb{E}_{T_t,B}^{N,\boldsymbol\epsilon} \Big( {M}_N(k)^2\Big) & \leq 2 \ \mathbb{E}_{T_t,B}^{N,\boldsymbol\epsilon} \Big( \tilde{M}_N(k)^2\Big) + 2 \ \mathbb{E}_{T_t,B}^{N,\boldsymbol\epsilon} \Big( (\tilde{M}_N(k) - M_N(k))^2\Big) \\
& =  2 \  \mathbb{E}_{T_t,B}^{N,\boldsymbol\epsilon} \Big( \tilde{M}_N(k)^2\Big) + 2 \ \mathbb{E}_{T_t,B}^{N,\boldsymbol\epsilon} ( M_N(k))^2
\end{split}
\end{equation*}

\noindent and using the first inequality yields to the second one.\\

\noindent Finally, to prove the last inequality, (\ref{loop3}) gives :
\begin{equation*}
\mathbb{E}_{T_t,B}^{N,\boldsymbol\epsilon} \Big( \tilde{M}_N(k)^4\Big)    \leq C  \Big( \log N \norm{ k}_{C^{6}(B)} {\mathbb{E}_{T_t,B}^{N,\boldsymbol\epsilon} \Big(\tilde{M}_N(k)^4\Big)}^{\frac{3}{4}} +  \norm{k}_{C^{6}(B)}^4 \Big((\log N)^2+N^4 e^{-c s_0^2 N \log N}  + N^4  e^{-N \eta}\Big) \Big) 
\end{equation*}

\noindent which shows $\mathbb{E}_{T_t,B}^{N,\boldsymbol\epsilon} \Big( \tilde{M}_N(k)^4\Big)   \leq C (\log N)^4 \norm{ k}_{C^{6}(B)}^4$. We conclude by using the identity 

\begin{equation*}
\mathbb{E}_{T_t,B}^{N,\boldsymbol\epsilon} \Big( {M}_N(k)^4\Big)  \leq 8 \ \mathbb{E}_{T_t,B}^{N,\boldsymbol\epsilon} \Big( \tilde{M}_N(k)^4\Big) + 8 \ \mathbb{E}_{T_t,B}^{N,\boldsymbol\epsilon} \Big( (\tilde{M}_N(k) - M_N(k))^4 \Big) .
\end{equation*}

\end{proof}
\noindent We will need a last lemma to estimate the error $E$.
\begin{lem}\label{controlefourier}
\noindent  There exists  a constant $C$ such that for   $\phi \in C^{\infty}(\mathbb{R})$ (resp. $\psi \in C^{\infty}(\mathbb{R}^2)$, $\chi \in C^{\infty}(\mathbb{R}^3)$)  of compact support in $B$ (resp. $B^2$, $B^3$)   we have 
\begin{equation*}
\begin{split}
 \mathbb{E}_{T_t,B}^{N,\boldsymbol\epsilon} \bigg(&\bigg| \int  \phi(x) dM_N(x)  \bigg| \bigg)  \leq C \norm{\phi}_{C^{6}(B)} \log N \\
 \mathbb{E}_{T_t,B}^{N,\boldsymbol\epsilon} \bigg(\bigg| \iint  \frac{\phi(x) - \phi(y)}{x-y} &dM_N(x) dM_N(y) \bigg| \bigg)    \leq C \norm{\phi}_{C^{14}(B)} \log N^2 \\
 \mathbb{E}_{T_t,B}^{N,\boldsymbol\epsilon} \bigg( \bigg| \iint \psi(x,y) &dM_N(x) dM_N(y) \bigg| \bigg)   \leq C \norm{\psi}_{C^{14}(B^2)} \log N^2 \\
\mathbb{E}_{T_t,B}^{N,\boldsymbol\epsilon} \bigg( \bigg| \iiint \chi(x,y,z) &dM_N(x) dM_N(y) dM_N(z)  \bigg| \bigg)   \leq C \norm{\chi}_{C^{21}(B^3)} \log N^3 \\
 \mathbb{E}_{T_t,B}^{N,\boldsymbol\epsilon} \bigg( \bigg| \iiint \frac{\psi(x,y) - \psi(z,y)}{x-z} &dM_N(x) dM_N(y) dM_N(z) \bigg| \bigg)   \leq C \norm{\psi}_{C^{21}(B^2)} \log N^3 
\end{split}
\end{equation*}

\end{lem}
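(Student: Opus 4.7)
The strategy is to reduce all five inequalities to the moment bounds already furnished by Lemma \ref{mnexp}, by Fourier-decomposing the test functions and exploiting the fact that each complex exponential $e^{i\xi\cdot}$ extends to a holomorphic function on $U$ with $\|e^{i\xi\cdot}\|_{C^6(B)} \le C(1+|\xi|)^6$. Lemma \ref{mnexp} therefore gives, for each $\xi\in\mathbb R$,
\begin{equation*}
|\mathbb E_{T_t,B}^{N,\boldsymbol\epsilon}[M_N(e^{i\xi\cdot})]| \le C(\log N)(1+|\xi|)^6, \qquad \mathbb E[|M_N(e^{i\xi\cdot})|^{2q}] \le C(\log N)^{2q}(1+|\xi|)^{12q} \ \ (q=1,2),
\end{equation*}
and Hölder's inequality then produces $\mathbb E\bigl|\prod_{k=1}^p M_N(e^{i\xi_k\cdot})\bigr| \le C(\log N)^p \prod_k (1+|\xi_k|)^6$ for $p=1,2,3$.

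For inequality $(1)$, I would write $\phi(x)=\int\mathcal F\phi(\xi)e^{i\xi x}d\xi$ so that $M_N(\phi) = \int\mathcal F\phi(\xi)M_N(e^{i\xi\cdot})d\xi$ and apply the estimate above with $p=1$. For $(3)$, decomposing $\psi(x,y)=\iint\mathcal F\psi(\xi,\zeta)e^{i(\xi x+\zeta y)}d\xi d\zeta$ turns the double integral against $M_N^{\otimes 2}$ into $\iint\mathcal F\psi(\xi,\zeta)\,M_N(e^{i\xi\cdot})M_N(e^{i\zeta\cdot})d\xi d\zeta$, and the $p=2$ bound applies. Inequality $(4)$ is the analogous three-fold computation with $p=3$. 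For the divided-difference integrals $(2)$ and $(5)$, I would use the identity already employed in the proof of Lemma \ref{mnexp},
\begin{equation*}
\frac{e^{i\xi x}-e^{i\xi y}}{x-y} = i\xi\int_0^1 e^{i\alpha\xi x}e^{i(1-\alpha)\xi y}\,d\alpha,
\end{equation*}
to rewrite $\iint (\phi(x)-\phi(y))/(x-y)\,dM_N(x)\,dM_N(y)$ as $i\int_0^1 d\alpha\int \xi\,\mathcal F\phi(\xi) M_N(e^{i\alpha\xi\cdot})M_N(e^{i(1-\alpha)\xi\cdot})d\xi$, and analogously for the triple integral in $(5)$, where an extra factor of $\xi$ appears.

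Taking expectations in each case produces a frequency-space bound of the form $C(\log N)^p\int\prod_k(1+|\xi_k|)^{s_k}|\mathcal F(\text{test function})(\boldsymbol\xi)|\,d\boldsymbol\xi$. This is converted into the stated $C^m$ norm by means of the classical decay bound $(1+|\boldsymbol\xi|)^m|\mathcal F g(\boldsymbol\xi)| \le C\|g\|_{C^m}$ for compactly supported $g$, combined with the integrability of $(1+|\boldsymbol\xi|)^{-d-1}$ on $\mathbb R^d$; distributing enough derivatives to absorb the polynomial weights $(1+|\xi_k|)^6$ (plus one factor of $|\xi|$ in cases $(2)$ and $(5)$) and leave an integrable tail accounts exactly for the orders $6$, $14$ and $21$ that appear in the statement.

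The only technical nuisance is that the bounds of Lemma \ref{mnexp} implicitly rely on the concentration event $\Omega$ of Proposition \ref{concentration}; on $\Omega^c$ one must replace each factor $M_N(e^{i\xi\cdot})$ by the trivial deterministic bound $|M_N(e^{i\xi\cdot})|\le 2N$, as in the proof of Lemma \ref{mnexp}. Since $\Omega^c$ has probability $\exp(-cN^2(\log N)/N) + \exp(-c'N^2)$, the resulting contribution is super-exponentially small and is absorbed into the principal term, giving the required estimates uniformly in $t\in[0;1]$ and $\boldsymbol\epsilon\in\tilde{\mathcal E}$.
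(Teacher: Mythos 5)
Your argument is correct and is essentially the paper's own proof: Fourier-decompose onto complex exponentials (analytic on $U$, with $C^{6}(B)$ norm of order $(1+|\xi|)^{6}$), rewrite the divided differences via the identity $\frac{e^{i\xi x}-e^{i\xi y}}{x-y}=i\xi\int_0^1 e^{i\alpha\xi x}e^{i(1-\alpha)\xi y}d\alpha$, apply H\"older together with the moment bounds of Lemma \ref{mnexp}, and control the resulting weighted $L^1$ norm of the Fourier transform by the stated $C^m$ norms using the compact support. The only cosmetic difference is your final splitting on the concentration event: the conclusions of Lemma \ref{mnexp} are unconditional expectation bounds, so that step is already absorbed into that lemma and need not be repeated here.
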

\begin{proof} 
\noindent We will prove the last inequality as the other ones are simpler and can  be proved the  same way.

\begin{equation*}
\begin{split}
\iiint \frac{\psi(x,y) - \psi(z,y)}{x-z}& dM_N(x)  dM_N(y) dM_N(z)  \\&  = i \iint  \Big(\int_0^1 d\alpha M_N (e^{i \alpha \xi \cdot})  M_N(e^{i (1- \alpha) \xi \cdot})   M_N(e^{i  \zeta \cdot})\Big) \mathcal{F}(\partial_1\psi)(\xi,\zeta) d\xi d\zeta
\end{split} 
\end{equation*}

\noindent and by using Hölder inequality we obtain
\begin{equation*}
\begin{split}
&  \mathbb{E}_{T_t,B}^{N,\boldsymbol\epsilon} \bigg( \bigg|  \iiint  \frac{\psi(x,y) - \psi(z,y)}{x-z} dM_N(x) dM_N(y) dM_N(z) \bigg| \bigg)  \\
 & \leq  \iint \Big( \mathbb{E}_{T_t,B}^{N,\boldsymbol\epsilon} \Big( {M}_N(e^{i \alpha \xi \cdot})^4\Big)^{\frac{1}{4}} \mathbb{E}_{T_t,B}^{N,\boldsymbol\epsilon} \Big( {M}_N(e^{i (1- \alpha) \xi \cdot})^4\Big)^{\frac{1}{4}} \mathbb{E}_{T_t,B}^{N,\boldsymbol\epsilon} \Big( {M}_N(e^{i \zeta \cdot})^4\Big)^{\frac{1}{4}} \Big) |\xi| \mathcal{F}(\psi)(\xi,\zeta) d\xi d\zeta \\
 & \leq C (\log N)^3 \iint (1 + |\xi|^6)^2 (1 + |\zeta|^6) |\xi| | \mathcal{F}(\psi)(\xi,\zeta)| d\xi d\zeta
\end{split}
\end{equation*}

\noindent where we used the last identity of Lemma \ref{mnexp}. The last term is controled by the $H^{21}(\mathbb{R}^2)$ norm of $\psi$ and we have  
\begin{equation*}
\norm{\psi}_{\mathcal{H}^{21}(\mathbb{R}^2)} \leq  C  \Big(
\norm{\psi}_{\mathcal{L}^2(\mathbb{R}^2)} + \sup_{|\beta|\leq 21}\norm{\partial^{\beta}\psi}_{\mathcal{L}^2(\mathbb{R}^2)} \Big) \leq C \norm{\psi}_{C^{21}(B^2)} .
\end{equation*}
\end{proof}

\noindent A direct application of this lemma shows that $\mathbb{E}_{T_t,B}^{N,\boldsymbol\epsilon} (\big|E \big|) \leq C \frac{(\log N)^3}{N}$, and we could prove similarly using higher order loop equations that for all integer $k\geq 1$ 

\begin{equation}\label{controlepE}
\Big( \mathbb{E}_{T_t,B}^{N,\boldsymbol\epsilon} (\big|E \big|^ {2k}) \Big)^{1/2k}\leq C_k \frac{(\log N)^3}{N} .
\end{equation}

\noindent In order to prove Propostion \ref{erreur} it remains to control the deterministic term $C_t^{N,\boldsymbol\epsilon}$. Let 
\begin{equation*}
\begin{split}
\boldsymbol{\mathcal{L}}(\mathbf{Y})= &  \beta \sum_{h=0}^g  \sum_{1\leq i < j \leq N_h}  \frac{\mathbf{Y}_{h,i} - \mathbf{Y}_{h,j}}{\lambda_{h,i} - \lambda_{h,j}} + \beta \sum_{0\leq h < h' \leq g} \sum_{\substack{1\leq i \leq N_h \\ 1\leq j \leq N_{h'}}} \frac{\mathbf{Y}_{h,i} - \mathbf{Y}_{h',j}}{\lambda_{h,i} - \lambda_{h',j}}  \\
& + {\sum_{0\leq h, h' \leq g} \sum_{\substack{1\leq i \leq N_h \\ 1\leq j \leq N_{h'}}} } (\partial_1 T(\lambda_{h,i}, \lambda_{h',j}) \mathbf{Y}_{h,i,t} )  + \divergence { (\mathbf{Y})} .
\end{split}
\end{equation*}

\noindent Integration by part shows that  any vector field $\mathbf{Y}$ that vanishes on the boundary of $\textbf{B}$ satisfies \   $\mathbb{E}_{T_t,B}^{N,\boldsymbol\epsilon} (\boldsymbol{\mathcal{L}}(\mathbf{Y}))=0$.
\noindent Thus
\begin{equation*}
\begin{split}
\mathbb{E}_{T_t,B}^{N,\boldsymbol\epsilon} \big(\mathcal{R}_t^{N,\boldsymbol\epsilon}(\mathbf{Y}^{N,\boldsymbol\epsilon}) \big) & = \mathbb{E}_{T_t,B}^{N,\boldsymbol\epsilon} \Big( \ - \frac{1}{2} {\sum_{0\leq h, h' \leq g} \sum_{\substack{1\leq i \leq N_h \\ 1\leq j \leq N_{h'}}} } W(\lambda_{h,i}, \lambda_{h',j}) + N  \sum_{0\leq h \leq g} \sum_{{1\leq i \leq N_h }} \int W(\lambda_{h,i},z)d\mu_{V}^{\boldsymbol\epsilon}(z)    \\
& +   \boldsymbol{\mathcal{L}} (\mathbf{Y}_t^{N,\boldsymbol\epsilon}) - c_t^{N,\boldsymbol\epsilon} \Big) = 0 \ , 
\end{split}
\end{equation*}

\noindent and by (\ref{R})
\begin{equation*}
| C_t^{N,\boldsymbol\epsilon}|  = \Big|\mathbb{E}_{T_t,B}^{N,\boldsymbol\epsilon} \big(\mathcal{R}_t^{N,\boldsymbol\epsilon}(\mathbf{Y}^{N,\boldsymbol\epsilon}) - E \big)  \Big| +o\Big(\frac{1}{N}\Big)\leq C \frac{(\log N)^3}{N} \ .
\end{equation*}

\subsection{Obtaining the Transport map via the flow}
\noindent In this section we will discuss the properties of the transport map given by the flow of the approximate solution $\mathbf{Y}^{N,\boldsymbol\epsilon}$ of the Monge-Ampère equation. As the equilibrium measures of the initial potential and the target potential are the same, this map is equal to the identity at the first order. The smaller order are then given by the expansion (\ref{expansion}) of $\mathbf{Y}^{N,\boldsymbol\epsilon}$ .

\begin{lem}
Let $V$ satisfy Hypothesis \ref{hypo} ,  $T_t$ is as in (\ref{potentiel}) and $\boldsymbol\epsilon = \mathbf{N}/N \in \tilde{\mathcal{E}}$. Then the flow  ${X}_t^{N,\boldsymbol\epsilon}$ can be written

\begin{equation}\label{decomposition}
{X}_t^{N,\boldsymbol\epsilon} = Id + \frac{1}{N}{X}_t^{N,\boldsymbol\epsilon,1} + \frac{1}{N^2}{X}_t^{N,\boldsymbol\epsilon,2}
\end{equation}

\noindent where ${X}_t^{N,\boldsymbol\epsilon,1}$ and ${X}_t^{N,\boldsymbol\epsilon,2}$ are in $C^{\infty}(\mathbb{R}^N)$  supported in $\mathbf{B}$, and for some constant $C>0$

\begin{equation}\label{controle1}
\sup_{\substack{0\leq h \leq g \\ 1\leq i \leq N_{h}}} \norm{{X}_{h,i,t}^{N,\boldsymbol\epsilon,1}}_{L^4( \mathbb{P}_{V,B}^{N,\boldsymbol\epsilon})} \leq C \log N \ \ \ , \ \ \ \norm{{X}_{t}^{N,\boldsymbol\epsilon,2}}_{L^2( \mathbb{P}_{V,B}^{N,\boldsymbol\epsilon})} \leq C \sqrt{N}(\log N)^2 
\end{equation}

\noindent and with probability greater than $1 - N^{-\frac{N}{C}}$

\begin{equation}\label{controleespace}
\sup_{\substack{0\leq h \leq g \\ 1\leq i,j \leq N_{h}}} \big| {X}_{h,i,t}^{N,\boldsymbol\epsilon,1}(\boldsymbol{\lambda}) - {X}_{h,j,t}^{N,\boldsymbol\epsilon,1}(\boldsymbol{\lambda}) \big| \leq C \sqrt{N}\log N  |\lambda_{h,i} - \lambda_{h,j} |
\end{equation}

\begin{equation}\label{controleespace2}
\sup_{\substack{0\leq h \leq g \\ 1\leq i,j \leq N_{h}}} \big| {X}_{h,i,t}^{N,\boldsymbol\epsilon,2}(\boldsymbol{\lambda}) - {X}_{h,j,t}^{N,\boldsymbol\epsilon,2}(\boldsymbol{\lambda}) \big| \leq C N \sqrt{N}\log N  |\lambda_{h,i} - \lambda_{h,j} |
\end{equation}

\begin{equation}\label{controlenorme}
\sup_{\substack{0\leq h \leq g \\ 1\leq i \leq N_{h}}} \norm{ {X}_{h,i,t}^{N,\boldsymbol\epsilon,1} }_{\infty} \leq C \sqrt{N}\log N  \ \ \  , \ \ \  \sup_{\substack{0\leq h \leq g \\ 1\leq i \leq N_{h}}} \norm{ {X}_{h,i,t}^{N,\boldsymbol\epsilon,2} }_{\infty} \leq C N \sqrt{N}\log N  
\end{equation}

\end{lem}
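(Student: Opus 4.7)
The plan is to observe that $\mathbf{Y}^{N,\boldsymbol\epsilon}$ is of order $1/N$ at the relevant length scale, so the flow stays at distance $O(\log N/N)$ from the identity with very high probability, and to perform a second-order expansion. I define the first-order term by freezing the vector field at the initial configuration:
\[
X_{h,i,t}^{N,\boldsymbol\epsilon,1}(\boldsymbol\lambda) := \int_0^t \Bigl[\mathbf{y}_{1,s}^{\boldsymbol\epsilon}(\lambda_{h,i}) + \xi_s^{\boldsymbol\epsilon}(\lambda_{h,i}, M_N)\Bigr]\,ds,
\]
which is $C^\infty$ in $\boldsymbol\lambda$ since $\mathbf{y}_{1,s}^{\boldsymbol\epsilon}$ and $\mathbf{z}_s^{\boldsymbol\epsilon}$ are, and which vanishes as soon as $\lambda_{h,i}\notin B_h$ thanks to the cutoff $\Upsilon$ in the construction of $\mathbf{y}_{1,s}^{\boldsymbol\epsilon}$ and $\mathbf{z}_s^{\boldsymbol\epsilon}$. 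Setting $X_t^{N,\boldsymbol\epsilon,2} := N^2(X_t^{N,\boldsymbol\epsilon}-\mathrm{Id}-\tfrac{1}{N}X_t^{N,\boldsymbol\epsilon,1})$ then realizes (\ref{decomposition}) tautologically; the smoothness and support of $X_t^{N,\boldsymbol\epsilon,2}$ are inherited from the flow, which preserves $\mathbf{B}$ because $\mathbf{Y}^{N,\boldsymbol\epsilon}$ vanishes on $\partial\mathbf{B}$.

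The $L^4$ bound on $X_t^{N,\boldsymbol\epsilon,1}$ is a direct application of Lemma \ref{mnexp}: writing $\xi_s^{\boldsymbol\epsilon}(\lambda_{h,i}, M_N) = M_N(\mathbf{z}_s^{\boldsymbol\epsilon}(\lambda_{h,i},\cdot))$ and using the fourth-moment estimate with $k = \mathbf{z}_s^{\boldsymbol\epsilon}(\lambda_{h,i},\cdot)$ (whose $C^6$ norm is bounded uniformly in $\lambda_{h,i}\in B$) gives $\|\xi_s^{\boldsymbol\epsilon}(\lambda_{h,i}, M_N)\|_{L^4}\leq C\log N$; time integration together with the bounded $\mathbf{y}_{1,s}^{\boldsymbol\epsilon}$ piece yields the claim. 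For (\ref{controleespace}) and the $X^1$ part of (\ref{controlenorme}), I apply Proposition \ref{concentration} with $s\sim\sqrt{(\log N)/N}$ calibrated so that
\[
\Omega := \Bigl\{\sup_{\|\phi'\|_\infty\leq 1}|M_N(\phi)|\leq C\sqrt{N\log N}\Bigr\}
\]
has $\mathbb{P}(\Omega^c)\leq N^{-N/C}$. On $\Omega$ one has $|\xi_s^{\boldsymbol\epsilon}(\lambda,M_N)|\leq C\sqrt{N}\log N$ pointwise in $\lambda\in B$, yielding the $X^1$ part of (\ref{controlenorme}); writing
\[
X_{h,i,t}^{N,\boldsymbol\epsilon,1} - X_{h,j,t}^{N,\boldsymbol\epsilon,1} = (\lambda_{h,i}-\lambda_{h,j})\int_0^t\!\!\int_0^1\Bigl[(\mathbf{y}_{1,s}^{\boldsymbol\epsilon})'(\lambda_\tau)+M_N(\partial_1\mathbf{z}_s^{\boldsymbol\epsilon}(\lambda_\tau,\cdot))\Bigr]\,d\tau\,ds
\]
with $\lambda_\tau=\lambda_{h,j}+\tau(\lambda_{h,i}-\lambda_{h,j})$, and using that $\partial_1\mathbf{z}_s^{\boldsymbol\epsilon}(\lambda_\tau,\cdot)$ is uniformly Lipschitz, gives (\ref{controleespace}).

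For $X_t^{N,\boldsymbol\epsilon,2}$, I Taylor-expand $\mathbf{Y}_t^{N,\boldsymbol\epsilon}(X_t)$ to first order in $X_t-\mathrm{Id}$ and subtract $\tfrac{1}{N}\dot X_t^{N,\boldsymbol\epsilon,1}$. The result is a linear ODE for $X_t^{N,\boldsymbol\epsilon,2}$ with forcing collecting (i) second-order Taylor remainders quadratic in $X_t^{N,\boldsymbol\epsilon,1}$, (ii) products $X_{h,i,t}^{N,\boldsymbol\epsilon,1}\cdot M_N(\partial_1\mathbf{z}_t^{\boldsymbol\epsilon}(\lambda_{h,i},\cdot))$, and (iii) off-diagonal sums $\sum_{h',j'}\partial_2\mathbf{z}_t^{\boldsymbol\epsilon}(\lambda_{h,i},\lambda_{h',j'})X_{h',j',t}^{N,\boldsymbol\epsilon,1}$. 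Using the $L^4$ bound on $X^1$, Lemma \ref{mnexp} applied to the various Jacobian coefficients, Cauchy--Schwarz, and the equilibrium identity (\ref{equilibre}) to extract cancellations in (iii), each forcing term has $L^2(\mathbb{P}_{V,B}^{N,\boldsymbol\epsilon})$-norm per coordinate bounded by $(\log N)^2$. A Gronwall argument in the Euclidean $L^2$-norm then yields $\|X_t^{N,\boldsymbol\epsilon,2}\|_{L^2}\leq C\sqrt{N}(\log N)^2$, and (\ref{controleespace2}) together with the $X^2$ part of (\ref{controlenorme}) follow by redoing the same analysis pointwise on $\Omega$.

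The main obstacle is estimate (iii): a naive bound gives $O(N\log N)$, because one sums $N$ terms of typical size $\log N$. The remedy is to decompose $L_N = \mu_V^{\boldsymbol\epsilon}+\frac{1}{N}M_N$ inside the sum and recognize that the resulting mean-field contribution matches analogous terms produced by the Monge--Amp\`ere equations satisfied by $\mathbf{y}_{1,s}^{\boldsymbol\epsilon}$ and $\mathbf{z}_s^{\boldsymbol\epsilon}$ through $\Xi$, so that the Taylor expansion of the flow equation reorganizes into pieces of order $(\log N)^2$ per coordinate that are controlled by the higher-order moment estimates of Lemma \ref{mnexp}.
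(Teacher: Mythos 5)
Your construction diverges from the paper's at the very first step, and this is where the argument breaks. The paper does \emph{not} define ${X}_t^{N,\boldsymbol\epsilon,1}$ by freezing the vector field at the initial configuration; it defines it as the solution of the \emph{coupled linear ODE}
\begin{equation*}
\dot{X}_{h,i,t}^{N,\boldsymbol\epsilon,1} = \mathbf{y}^{\boldsymbol\epsilon}_{1,t}(\lambda_{h,i}) + \int \mathbf{z}^{\boldsymbol\epsilon}_{t}(\lambda_{h,i},y)\,dM_N(y) + \frac{1}{N}\sum_{h',j}\partial_2\mathbf{z}^{\boldsymbol\epsilon}_{t}(\lambda_{h,i},\lambda_{h',j})\,X_{h',j,t}^{N,\boldsymbol\epsilon,1},
\end{equation*}
bounded via Gr\"onwall (the coupling operator has norm $O(1)$ since it carries the prefactor $1/N$), and only then defines ${X}_t^{N,\boldsymbol\epsilon,2}$ by (\ref{decomposition}). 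With your frozen-field definition, the term you label (iii), namely $\sum_{h',j}\partial_2\mathbf{z}_t^{\boldsymbol\epsilon}(\lambda_{h,i},\lambda_{h',j})X^{N,\boldsymbol\epsilon,1}_{h',j,t}$, is forced into $\dot X^{N,\boldsymbol\epsilon,2}_{h,i,t}$, and it is \emph{not} of size $(\log N)^2$ per coordinate: writing $\sum_{h',j} = N\int\cdot\,dL_N$ and $L_N=\mu_V^{\boldsymbol\epsilon}+\frac1N M_N$, its mean-field part is
\begin{equation*}
N\int \partial_2\mathbf{z}_t^{\boldsymbol\epsilon}(\lambda_{h,i},y)\Bigl(\int_0^t\mathbf{y}_{1,s}^{\boldsymbol\epsilon}(y)\,ds\Bigr)d\mu_V^{\boldsymbol\epsilon}(y) \;+\; N\int_0^t M_N\Bigl(\textstyle\int\partial_2\mathbf{z}_t^{\boldsymbol\epsilon}(\lambda_{h,i},y)\mathbf{z}_s^{\boldsymbol\epsilon}(y,\cdot)\,d\mu_V^{\boldsymbol\epsilon}(y)\Bigr)ds ,
\end{equation*}
which is deterministically of order $N$ (plus an $O(N\log N)$ fluctuation by Lemma \ref{mnexp}), not $O((\log N)^2)$. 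Consequently your $X^{N,\boldsymbol\epsilon,2}$ would have size of order $N$ per coordinate, so $\norm{X_t^{N,\boldsymbol\epsilon,2}}_{L^2(\mathbb{P}_{V,B}^{N,\boldsymbol\epsilon})}$ would be of order $N^{3/2}$, off by a factor of roughly $\sqrt N$ from (\ref{controle1}); this is precisely the estimate that feeds into the proof of Proposition \ref{theoremfixed}, so the loss is not harmless.

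You do flag this as "the main obstacle," but the proposed remedy does not exist: the relations $\Xi\,\mathbf{z}^{\boldsymbol\epsilon}_t(\cdot,y)=\tfrac12 W(\cdot,y)+\kappa_1$ and $\Xi\,\mathbf{y}^{\boldsymbol\epsilon}_{1,t}=(\tfrac\beta2-1)\int\partial_1\mathbf{z}^{\boldsymbol\epsilon}_t(z,\cdot)d\mu_V^{\boldsymbol\epsilon}(z)+\kappa_2$ constrain $\Xi$ applied to $\mathbf z$ and $\mathbf y_1$, not the quantities $\int\partial_2\mathbf{z}_t^{\boldsymbol\epsilon}(x,y)\mathbf{y}_{1,s}^{\boldsymbol\epsilon}(y)\,d\mu_V^{\boldsymbol\epsilon}(y)$ or $\int\partial_2\mathbf{z}_t^{\boldsymbol\epsilon}(x,y)\mathbf{z}_s^{\boldsymbol\epsilon}(y,w)\,d\mu_V^{\boldsymbol\epsilon}(y)$ that appear here (note also the mismatch of the time parameters $t$ and $s$), so no cancellation of the order-$N$ mean-field drift follows from them. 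The rest of your argument is essentially sound and close to the paper's (the $L^4$ bound on $X^1$ via moment estimates, the high-probability event from Proposition \ref{concentration} for (\ref{controleespace}) and (\ref{controlenorme}), Taylor expansion plus Gr\"onwall in the Euclidean $L^2$ norm for $X^2$), but to make it work you must put the coupling term inside the definition of $X^1$, as the paper does, so that only genuinely second-order remainders — quadratic in $X^1$, or products of $X^1$ with $M_N(\partial_1\mathbf{z}_t^{\boldsymbol\epsilon}(\lambda_{h,i},\cdot))$, or terms carrying an extra $1/N$ — are left for $X^2$.
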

\begin{proof}
\noindent The expansion (\ref{expansion}) suggests  to define $ {X}_{t}^{N,\boldsymbol\epsilon,1} = ( {X}_{0,1,t}^{N,\boldsymbol\epsilon,1}, \cdots , {X}_{g,N_g,t}^{N,\boldsymbol\epsilon,1}) $ as  the solution of the linear ODE
\begin{equation}\label{equation1}
\begin{split}
 \dot{X}_{h,i,t}^{N,\boldsymbol\epsilon,1}(\boldsymbol{\lambda}) = \mathbf{y}^{\boldsymbol\epsilon}_{1,t} (\lambda_{h,i}) + \int \mathbf{z}^{\boldsymbol\epsilon}_{t}(\lambda_{h,i},y) dM_N(y) + \frac{1}{N} \sum_{0 \leq h' \leq g} \sum_{1 \leq j \leq N_{h'}} \partial_2 \mathbf{z}^{\boldsymbol\epsilon}_{t}(\lambda_{h,i}, \lambda_{h',j}) {X}_{h',j,t}^{N,\boldsymbol\epsilon,1}(\boldsymbol{\lambda}) \
\end{split}
\end{equation}

\noindent with initial condition ${X}_{t}^{N,\boldsymbol\epsilon,1}=0$. We then define ${X}_{t}^{N,\boldsymbol\epsilon,2}$ through the identity (\ref{decomposition}).

\noindent Using the fact that $\mathbf{y}^{\boldsymbol\epsilon}_{1,t}$ and $\mathbf{z}^{\boldsymbol\epsilon}_{t}$ have compact support and are thus bounded, along with equation (\ref{equation1}), we obtain:
\begin{equation*}
\frac{d}{dt}\Big( \sup_{\substack{0\leq h \leq g \\ 1\leq i \leq N_{h}}} \norm{{X}_{h,i,t}^{N,\boldsymbol\epsilon,1}}_{L^4( \mathbb{P}_{V,B}^{N,\boldsymbol\epsilon})}\Big) \leq C \Big( 1 + \sup_{\substack{0\leq h \leq g \\ 1\leq i \leq N_{h}}} \norm{{X}_{h,i,t}^{N,\boldsymbol\epsilon,1}}_{L^4( \mathbb{P}_{V,B}^{N,\boldsymbol\epsilon})} + \sup_{\substack{0\leq h \leq g \\ 1\leq i \leq N_{h}}} \norm{\int \mathbf{z}^{\boldsymbol\epsilon}_{t}(\lambda_{h,i},y)dM_N(y)}_{L^4( \mathbb{P}_{V,B}^{N,\boldsymbol\epsilon})} \Big).
\end{equation*}
\noindent As in Lemma \ref{controlefourier}, we can prove that the last term is of order $\log N$. Using Grönwall's Lemma, this proves 

\begin{equation}\label{controle11}
\sup_{\substack{0\leq h \leq g \\ 1\leq i \leq N_{h}}} \norm{{X}_{h,i,t}^{N,\boldsymbol\epsilon,1}}_{L^4( \mathbb{P}_{V,B}^{N,\boldsymbol\epsilon})} \leq C \log N .
\end{equation}

\noindent Furthermore, Proposition \ref{concentration} shows that for some constant $C$, with probability greater than $1 - N^{-\frac{N}{C}}$ we have 
\begin{equation*}
\norm {\int \partial_1 \mathbf{z}^{\boldsymbol\epsilon}_{t}(  \cdot , y) dM_N(y) }_{\infty} \leq C \sqrt{N} \log N
\end{equation*}

\noindent and similarly, this proves (\ref{controleespace}). We now have to   bound the norm    of ${X}_{t}^{N,\boldsymbol\epsilon,2}$. For $s\in [0;1]$ let 
\begin{equation*}
{X}_{t}^{s,N,\boldsymbol\epsilon} = Id + \frac{s}{N}{X}_t^{N,\boldsymbol\epsilon,1} + \frac{s}{N^2}{X}_t^{N,\boldsymbol\epsilon,2} = (1-s) Id + s {X}_{t}^{N,\boldsymbol\epsilon}
\end{equation*}

\noindent and define the measure $M_N^{{X}_{t}^{s,N,\boldsymbol\epsilon}}$ by
\begin{equation*}
\int f(y) dM_N^{{X}_{t}^{s,N,\boldsymbol\epsilon}}(y) = \sum_{0 \leq h \leq g} \sum_{1 \leq i \leq N_h} f({X}_{h,i,t}^{s,N,\boldsymbol\epsilon}(\boldsymbol{\lambda})) - N\int f d\mu_{V}^{\boldsymbol\epsilon} .
\end{equation*}

\noindent Then a Taylor expansion gives us an ODE for ${X}_{t}^{N,\boldsymbol\epsilon,2}$

\begin{equation}
\begin{split}
\dot{X}_{h,i,t}^{N,\boldsymbol\epsilon,2} (\boldsymbol{\lambda}) & = \int_0^1 (\mathbf{y}^{\boldsymbol\epsilon }_{1,t})' \big( {X}_{h,i,t}^{s,N,\boldsymbol\epsilon} (\boldsymbol{\lambda}) \big) ds \ \ \Big( {X}_{h,i,t}^{N,\boldsymbol\epsilon,1}(\boldsymbol{\lambda}) + \frac{1}{N}{X}_{h,i,t}^{N,\boldsymbol\epsilon,2}(\boldsymbol{\lambda}) \Big) \\
&+ \int_0^1 \Big[ \int \partial_1\mathbf{z}^{\boldsymbol\epsilon }_{t} \big({X}_{h,i,t}^{s,N,\boldsymbol\epsilon}(\boldsymbol{\lambda}) ,y \big) dM_N^{{X}_{t}^{s,N,\boldsymbol\epsilon}} - \int \partial_1\mathbf{z}^{\boldsymbol\epsilon }_{t} \big(\lambda_{h,i},y \big) dM_N(y) \Big] ds \ \  \Big( {X}_{h,i,t}^{N,\boldsymbol\epsilon,1}(\boldsymbol{\lambda}) + \frac{1}{N}{X}_{h,i,t}^{N,\boldsymbol\epsilon,2}(\boldsymbol{\lambda}) \Big)\\
&+  \int \partial_1\mathbf{z}^{\boldsymbol\epsilon }_{t} \big(\lambda_{h,i} ,y \big) dM_N(y) \ \ \Big( {X}_{h,i,t}^{N,\boldsymbol\epsilon,1}(\boldsymbol{\lambda}) + \frac{1}{N}{X}_{h,i,t}^{N,\boldsymbol\epsilon,2}(\boldsymbol{\lambda}) \Big)\\
& + \sum_{0 \leq h' \leq g} \sum_{1 \leq j \leq N_{h'}} \int_0^1 \Big[ \partial_2\mathbf{z}^{\boldsymbol\epsilon }_{t} \big({X}_{h,i,t}^{s,N,\boldsymbol\epsilon}(\boldsymbol{\lambda}) ,{X}_{h',j,t}^{s,N,\boldsymbol\epsilon}(\boldsymbol{\lambda}) \big)- \partial_2\mathbf{z}^{\boldsymbol\epsilon }_{t} \big( \lambda_{h,i},\lambda_{h',j}\big) \Big]ds \ {X}_{h',j,t}^{N,\boldsymbol\epsilon,1}(\boldsymbol{\lambda}) \\
&+ \sum_{0 \leq h' \leq g} \sum_{1 \leq j \leq N_{h'}} \int_0^1 \Big[ \partial_2\mathbf{z}^{\boldsymbol\epsilon }_{t} \big({X}_{h,i,t}^{s,N,\boldsymbol\epsilon}(\boldsymbol{\lambda}) ,{X}_{h',j,t}^{s,N,\boldsymbol\epsilon}(\boldsymbol{\lambda}) \big) \Big]ds \  \ \frac{{X}_{h',j,t}^{N,\boldsymbol\epsilon,2}(\boldsymbol{\lambda})}{N} .
\end{split}
\end{equation}

\noindent We then use the bounds
\begin{equation*}
\begin{split}
\int_0^1 \Big| \int \partial_1\mathbf{z}^{\boldsymbol\epsilon }_{t} \big({X}_{h,i,t}^{s,N,\boldsymbol\epsilon}(\boldsymbol{\lambda}) ,y \big) dM_N^{{X}_{t}^{s,N,\boldsymbol\epsilon}} - &\int \partial_1\mathbf{z}^{\boldsymbol\epsilon }_{t} \big(\lambda_{h,i},y \big) dM_N(y) \Big|ds \\
& \leq C  | {X}_{h,i,t}^{N,\boldsymbol\epsilon,1}| +\frac{C}{N} |{X}_{h,i,t}^{N,\boldsymbol\epsilon,2}| +\frac{C}{N}\sum_{h',j}\Big(   | {X}_{h',j,t}^{N,\boldsymbol\epsilon,1}| +\frac{1}{N} |{X}_{h',j,t}^{N,\boldsymbol\epsilon,2}| \Big)  , \\
\sum_{h',j} \int_0^1 \Big| \partial_2\mathbf{z}^{\boldsymbol\epsilon }_{t} \big({X}_{h,i,t}^{s,N,\boldsymbol\epsilon}(\boldsymbol{\lambda}) ,{X}_{h',j,t}^{s,N,\boldsymbol\epsilon}(\boldsymbol{\lambda}) \big)- & \partial_2\mathbf{z}^{\boldsymbol\epsilon }_{t} \big( \lambda_{h,i},\lambda_{h',j}\big) \Big|ds \ |{X}_{h',j,t}^{N,\boldsymbol\epsilon,1}(\boldsymbol{\lambda})| \\
& \leq \frac{C}{N} \sum_{h',j}  \Big(| {X}_{h',j,t}^{N,\boldsymbol\epsilon,1}|^2 +\frac{1}{N} |{X}_{h',j,t}^{N,\boldsymbol\epsilon,2}| | {X}_{h',j,t}^{N,\boldsymbol\epsilon,1}| \Big)
\end{split}
\end{equation*}

\noindent to obtain
\begin{equation}\label{mstreq}
\begin{split}
\frac{d}{dt} \norm{{X}_{t}^{N,\boldsymbol\epsilon,2}}_{L^2( \mathbb{P}_{V,B}^{N,\boldsymbol\epsilon})}^2 &= 2 \ \mathbb{E}_{V,B}^{N,\boldsymbol\epsilon} \Bigg( \sum_{h,i} \dot{X}_{h,i,t}^{N,\boldsymbol\epsilon,2} \ {X}_{h,i,t}^{N,\boldsymbol\epsilon,2} \Bigg) \\
& \leq C \ \mathbb{E}_{V,B}^{N,\boldsymbol\epsilon} \Bigg( \sum_{h,i} |{X}_{h,i,t}^{N,\boldsymbol\epsilon,1}| \ |{X}_{h,i,t}^{N,\boldsymbol\epsilon,2}| \Bigg) +  \frac{C}{N} \  \mathbb{E}_{V,B}^{N,\boldsymbol\epsilon} \Bigg( \sum_{h,i} |{X}_{h,i,t}^{N,\boldsymbol\epsilon,2}|^2 \Bigg) \\
& +  C \ \mathbb{E}_{V,B}^{N,\boldsymbol\epsilon} \Bigg( \sum_{h,i}|{X}_{h,i,t}^{N,\boldsymbol\epsilon,1}|^2 \ |{X}_{h,i,t}^{N,\boldsymbol\epsilon,2}|  \Bigg) + \frac{C}{N} \ \mathbb{E}_{V,B}^{N,\boldsymbol\epsilon} \Bigg( \sum_{h,i}|{X}_{h,i,t}^{N,\boldsymbol\epsilon,1}| \ |{X}_{h,i,t}^{N,\boldsymbol\epsilon,2}|^2  \Bigg) \\
&+ \frac{C}{N^2} \ \mathbb{E}_{V,B}^{N,\boldsymbol\epsilon} \Bigg( \sum_{h,i}|{X}_{h,i,t}^{N,\boldsymbol\epsilon,2}|^3  \Bigg) \\
&+ \frac{C}{N} \ \mathbb{E}_{V,B}^{N,\boldsymbol\epsilon} \Bigg( \sum_{\substack{h,i \\ h',j }}|{X}_{h,i,t}^{N,\boldsymbol\epsilon,1}| \ |{X}_{h',j,t}^{N,\boldsymbol\epsilon,1}| \ |{X}_{h,i,t}^{N,\boldsymbol\epsilon,2}|  \Bigg) + \frac{C}{N^2} \ \mathbb{E}_{V,B}^{N,\boldsymbol\epsilon} \Bigg( \sum_{\substack{h,i \\ h',j }}|{X}_{h,i,t}^{N,\boldsymbol\epsilon,1}| \ |{X}_{h',j,t}^{N,\boldsymbol\epsilon,2}| \ |{X}_{h,i,t}^{N,\boldsymbol\epsilon,2}|  \Bigg)\\
&+ \frac{C}{N^2} \ \mathbb{E}_{V,B}^{N,\boldsymbol\epsilon} \Bigg( \sum_{\substack{h,i \\ h',j }}|{X}_{h,i,t}^{N,\boldsymbol\epsilon,2}|^2 \ |{X}_{h',j,t}^{N,\boldsymbol\epsilon,1}|  \Bigg) + \frac{C}{N^3} \ \mathbb{E}_{V,B}^{N,\boldsymbol\epsilon} \Bigg( \sum_{\substack{h,i \\ h',j }}|{X}_{h,i,t}^{N,\boldsymbol\epsilon,2}|^2 \ |{X}_{h',j,t}^{N,\boldsymbol\epsilon,2}|  \Bigg)\\
&+ \mathbb{E}_{V,B}^{N,\boldsymbol\epsilon} \Bigg( \sum_{h,i} \Big| \int \partial_1\mathbf{z}^{\boldsymbol\epsilon }_{t} \big(\lambda_{h,i} ,y \big) dM_N(y)  \Big| |{X}_{h,i,t}^{N,\boldsymbol\epsilon,1}| \ |{X}_{h,i,t}^{N,\boldsymbol\epsilon,2}| \Bigg) + C \  \mathbb{E}_{V,B}^{N,\boldsymbol\epsilon} \Bigg( \sum_{h,i} |{X}_{h,i,t}^{N,\boldsymbol\epsilon,2}|^2 \Bigg) \\
&+ \frac{C}{N} \ \mathbb{E}_{V,B}^{N,\boldsymbol\epsilon} \Bigg( \sum_{\substack{h,i \\ h',j }}|{X}_{h,i,t}^{N,\boldsymbol\epsilon,2}| \ |{X}_{h',j,t}^{N,\boldsymbol\epsilon,1}|^2  \Bigg) + \frac{C}{N^2} \ \mathbb{E}_{V,B}^{N,\boldsymbol\epsilon} \Bigg( \sum_{\substack{h,i \\ h',j }}|{X}_{h,i,t}^{N,\boldsymbol\epsilon,2}| \ |{X}_{h',j,t}^{N,\boldsymbol\epsilon,2}|\ |{X}_{h',j,t}^{N,\boldsymbol\epsilon,1}|  \Bigg)\\
&+ \frac{C}{N} \ \mathbb{E}_{V,B}^{N,\boldsymbol\epsilon} \Bigg( \sum_{\substack{h,i \\ h',j }}|{X}_{h,i,t}^{N,\boldsymbol\epsilon,2}| \ |{X}_{h',j,t}^{N,\boldsymbol\epsilon,2}|  \Bigg) .
\end{split}
\end{equation}

\noindent Using the bounds   $\norm{ \int \partial_1\mathbf{z}^{\boldsymbol\epsilon }_{t} \big(\lambda_{h,i} ,y \big) dM_N (y) }_{L^4( \mathbb{P}_{V,B}^{N,\boldsymbol\epsilon})} \leq C \log N$ (see Lemma \ref{controlefourier}),  $|{X}_{h,i,t}^{N,\boldsymbol\epsilon,1}| \leq C \ N$ , $|{X}_{h,i,t}^{N,\boldsymbol\epsilon,2}| \leq  C N^2$  and inequalities such as

\begin{equation*}
\begin{split}
\sum_{h,i} |{X}_{h,i,t}^{N,\boldsymbol\epsilon,1}| \ |{X}_{h,i,t}^{N,\boldsymbol\epsilon,2}| & \leq \frac{1}{2} \Big( \sum_{h,i} \big( ({X}_{h,i,t}^{N,\boldsymbol\epsilon,1})^2 +  ({X}_{h,i,t}^{N,\boldsymbol\epsilon,2})^2 \big) \Big) \\
\sum_{\substack{h,i \\ h',j }}|{X}_{h,i,t}^{N,\boldsymbol\epsilon,1}| \ |{X}_{h',j,t}^{N,\boldsymbol\epsilon,1}| \ |{X}_{h,i,t}^{N,\boldsymbol\epsilon,2}| & \leq \Big( \sum_{\substack{h,i \\ h',j }}\big(({X}_{h,i,t}^{N,\boldsymbol\epsilon,1})^4 + ({X}_{h,i,t}^{N,\boldsymbol\epsilon,1})^4 + ({X}_{h',j,t}^{N,\boldsymbol\epsilon,2})^2 \big) \Big)
\end{split}
\end{equation*}

\noindent along with (\ref{controle11}) and Hölder inequality, we get
\begin{equation}
\frac{d}{dt} \norm{{X}_{t}^{N,\boldsymbol\epsilon,2}}_{L^2( \mathbb{P}_{V,B}^{N,\boldsymbol\epsilon})}^2 \leq C \Big( \norm{{X}_{t}^{N,\boldsymbol\epsilon,2}}_{L^2( \mathbb{P}_{V,B}^{N,\boldsymbol\epsilon})}^2 + N (\log N)^4 \Big) .
\end{equation}

\noindent Using Grönwall's Lemma, we can conclude the proof. The bounds (\ref{controleespace2}) and (\ref{controlenorme}) are proven the same way.
\end{proof}

\begin{rem}\label{normelp}
Using  (\ref{controlenorme}), (\ref{equation1}) and (\ref{mstreq}) we see that we have in fact for all integer $k \geq 1$

\begin{equation*}
\sup_{\substack{0\leq h \leq g \\ 1\leq i \leq N_{h}}} \norm{{X}_{h,i,t}^{N,\boldsymbol\epsilon,1}}_{L^{2k}( \mathbb{P}_{V,B}^{N,\boldsymbol\epsilon})} \leq C_k \log N \ \ \ , \ \ \ \sup_{\substack{0\leq h \leq g \\ 1\leq i \leq N_{h}}}\norm{{X}_{h,i,t}^{N,\boldsymbol\epsilon,2}}_{L^{2k}( \mathbb{P}_{V,B}^{N,\boldsymbol\epsilon})} \leq C_k \sqrt{N}(\log N)^2 
\end{equation*}
\end{rem}

\section{From Transport to Universality}

\noindent In this section we will prove Proposition \ref{theoremfixed} and Corollary \ref{corfixed}. We prove  the results in the bulk as the proof is almost  identical for the edge  result.

\begin{proof}[Proof of Proposition \ref{theoremfixed}]

\noindent  Note that by Lemma \ref{lemmatransp} and by our construction of $\mathbf{Y}_t^{N,\boldsymbol\epsilon}$, ${X}_{1}^{N,\boldsymbol\epsilon}$ is an approximate transport map from  $\mathbb{P}_{V,B}^{N,\boldsymbol\epsilon}$ to  $\mathbb{P}_{T_1,B}^{N,\boldsymbol\epsilon}$ in the sense that it satisfies (\ref{almosttransp}). Now, keeping our notations from the previous section, set $\hat{X}^{N,\boldsymbol\epsilon} = Id + \frac{1}{N}{X}_1^{N,\boldsymbol\epsilon,1}$. Then for all $f \in C^1(\mathbb{R})$

\begin{equation*}
\begin{split}
\Big| \int f(\hat{X}^{N,\boldsymbol\epsilon}) d\mathbb{P}_{V,B}^{N,\boldsymbol\epsilon} - \int f({X}_1^{N,\boldsymbol\epsilon}) d\mathbb{P}_{V,B}^{N,\boldsymbol\epsilon} \Big| & \leq \frac{\norm{\nabla{f}}_{\infty}}{N^2} \int |{X}_1^{N,\boldsymbol\epsilon,2}| d\mathbb{P}_{V,B}^{N,\boldsymbol\epsilon} \\
& \leq \frac{\norm{\nabla{f}}_{\infty}}{N^2} \norm{{X}_{1}^{N,\boldsymbol\epsilon,2}}_{L^2( \mathbb{P}_{V,B}^{N,\boldsymbol\epsilon})} \\
 & \leq \norm{\nabla{f}}_{\infty} \frac{(\log N)^2}{N^{\frac{3}{2}}}
\end{split}
\end{equation*}

\noindent and thus 

\begin{equation*}
\Big| \int f(\hat{X}^{N,\boldsymbol\epsilon}) d\mathbb{P}_{V,B}^{N,\boldsymbol\epsilon} - \int f d\mathbb{P}_{T_1,B}^{N,\boldsymbol\epsilon} \Big|  \leq C \frac{(\log N)^3}{N}\norm{f}_{\infty} +  \norm{\nabla{f}}_{\infty} \frac{(\log N)^2}{N^{\frac{3}{2}}} .
\end{equation*}

\noindent Now for all $ 0 \leq h \leq g$ let $R^h: B_h^{N_h} \longrightarrow B_h^{N_h}$ the ordering map (i.e the map satisfying for all $(\lambda_{1},\cdots ,\lambda_{N_h}) \in B_h^{N_h}$ $ R^{h,i}(\lambda_{1},\cdots ,\lambda_{N_h}) \leq  R^{h,j}(\lambda_{1},\cdots ,\lambda_{N_h})$ if  $i<j$ and $\{\lambda_{1}, \cdots , \lambda_{Nh} \} = \{R^{h,1}(\lambda_{1},\cdots ,\lambda_{N_h}), \cdots , R^{h,N_h}(\lambda_{1},\cdots ,\lambda_{N_h}) \} $, so that if $R(\boldsymbol{\lambda})=  (R^0(\lambda_{0,1},\cdots ,\lambda_{0,N_0}),\cdots ,R^g(\lambda_{g,1},\cdots ,\lambda_{g,N_g})) $ we have $R \sharp d\mathbb{P}_{B}^{N,\boldsymbol\epsilon} = d\tilde{\mathbb{P}}_{B}^{N,\boldsymbol\epsilon}$ .\\

\noindent  Then  if $f_h $ is a function of $m$ variables, we have $\norm{\nabla(f_h \circ R^h)}_{\infty} \leq \sqrt{m} \norm{\nabla f_h}_{\infty}$. \\

\noindent It is clear from (\ref{controleespace}) that $\hat{X}^{N,\boldsymbol\epsilon}$ preserves the order of the eigenvalues with probability greater than $1 - N^{-\frac{N}{C}}$. Thus, if we define $f: \mathbb{R}^{m (g+1)}\longrightarrow \mathbb{R}$ by $f(\boldsymbol{x}_{0},\cdots,\boldsymbol{x}_{g}) = \prod_{0 \leq h \leq g} f_h(\boldsymbol{x}_h)$ where $f_h: \mathbb{R}^{m }\longrightarrow \mathbb{R}$ we obtain

\begin{equation}
\begin{split}
\Bigg| &  \int \prod_{0 \leq h \leq g} f_h\big(N(\lambda_{h,i_h+1}- \lambda_{h,i_h}),  \cdots,N(\lambda_{h,i_h+m} - \lambda_{h,i_h})\big)d\tilde{\mathbb{P}}_{T_1,B}^{N,\boldsymbol\epsilon} \\
   - &  \int \prod_{0 \leq h \leq g} f_h\big(N (\hat{X}_{h,i_h+1}^{N,\boldsymbol\epsilon}(\boldsymbol{\lambda})-\hat{X}_{h,i_h}^{N,\boldsymbol\epsilon}(\boldsymbol{\lambda})),\cdots,N (\hat{X}_{h,i_h+m}^{N,\boldsymbol\epsilon}(\boldsymbol{\lambda}) - \hat{X}_{h,i_h}^{N,\boldsymbol\epsilon}(\boldsymbol{\lambda}))\big)d\tilde{\mathbb{P}}_{V,B}^{N,\boldsymbol\epsilon} \Bigg| \\
  &  \leq C \Big( \frac{(\log N)^3}{N}\norm{f}_{\infty} +  \norm{\nabla{f}}_{\infty} \sqrt{m} \  \frac{(\log N)^2}{N^{\frac{1}{2}}} \Big) .
\end{split}
\end{equation}

\noindent Now, using (\ref{controleespace}) we notice that with probability greater than $1 - N^{-\frac{N}{C}}$, for all $ 1 \leq k \leq m$ and $ 0 \leq h \leq g$ 

\begin{equation*}
\hat{X}_{h,i_h+k}^{N,\boldsymbol\epsilon}(\boldsymbol{\lambda})-\hat{X}_{h,i_h}^{N,\boldsymbol\epsilon}(\boldsymbol{\lambda}) = \lambda_{h,i+k} - \lambda_{h,i_h} + (\lambda_{h,i_h+k} - \lambda_{h,i_h}) O(\frac{\log N}{\sqrt{N}}) .
\end{equation*}
 
 \noindent As  $f_h$ has compact support in $[-M,M]^m$,  $(\lambda_{h,i_h+k} - \lambda_{h,i_h})$ remains bounded by $\frac{2 M}{N}$ and 
 
 \begin{equation*}
\hat{X}_{h,i+k}^{N,\boldsymbol\epsilon}(\boldsymbol{\lambda})-\hat{X}_{h,i_h}^{N,\boldsymbol\epsilon}(\boldsymbol{\lambda}) = \lambda_{h,i+k} - \lambda_{h,i_h} +  O(\frac{M  \log N}{N \sqrt{N}}) ,
\end{equation*}

\noindent we easily deduce the first part of Proposition \ref{theoremfixed} .\\

\end{proof}

\noindent Before proving Corollary \ref{corfixed} we recall Theorem 1.5 of \cite{BFG}.

\begin{prop}\label{thmonecut}
 Assume that $W$ is a potential satisfying Hypothesis \ref{hypo} with $g=0$. Then for a constant $C$ and for all $m \in \mathbb{N}^*$ and $f: \mathbb{R}^m \longrightarrow \mathbb{R}$ Lipschitz and compactly supported in $[-M ; M]$ we have  \\
\begin{enumerate}
\item In the Bulk 
\begin{equation*}
\begin{split}
\Bigg|  \int & f\big(N(\lambda_{i+1}- \lambda_{i}),  \cdots,N(\lambda_{i+m} - \lambda_{i})\big)d\tilde{\mathbb{P}}_{W}^{N} \\
   -  &   \int  f\big(N (\Phi)'(\lambda_{i})(\lambda_{i+1}-\lambda_{i}),\cdots,N (\Phi)'(\lambda_{i})(\lambda_{i+m} - \lambda_{i})\big)d\tilde{\mathbb{P}}_{G}^{N}  \Bigg| \\
  \leq & C \frac{(\log N)^3}{N}\norm{f}_\infty + C(\sqrt{m}\frac{(\log N)^2}{N^{1/2}} + M \frac{(\log N)}{N^{1/2}} +\frac{M^2}{N} ) \norm{\nabla f}_\infty
\end{split}
\end{equation*}
\item At the Edge
\begin{equation*}
\begin{split}
\Bigg|  \int &   f\big(N^{2/3}(\lambda_{1}-  \alpha_{-}),  \cdots,N^{2/3}(\lambda_{m} - \alpha_{-})\big)d\tilde{\mathbb{P}}_{W}^N \\
   -  &   \int  f\big(N^{2/3} (\Phi)'(-2)(\lambda_{1}+2),\cdots,N^{2/3} (\Phi)'(-2)(\lambda_{m} + 2)\big)d\tilde{\mathbb{P}}_{G}^{N}  \Bigg| \\
  \leq & C \frac{(\log N)^3}{N}\norm{f}_\infty + C(\sqrt{m}\frac{(\log N)^2}{N^{5/6}}  +\frac{\log N}{N^{1/3}} +  \frac{M^2}{N^{4/3}}) \norm{\nabla f}_\infty
\end{split}
\end{equation*}

 \end{enumerate}
 \noindent  where $\Phi$  is a transport map from $\mu_G$   to $\mu_{W}$, and we recall that $G$ denotes the Gaussian potential.
\end{prop}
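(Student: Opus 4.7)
The plan is to carry out, in the one-cut case $g=0$, exactly the construction developed in Sections 2.2--2.4 of the present paper, which is the original strategy of \cite{BFG}. I interpolate between the Gaussian potential $G$ and the target potential $W$ via $V_t = (1-t)G + tW$, and build an approximate transport map as the time-1 flow $X_1^N$ of a smooth vector field $\mathbf{Y}_t^N$ approximately solving the Monge--Amp\`ere equation associated with the interpolating family $\mathbb{P}_{V_t}^N$. I use the ansatz
\begin{equation*}
\mathbf{Y}_{i,t}^N(\boldsymbol{\lambda}) = \frac{1}{N}\mathbf{y}_{1,t}(\lambda_i) + \frac{1}{N}\int \mathbf{z}_t(\lambda_i,y)\, dM_N(y),
\end{equation*}
and invert the scalar analogue of the operator $\Xi$ of Lemma \ref{thm:inv} on the single interval supporting $\mu_W$ to solve for $\mathbf{y}_{1,t}$ and $\mathbf{z}_t$. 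Here no filling-fraction constants appear (only one additive constant per equation). The concentration estimate of Proposition \ref{concentration} combined with the loop equations yields, exactly as in Proposition \ref{erreur}, the remainder bound $\|\mathcal{R}_t^N(\mathbf{Y}^N)\|_{L^1(\mathbb{P}_{V_t}^N)} \leq C(\log N)^3/N$, and the stability lemma gives $|\int f(X_1^N)\,d\mathbb{P}_G^N - \int f\,d\mathbb{P}_W^N| \leq C\|f\|_\infty (\log N)^3/N$.

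Next I expand the flow as $X_1^N = Id + \frac{1}{N}X_1^{N,1} + \frac{1}{N^2}X_1^{N,2}$ and establish the one-cut versions of (\ref{controle1})--(\ref{controlenorme}): $L^{2k}$ bounds of order $\log N$ for $X^{N,1}$ and $\sqrt{N}(\log N)^2$ for $X^{N,2}$, together with the high-probability Lipschitz-in-space bound $|X_i^{N,1}(\boldsymbol{\lambda}) - X_j^{N,1}(\boldsymbol{\lambda})| \leq C\sqrt{N}\log N\,|\lambda_i - \lambda_j|$. Replacing $X_1^N$ by its first-order approximation $\hat{X}^N = Id + \frac{1}{N}X_1^{N,1}$ costs only $\|\nabla f\|_\infty (\log N)^2 / N^{3/2}$. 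The key identification is that $\hat{X}_i^N(\boldsymbol{\lambda}) - \lambda_i$ is, up to a stochastic perturbation $\frac{1}{N}\int_0^1 \int \mathbf{z}_t(\lambda_i,y)\,dM_N(y)\,dt$ controlled by concentration, equal to $\Phi(\lambda_i) - \lambda_i$, where $\Phi$ is the time-1 flow of the ODE $\dot{\phi}_t(x) = \mathbf{y}_{1,t}(\phi_t(x))$; by construction of $\mathbf{y}_{1,t}$ this $\Phi$ pushes $\mu_G$ onto $\mu_W$.

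For the bulk statement, since $f$ is supported in $[-M,M]^m$ the gaps $\lambda_{i+k} - \lambda_{i+j}$ live in a window of width $O(M/N)$, so a Taylor expansion of $\Phi$ at $\lambda_i$ yields $\hat{X}_{i+k}^N - \hat{X}_{i+j}^N = \Phi'(\lambda_i)(\lambda_{i+k} - \lambda_{i+j}) + O(M^2/N^2)$, producing the $M^2/N$ term after rescaling by $N$; the Lipschitz-in-space bound controls the stochastic part uniformly and yields the $M\log N / N^{1/2}$ and $\sqrt{m}(\log N)^2/N^{1/2}$ contributions. At the edge, the same argument run at the $N^{2/3}$ scale, combined with rigidity $|\lambda_i - \alpha_-| = O(N^{-2/3})$ for bounded $i$, produces the $\log N / N^{1/3}$ and $M^2/N^{4/3}$ terms. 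The main obstacle is the quantitative replacement of $\hat{X}^N$ by the deterministic one-variable map $\Phi$ at the sharp local scale: one must show the stochastic piece contributes $o(1)$ after the $N$ or $N^{2/3}$ rescaling and that $X^{N,1}$ is smooth enough in $\boldsymbol{\lambda}$ to allow the Taylor expansion, both of which rest on the uniform bound $\|\int \partial_1\mathbf{z}_t(\cdot,y)\,dM_N(y)\|_\infty \leq C\sqrt{N}\log N$ with overwhelming probability, together with a Gr\"onwall estimate on the flow.
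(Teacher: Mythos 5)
Note first that the paper does not reprove this proposition: it is quoted verbatim as Theorem 1.5 of \cite{BFG}, and the underlying argument there is indeed the transport strategy you outline, of which Section 2 of the present paper is the multi-cut adaptation. So your overall plan is the right one in spirit. However, as written your construction has a genuine gap: your ansatz for the vector field contains only the $O(1/N)$ terms $\frac{1}{N}\mathbf{y}_{1,t}(\lambda_i)+\frac{1}{N}\int \mathbf{z}_t(\lambda_i,y)\,dM_N(y)$, which is the ansatz of the present paper, and it is admissible here only because the interpolation $T_t$ of (\ref{potentiel}) is engineered so that the equilibrium measure $\mu_{T_t}^{\boldsymbol\epsilon}=\mu_V^{\boldsymbol\epsilon}$ does not move with $t$. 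When you interpolate $V_t=(1-t)G+tW$ directly, the equilibrium measures $\mu_{V_t}$ (and in particular their supports and edges) vary with $t$, and the approximate Monge--Amp\`ere equation then forces a \emph{leading-order}, $O(1)$ term $\mathbf{y}_{0,t}$ in the field: after recentering $L_N$ by $\mu_{V_t}$, the $\partial_t V_t=W-G$ contribution produces a term of size $N\int (W-G)\,dM_N$ which can only be cancelled by $N\int \Xi\mathbf{y}_{0,t}\,dM_N$ with $\mathbf{y}_{0,t}$ of order one, i.e.\ by solving the leading-order equation (equivalently the continuity equation $\partial_t\mu_{V_t}+\partial_x(\mathbf{y}_{0,t}\,\mu_{V_t})=0$). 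With your purely $1/N$-sized ansatz the remainder $\mathcal{R}$ is of order $N$, not $(\log N)^3/N$, and the stability lemma gives nothing.

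The same omission makes your ``key identification'' inconsistent: the flow of a field of size $O(1/N)$ is $Id+O(1/N)$ and cannot push $\mathbb{P}_G^N$ approximately onto $\mathbb{P}_W^N$, whose equilibrium support $[\alpha_-,\alpha_+]$ differs in general from $[-2,2]$; yet you simultaneously ask $\Phi$, defined as the time-one flow of $\mathbf{y}_{1,t}$, to be an order-one transport of $\mu_G$ onto $\mu_W$. In \cite{BFG} the correct decomposition is $X_t^N=X_{0,t}+\frac{1}{N}X_t^{N,1}+\frac{1}{N^2}X_t^{N,2}$, where $X_{0,t}$ is the (entrywise) flow of the macroscopic field $\mathbf{y}_{0,t}$ obtained by inverting $\Xi$ at leading order, and $\Phi=X_{0,1}$; it is the Taylor expansion of this macroscopic flow over windows of size $M/N$ (resp.\ $M/N^{2/3}$ near the edge, combined with rigidity) that produces the factors $\Phi'(\lambda_i)$ and $\Phi'(-2)$ and the error terms $M^2/N$, $M^2/N^{4/3}$ in the statement. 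Your subsequent estimates (the $L^{2k}$ bounds on $X^{N,1}$, $X^{N,2}$, the high-probability Lipschitz bound, and the replacement of $X_1^N$ by its truncation) are the right ingredients, but they must be run around the moving base point $X_{0,t}$ rather than around the identity; once the leading-order term is restored the argument goes through as in \cite{BFG}.
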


\begin{proof}[Proof of Corollary \ref{corfixed}]

Noticing that $d{\mathbb{P}}_{T_1,B}^{N,\boldsymbol\epsilon}$ is a product measure we can write

\begin{equation*}
\begin{split}
\int & \prod_{0 \leq h \leq g} f_h\big(N (\lambda_{h,i_h+1}-\lambda_{h,i_h}),\cdots,N (\lambda_{h,i_h+m} - \lambda_{h,i_h})\big)d{\mathbb{P}}_{T_1,B}^{N,\boldsymbol\epsilon} 
=   \frac{1}{Z_{T_1,B}^{N,\boldsymbol\epsilon}} \int \prod_{0 \leq h \leq g} \prod_{1 \leq i \leq N_h} \mathbf{1}_{B_h}(\lambda_{h,i}) d\lambda_{h,i} \\
& \bigg[ f_h\big(N (\lambda_{h,i_h+1}-\lambda_{h,i_h}),\cdots,N (\lambda_{h,i_h+m} - \lambda_{h,i_h})\big) \prod_{1\leq i < j \leq N_h} \lvert \lambda_{h,i} - \lambda_{h,j} \rvert^\beta  \exp \Big( -  N \sum_{1\leq i \leq N_h} \tilde{V}^{\boldsymbol\epsilon}(\lambda_{h,i}) \Big) \bigg] \\ 
= & \prod_{0 \leq h \leq g} \int   f_h\big({N} (\lambda_{h,i_h+1}-\lambda_{h,i_h}),\cdots,N (\lambda_{h,i_h+m} - \lambda_{h,i_h})\big)d{\mathbb{P}}_{{\tilde{V}^{\boldsymbol\epsilon}}/{\epsilon_h},B_h}^{N_h} 
\end{split}
\end{equation*}

\noindent We notice using (\ref{caracterisation}) that 
\begin{equation*}
\mu_{{\tilde{V}^{\boldsymbol\epsilon}}/{\epsilon_h},B_h}= \mu_{V}^{\boldsymbol\epsilon,h}.
\end{equation*}

\noindent We conclude using  Proposition \ref{thmonecut}.
\end{proof}

\section{Universality in the initial model}
\noindent To derive universality in the initial model, we expand the expectation of the quantity we want to compute in terms of  the filling fractions, and we  make use of  Corollary \ref{corfixed}.\\

\noindent First, we notice  that  for all $0\leq h \leq g$ the map $\Phi^{\boldsymbol\epsilon,h }$ is smooth in $\boldsymbol\epsilon \in \tilde{\mathcal{E}}$ and we have a bound 
\begin{equation}\label{etoile}
(\Phi^{\boldsymbol\epsilon,h })'(\lambda_{h,i}) = (\Phi^{\boldsymbol\epsilon_\star,h })'(\lambda_{h,i}) + O(|\boldsymbol\epsilon - \boldsymbol\epsilon_\star |)  \ \ \ \ \ \ \ \ \ uniformely \ in \ \lambda_{h,i}\in B
\end{equation}

\noindent Indeed, it is shown in \cite{BFG} (4.1) that our transport map $\Phi^{\boldsymbol\epsilon,h }$ is equal to $X_1^{\boldsymbol\epsilon}$ where $X_t^{\boldsymbol\epsilon}$ solves the ordinary diffential equation 

\begin{equation*}
\dot{X}_t^{\boldsymbol\epsilon} = {y}_t^{\boldsymbol\epsilon} ({X}_t^{\boldsymbol\epsilon}) \ \ , \ \ {X}_0^{\boldsymbol\epsilon} = Id
\end{equation*}

\noindent and  ${y}_t^{\boldsymbol\epsilon}$ is given by inverting $\Xi$. By formula (\ref{formuleinverse}) and Lemma \ref{regularity}, we see that ${y}_t^{\boldsymbol\epsilon}$ is regular in $\epsilon$, and from the standard theory of ordinary differential equations, so is $\Phi^{\boldsymbol\epsilon}$. \\

\noindent We will   use  the following result proved in section 8.2, equations (8.18) and (8.19) of \cite{BGII}. 

\begin{lem}\label{GaussDiscret}
Along the subsequences such that $\boldsymbol{N_\star}\mod \mathbb{Z}^{g+1} \longrightarrow \kappa $ where $\kappa \in [0;1[^{g+1}$ and under ${\mathbb{P}}_{V,\mathcal{B}}^{N}$,  the vector $  \lfloor \boldsymbol{N_\star} \rfloor - N(\boldsymbol{\lambda}) $ converges towards a random discrete Gaussian vector $\Delta_{h, \kappa}$. In particular 
\begin{equation*}
\mathbb{P}_{V,{B}}^N \big( | \mathrm{N}(\boldsymbol\lambda) - \lfloor \boldsymbol{N_\star} \rfloor |  \geq K )= O \Big( \exp ( - K^2) \Big) .
\end{equation*}
\end{lem}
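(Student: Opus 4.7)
The plan is to decompose $\mathbb{P}_{V,B}^{N}$ according to the filling fractions $\mathrm{N}(\boldsymbol\lambda)$ and then run a lattice Laplace argument around $\boldsymbol{N_\star}$. First I would observe that, by symmetry of the unconstrained density,
\begin{equation*}
\mathbb{P}_{V,B}^{N}\bigl(\mathrm{N}(\boldsymbol\lambda)=\mathbf{N}\bigr) \;=\; \binom{N}{N_{0},\ldots,N_{g}}\,\frac{Z^{N,\boldsymbol\epsilon}}{Z_{V,B}^{N}},
\end{equation*}
where $\boldsymbol\epsilon=\mathbf{N}/N$ and $Z^{N,\boldsymbol\epsilon}$ is the constrained partition function of Section~2 associated to the pair potential $T_{0}(x,y)=-(V(x)+V(y))$. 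The denominator is a constant, and the multinomial will be handled by Stirling's formula.

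The crucial analytic input to invoke is a smooth $1/N$ topological expansion
\begin{equation*}
\log Z^{N,\boldsymbol\epsilon} \;=\; N^{2}F_{0}(\boldsymbol\epsilon) + N\,F_{1}(\boldsymbol\epsilon) + F_{2}(\boldsymbol\epsilon) + o(1),
\end{equation*}
valid uniformly for $\boldsymbol\epsilon\in\tilde{\mathcal{E}}$, with coefficients smooth in $\boldsymbol\epsilon$. This can be extracted from the same loop equations used in Lemma \ref{mnexp}, combined with the concentration estimate of Proposition \ref{concentration} and the regularity of $\mu_{V}^{\boldsymbol\epsilon}$ granted by Lemma \ref{regularity}; it is the content of Section~8 of \cite{BGII}. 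Up to an additive constant, $F_{0}(\boldsymbol\epsilon)$ agrees with the constrained energy \eqref{energyII}, so the uniqueness of the unconstrained minimizer $\mu_{V}$ (Hypothesis \ref{hypo}) forces $\boldsymbol\epsilon_{\star}$ to be the unique minimizer of $F_{0}$ on the simplex $\{\sum\epsilon_{h}=1\}$, and the Hessian $H$ of $F_{0}$ at $\boldsymbol\epsilon_{\star}$, restricted to the tangent subspace $\{\sum\delta\epsilon_{h}=0\}$, is strictly positive definite -- this non-degeneracy is equivalent to the smooth dependence of the endpoints $\alpha^{\boldsymbol\epsilon}_{h,\pm}$ from Lemma \ref{regularity}.

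Setting $\mathbf{k}=\mathbf{N}-\lfloor\boldsymbol{N_\star}\rfloor$ and Taylor-expanding at $\boldsymbol\epsilon_{\star}$, the term $N^{2}F_{0}$ contributes $-\tfrac{1}{2}(\mathbf{k}-\kappa_{N})^{T}H(\mathbf{k}-\kappa_{N})$ at order one (since $\boldsymbol\epsilon_{\star}$ is a critical point on the simplex), while $NF_{1}$, $F_{2}$, and Stirling applied to the multinomial each produce a linear functional of $\mathbf{k}-\kappa_{N}$ and an additive constant, with quadratic remainders of size $O(|\mathbf{k}|^{2}/N)$. Assembling these pieces yields
\begin{equation*}
\log\mathbb{P}_{V,B}^{N}\bigl(\mathrm{N}(\boldsymbol\lambda)=\lfloor\boldsymbol{N_\star}\rfloor+\mathbf{k}\bigr) \;=\; -\tfrac{1}{2}(\mathbf{k}-\kappa_{N})^{T}H(\mathbf{k}-\kappa_{N}) + L\cdot(\mathbf{k}-\kappa_{N}) + C_{N} + o(1),
\end{equation*}
uniformly on bounded sets of $\mathbf{k}\in\mathbb{Z}^{g+1}\cap\{\sum k_{h}=0\}$, where $\kappa_{N}=\boldsymbol{N_\star}-\lfloor\boldsymbol{N_\star}\rfloor\to\kappa$, $L$ is a fixed vector built from $\nabla F_{1}(\boldsymbol\epsilon_{\star})$ and $-\log\boldsymbol\epsilon_{\star}$, and $C_{N}$ is a normalization. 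This identifies the limit law of $\mathrm{N}(\boldsymbol\lambda)-\lfloor\boldsymbol{N_\star}\rfloor$ as a discrete Gaussian vector $\Delta_{h,\kappa}$ with covariance $H^{-1}$ and $\kappa$-dependent mean, the prefactor $e^{C_{N}}$ converging to the corresponding theta-function normalization.

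The tail bound is a byproduct of the same positivity: for $\boldsymbol\epsilon\in\tilde{\mathcal{E}}$ with $|\mathbf{N}-\boldsymbol{N_\star}|\ge K$ one has $N^{2}(F_{0}(\boldsymbol\epsilon)-F_{0}(\boldsymbol\epsilon_{\star}))\ge cK^{2}$, whereas for $\boldsymbol\epsilon$ outside $\tilde{\mathcal{E}}$ the large deviations estimate behind Lemma \ref{GDP} yields an $e^{-cN}$ factor; summing the resulting weights over lattice points outside the ball of radius $K$ gives the announced $O(e^{-K^{2}})$ rate. The main obstacle is unquestionably the smooth $1/N$ expansion of $\log Z^{N,\boldsymbol\epsilon}$ uniformly in $\boldsymbol\epsilon\in\tilde{\mathcal{E}}$, which requires substantial loop-equation machinery; once that is in hand, everything above is a clean lattice Laplace argument.
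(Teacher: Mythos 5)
Your proposal is correct in outline, but you should know that the paper does not actually prove this lemma: it quotes it directly from Section~8.2 of \cite{BGII} (equations (8.18) and (8.19)), the only added remark being that replacing $\boldsymbol{N_\star}$ by $\lfloor \boldsymbol{N_\star}\rfloor$ is harmless along subsequences where $\boldsymbol{N_\star}-\lfloor\boldsymbol{N_\star}\rfloor\to\kappa$. What you wrote is essentially a reconstruction of the proof sitting inside that citation: the decomposition $\mathbb{P}_{V,B}^N(\mathrm{N}(\boldsymbol\lambda)=\mathbf{N})=\binom{N}{N_0,\dots,N_g}\,Z_{V,B}^{N,\boldsymbol\epsilon}/Z_{V,B}^{N}$ (also used by the paper in Section~4), the uniform expansion of $\log Z_{V,B}^{N,\boldsymbol\epsilon}$ with smooth coefficients in $\boldsymbol\epsilon$, and the lattice Laplace argument yielding a discrete Gaussian governed by the Hessian of $F_0$ at $\boldsymbol\epsilon_\star$ together with the quadratic tail bound. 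So in substance both routes rest on the same external input, Section~8 of \cite{BGII}: yours makes the mechanism explicit, the paper's is just a pointer to where the statement is established. Two of your auxiliary claims should be softened, though. First, the $o(1)$-precise free-energy expansion cannot really be ``extracted from the same loop equations used in Lemma~\ref{mnexp}'': those give CLT-type bounds on linear statistics, whereas the expansion of $\log Z_{V,B}^{N,\boldsymbol\epsilon}$ uniformly in $\boldsymbol\epsilon\in\tilde{\mathcal{E}}$ requires the full all-order correlator analysis of \cite{BGK}, \cite{BGII} — you correctly flag this as the main obstacle, but it is imported, not derived here. Second, strict positive-definiteness of the Hessian of $F_0$ on $\{\sum_h\delta\epsilon_h=0\}$ is not ``equivalent to'' the smoothness of $\boldsymbol\epsilon\mapsto\alpha^{\boldsymbol\epsilon}_{h,\pm}$ from Lemma~\ref{regularity}; convexity of $\boldsymbol\epsilon\mapsto\inf_{\mathcal{M}_1^{\boldsymbol\epsilon}}E$ only gives semi-definiteness, and the strict positivity (needed both for the theta-function normalization and for the tail bound) is a separate nontrivial result of \cite{BGII}. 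Finally, the macroscopic-deviation regime is controlled by speed-$N^2$ large deviations of the empirical measure (or a concentration bound in the spirit of Proposition~\ref{concentration}) rather than by Lemma~\ref{GDP}, and your argument gives $O(e^{-cK^2})$ rather than literally $O(e^{-K^2})$ — harmless for the way the bound is used, but worth stating. None of this affects the overall soundness, given that you, like the paper, ultimately lean on \cite{BGII}.
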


\noindent Note that the limit is not necessarily centered, and although the result is proved for $\boldsymbol{N_\star}  - N(\boldsymbol{\lambda})$, it obviously also holds for  $\lfloor \boldsymbol{N_\star} \rfloor - N(\boldsymbol{\lambda})$ since we are only considering subsequences such that $ \boldsymbol{N_\star}  - \lfloor \boldsymbol{N_\star} \rfloor  \longrightarrow \kappa$.
\noindent We will also need the following result, which can be proved using the previous result or  Lemma  \ref{controlefourier}

\begin{equation}\label{ecarteps}
 \sum_{\substack{\mathbf{N}=(N_0,\cdots,N_g) }} \frac{N!}{\prod N_h!} \ \  \frac{Z_{V,B}^{N,\boldsymbol\epsilon}}{Z_{V,B}^{N}} |\boldsymbol\epsilon - \boldsymbol\epsilon_\star | = \mathbb{E}_{V,B}^{N} \Big( \sum_{0 \leq h \leq g} | L_N(B_h) - \mu_{V}(B_h) |\Big) \leq C \frac{\log N}{N} .
\end{equation}

\noindent We now provide a proof of  Theorem \ref{theoreme}.  Let $f$ be a function of compact support and $i$ such as in the hypothesis of the theorem. Using Corollary \ref{corfixed} we have

\begin{equation*}
\begin{split}
& \ \ \ \int    f\big(N \rho_V(E^{V,N}_{i}) (\lambda_{i+1}-\lambda_{i})\big)d\tilde{\mathbb{P}}_{V,B}^{N} \\
= & \sum_{\mathbf{N}=(N_0,\cdots,N_g)} \int   f\big(N \rho_V(E^{V,N}_{i})(\lambda_{i+1}-\lambda_{i})\big) \mathbf{1}_{\mathrm{N}(\boldsymbol\lambda) = \mathbf{N}} \  d\tilde{\mathbb{P}}_{V,B}^{N} \\
= & \sum_{\mathbf{N}=(N_0,\cdots,N_g)} \frac{N!}{\prod N_h!} \ \  \frac{Z_{V,B}^{N,\boldsymbol\epsilon}}{Z_{V,B}^{N}} \int   f \big(N \rho_V(E^{V,N}_{i}) (\lambda_{h,i+1[h,\mathbf{N}]}-\lambda_{h,i[h,\mathbf{N}]})\big)  d\tilde{\mathbb{P}}_{V,B}^{N,\boldsymbol\epsilon} \\
= & \sum_{\substack{\mathbf{N}=(N_0,\cdots,N_g) \\ | \mathrm{N}(\boldsymbol\lambda) - \lfloor \boldsymbol{N_\star} \rfloor | \leq K }} \frac{N!}{\prod N_h!} \ \  \frac{Z_{V,B}^{N,\boldsymbol\epsilon}}{Z_{V,B}^{N}} \int   f\big(N \rho_V(E^{V,N}_{i}) (\lambda_{h,i+1[h,\mathbf{N}]}-\lambda_{h,i[h,\mathbf{N}]})\big) d\tilde{\mathbb{P}}_{V,B}^{N,\boldsymbol\epsilon} \\[0.5em]
&  + O\Big(\norm{f}_{\infty} \exp ( - K^2)\Big) \\
= &  \sum_{\substack{\mathbf{N}=(N_0,\cdots,N_g) \\ | \mathrm{N}(\boldsymbol\lambda) - \lfloor \boldsymbol{N_\star} \rfloor | \leq K }} \frac{N!}{\prod N_h!} \ \  \frac{Z_{V,B}^{N,\boldsymbol\epsilon}}{Z_{V,B}^{N}}  \int   f \big(N (\Phi^{\boldsymbol\epsilon,h })' (\lambda_{i[h,\mathbf{N}]})  \rho_V(E^{V,N}_{i}) (\lambda_{i+1[h,\mathbf{N}]}-\lambda_{i[h,\mathbf{N}]})\big) d\tilde{\mathbb{P}}_{G}^{N_h}  \\[0.5em]
& +O\Big( (\exp ( - K^2) + \frac{(\log N)^3}{N}) \norm{f}_\infty + (\sqrt{m}\frac{(\log N)^2}{N^{1/2}} + M \frac{(\log N)}{N^{1/2}} +\frac{M^2}{N}) \norm{\nabla f}_\infty \Big) .
\end{split}
\end{equation*}

\noindent If we manage to replace the term $N (\Phi^{\boldsymbol\epsilon,h })' (\lambda_{i[h,\mathbf{N}]})  \rho_V(E^{V,N}_{i})$ by  $N_h \ \rho_G(E^{G,N_h}_{i[h,\mathbf{N}]})$ then, using the convergence (\ref{convergencegap}) we can conclude.\\

\noindent By (\ref{etoile}) we can replace $(\Phi^{\boldsymbol\epsilon,h })' (\lambda_{i[h,\mathbf{N}]}) $ by $(\Phi^{\boldsymbol\epsilon_\star,h })' (\lambda_{i[h,\mathbf{N}]}) $ in the last equation and obtain an error of order $K/N$. Now, using that 
 $\Phi^{\boldsymbol\epsilon_\star,h }$ is  a transport from $\mu_G$ to $\mu_V^{\boldsymbol\epsilon_\star,h }$ we see that
 
 \begin{equation*}
 \begin{split}
 (\Phi^{\boldsymbol\epsilon_\star,h })' (\lambda_{i[h,\mathbf{N}]}) &= \frac{\rho_G(\lambda_{i[h,\mathbf{N}]})}{\rho_V^{\boldsymbol\epsilon_\star,h} (\Phi^{\boldsymbol\epsilon_\star,h }(\lambda_{i[h,\mathbf{N}]}))}  \ , \\
  \int_{-\infty}^{\Phi^{\boldsymbol\epsilon_\star,h }(E^{G,N_h}_{i[h,\mathbf{N}]})} \rho_V^{\boldsymbol\epsilon_\star,h} (x) dx &= \int_{-\infty}^{E^{V,N}_i} \rho_V^{\boldsymbol\epsilon_\star,h} (x) dx + O(K/N) .
 \end{split}
\end{equation*}  

\noindent Thus $\Phi^{\boldsymbol\epsilon_\star,h }(E^{G,N_h}_{i[h,\mathbf{N}]}) =E^{V,N}_i + O(K/N)$ and using  $\rho_V = \boldsymbol\epsilon_{\star,h}\  \rho_V^{\boldsymbol\epsilon_\star,h} $ on $A_h$ we see that
 \begin{equation*}
N (\Phi^{\boldsymbol\epsilon_\star,h })' (\lambda_{i[h,\mathbf{N}]})  \rho_V(E^{V,N}_{i}) = N_{\star,h} \ \rho_G(\lambda_{i[h,\mathbf{N}]}) \ \frac{\rho_V^{\boldsymbol\epsilon_\star,h}(E^{V,N}_{i}) }{\rho_V^{\boldsymbol\epsilon_\star,h} (\Phi^{\boldsymbol\epsilon_\star,h }(\lambda_{i[h,\mathbf{N}]}))}  .
\end{equation*}  

\noindent We can replace $\lambda_{i[h,\mathbf{N}]}$ by  $E^{G,N_h}_{i[h,\mathbf{N}]}$ in the right hand side with an  error term  $o(N)$ with high probability under ${\mathbb{P}}_{G}^{N_h}$ using a very rough rigidity  estimate  that can be proved for instance using Proposition \ref{concentration}. As   $(\Phi^{\boldsymbol\epsilon_\star,h })'$ is bounded by below  and  $f$ is compact we notice that $N (\lambda_{i+1[h,\mathbf{N}]}-\lambda_{i[h,\mathbf{N}]})$ is of order $1$ and we can conclude.\\

\noindent  We can now proceed with the proof of Theorem \ref{theoremedge}. To simplify the notations, we will do the proof when $m=1$ but the proof for general $m$ is identical.

\begin{equation*}
\begin{split}
& \ \ \ \int  \prod_{h=0}^g f_h\big(N^{2/3} (\lambda_{h,1}-\alpha_{0,-})\big)d\tilde{\mathbb{P}}_{V,B}^{N} \\
= & \sum_{\mathbf{N}=(N_0,\cdots,N_g)} \int  \prod_{h=0}^g  f_h\big(N^{2/3} (\lambda_{h,1}-\alpha_{0,-})\big) \mathbf{1}_{\mathrm{N}(\boldsymbol\lambda) = \mathbf{N}} \  d\tilde{\mathbb{P}}_{V,B}^{N} \\
= & \sum_{\mathbf{N}=(N_0,\cdots,N_g)} \frac{N!}{\prod N_h!} \ \  \frac{Z_{V,B}^{N,\boldsymbol\epsilon}}{Z_{V,B}^{N}} \int \prod_{h=0}^g  f_h\big(N^{2/3} (\lambda_{h,1}-\alpha_{0,-})\big)  d\tilde{\mathbb{P}}_{V,B}^{N,\boldsymbol\epsilon} \\
= & \sum_{\mathbf{N}=(N_0,\cdots,N_g)} \frac{N!}{\prod N_h!} \ \  \frac{Z_{V,B}^{N,\boldsymbol\epsilon}}{Z_{V,B}^{N}} \int \prod_{h=0}^g  f_h\big(N^{2/3} (\lambda_{h,1}-\alpha^{\boldsymbol\epsilon}_{0,-})\big)  d\tilde{\mathbb{P}}_{V,B}^{N,\boldsymbol\epsilon} \\
&  + O\Big(\sum_{\mathbf{N}=(N_0,\cdots,N_g)} \frac{N!}{\prod N_h!} \ \  \frac{Z_{V,B}^{N,\boldsymbol\epsilon}}{Z_{V,B}^{N}}    \norm{\nabla f}_\infty N^{2/3} |\boldsymbol\epsilon - \boldsymbol\epsilon_\star | \Big) \\
= & \sum_{\substack{\mathbf{N}=(N_0,\cdots,N_g) \\ | \mathrm{N}(\boldsymbol\lambda) - \lfloor \boldsymbol{N_\star} \rfloor | \leq K}} \frac{N!}{\prod N_h!} \ \  \frac{Z_{V,B}^{N,\boldsymbol\epsilon}}{Z_{V,B}^{N}} \int  \prod_{h=0}^g  f_h\big(N^{2/3} (\lambda_{h,1}-\alpha^{\boldsymbol\epsilon}_{h,-})\big) d\tilde{\mathbb{P}}_{V,B}^{N,\boldsymbol\epsilon} \\[0.5em]
&  + O\Big(\frac{\log N}{N^{1/3}}  \norm{\nabla f}_\infty  + \norm{f}_{\infty}  \exp( - K^2)\Big) \\
= &  \sum_{\substack{\mathbf{N}=(N_0,\cdots,N_g) \\ | \mathrm{N}(\boldsymbol\lambda) - \lfloor \boldsymbol{N_\star} \rfloor | \leq K}} \frac{N!}{\prod N_h!} \ \  \frac{Z_{V,B}^{N,\boldsymbol\epsilon}}{Z_{V,B}^{N}} \prod_{h=0}^g  \int   f_h\big(N^{2/3} (\Phi^{\boldsymbol\epsilon,h })'(-2) (\lambda_{1} + 2 )\big) d\tilde{\mathbb{P}}_{G}^{N_h}  \\[0.5em]
& +O\Big( (\exp( - K^2) +  \frac{(\log N)^3}{N}) \norm{f}_\infty + (\sqrt{m}\frac{(\log N)^2}{N^{5/6}}  +\frac{\log N}{N^{1/3}} +  \frac{M^2}{N^{4/3}}) \norm{\nabla f}_\infty \Big) .
\end{split}
\end{equation*}

\noindent Using the fact that $(\Phi^{\boldsymbol\epsilon,h })'$ is bounded by below on $B$ and that  $f_h$ is supported in $[-M ; M]$ we obtain that $|\lambda_{1} + 2 |$ remains bounded by $\frac{C M}{N^{2/3}}$. Using (\ref{etoile}) we get 

\begin{equation*}
\begin{split}
   f_h \big(N^{2/3} (\Phi^{\boldsymbol\epsilon,h })'(-2) (\lambda_{1} + 2)\big) = &  f_h\big(N^{2/3} (\Phi^{\boldsymbol\epsilon_\star,0 })'(\alpha_{G,-}) (\lambda_{1}-\alpha_{G,-})\big)  \\
  + & O( M \norm{\nabla f}_\infty |\boldsymbol\epsilon - \boldsymbol\epsilon_\star |) .
 \end{split}
\end{equation*}

\noindent This equation, along with (\ref{ecarteps}),  shows that

\begin{equation*}
\begin{split}
& \ \ \ \int    \prod_{h=0}^g f_h\big(N^{2/3} (\lambda_{h,1}-\alpha_{h,-})\big)d\tilde{\mathbb{P}}_{V,B}^{N} \\
= &  \sum_{\substack{\mathbf{N}=(N_0,\cdots,N_g) \\ | \mathrm{N}(\boldsymbol\lambda) - \lfloor \boldsymbol{N_\star} \rfloor | \leq K}} \frac{N!}{\prod N_h!} \ \  \frac{Z_{V,B}^{N,\boldsymbol\epsilon}}{Z_{V,B}^{N}}  \prod_{h=0}^g  \int   f_h \big(N^{2/3} (\Phi^{\boldsymbol\epsilon,h })'(-2) (\lambda_{1} + 2 )\big) d\tilde{\mathbb{P}}_{G}^{N_h}  \\
  + & O\Big(  (\exp( - K^2) +  \frac{(\log N)^3}{N}) \norm{f}_\infty + (\sqrt{m}\frac{(\log N)^2}{N^{5/6}}  +\frac{\log N}{N^{1/3}} +  \frac{M^2}{N^{4/3}}) \norm{\nabla f}_\infty \Big) .
\end{split}
\end{equation*}

\noindent As Theorem 1.1 of  \cite{RRV} ensures the convergence  of the expectation,  we can conclude.\\

\noindent We now come to the proof of Theorem \ref{theoremedge2}. Let $0\leq h \leq g$, $i = [\lfloor \boldsymbol{N_\star} \rfloor]_{h-1} +1$ and $\Delta_{h}(\boldsymbol{\lambda}) = [\lfloor \boldsymbol{N_\star} \rfloor]_{h-1} - [N(\boldsymbol{\lambda})]_{h-1}$. As  before we obtain 

\begin{equation*}
\begin{split}
 \ \ \ \int  \ f\big(N^{2/3} (\lambda_{i}- \xi_h)\big)d\tilde{\mathbb{P}}_{V,B}^{N} &=  \sum_{\substack{\mathbf{N}=(N_0,\cdots,N_g) \\ \Delta_{h}(\boldsymbol{\lambda}) \geq 0}} \frac{N!}{\prod N_h!} \ \  \frac{Z_{V,B}^{N,\boldsymbol\epsilon}}{Z_{V,B}^{N}} \int  f\big(N^{2/3} (\lambda_{h, i[h,\mathbf{N}]} -\alpha_{h,-})\big)  d\tilde{\mathbb{P}}_{V,B}^{N,\boldsymbol\epsilon}  \\ 
 + &\sum_{\substack{\mathbf{N}=(N_0,\cdots,N_g) \\ \Delta_{h}(\boldsymbol{\lambda}) < 0}} \frac{N!}{\prod N_h!} \ \  \frac{Z_{V,B}^{N,\boldsymbol\epsilon}}{Z_{V,B}^{N}} \int   f\big(N^{2/3} (\lambda_{h - 1 , i[h-1 ,\mathbf{N}]} -\alpha_{h-1,+})\big)  d\tilde{\mathbb{P}}_{V,B}^{N,\boldsymbol\epsilon} .
\end{split}
\end{equation*}

\noindent We focus on the first term. Applying Corollary \ref{corfixed} we see that this term equals to
\begin{equation*}
\begin{split}
\sum_{\substack{| \mathrm{N}(\boldsymbol\lambda) - \lfloor \boldsymbol{N_\star} \rfloor | \leq K \\ \Delta_{h}(\boldsymbol{\lambda}) \geq 0}} \frac{N!}{\prod N_h!} \ \  \frac{Z_{V,B}^{N,\boldsymbol\epsilon}}{Z_{V,B}^{N}} \int  f\big(N^{2/3}  (\Phi^{\boldsymbol\epsilon,h })'(-2) (\lambda_{i[h,\mathbf{N}]} + 2) \big)  d\tilde{\mathbb{P}}_{G}^{N_h} . 
\end{split}
\end{equation*}

\noindent Noticing that $ i[h,\mathbf{N}] =  [\lfloor \boldsymbol{N_\star} \rfloor]_{h-1} - [\mathbf{N}]_{h-1} + 1$ , we deduce the  theorem from Lemma \ref{GaussDiscret}.

\bibliographystyle{abbrv}
\bibliography{biblio}

\end{document}